\newcommand{\RR}{\mathbf R}
\newcommand{\ZZ}{\mathbf Z}
\newcommand{\QQ}{\mathbf Q}
\newcommand{\QQbar}{\overline{\QQ}}
\newcommand{\kbar}{\overline{k}}
\newcommand{\FF}{\mathbf F}
\newcommand{\FFbar}{\overline{\FF}}
\newcommand{\gl}{\mathfrak{gl}}
\newcommand{\g}{\mathfrak{g}}
\newcommand{\m} {\mathfrak m}
\DeclareFontFamily{OT1}{rsfs}{}
\DeclareFontShape{OT1}{rsfs}{n}{it}{<-> rsfs10}{}
\DeclareMathAlphabet{\mathscr}{OT1}{rsfs}{n}{it}
\renewcommand{\O}{\mathcal{O}}
\newcommand{\on}[1]{\operatorname{#1}}
\newcommand{\Hom}{\on{Hom}}
\newcommand \tensor[1] {\otimes_{#1}}
\renewcommand{\Im}{\on{Im}}
\newcommand{\Gal}{\on{Gal}}
\newcommand{\End}{\on{End}}
\newcommand{\Lie}{\on{Lie}}
\newcommand{\GL}{\on{GL}}
\newcommand{\SL}{\on{SL}}
\newcommand{\into}{\hookrightarrow}
\theoremstyle{plain}
\newtheorem{lem}{Lemma}
\newtheorem{thm}[lem]{Theorem}
\newtheorem{prop}[lem]{Proposition}
\newtheorem{cor}[lem]{Corollary}
\newtheorem{fact}[lem]{Fact}
\theoremstyle{definition}
\newtheorem{defn}[lem]{Definition}
\newtheorem{example}[lem]{Example}
\newtheorem{remark}[lem]{Remark}
\newcommand{\ttm}[4]{\begin{pmatrix}
#1 & #2 \\
#3 & #4
\end{pmatrix}}
\numberwithin{equation}{section}
\numberwithin{lem}{section}
\newcommand{\ad}{\on{ad}}
\newcommand{\GSp}{\on{GSp}}
\newcommand{\GO}{\on{GO}}
\newcommand{\Gm}{\mathbf{G}_m}
\newcommand{\Id}{\on{Id}}
\newcommand{\Fil}{\on{Fil}}
\newcommand{\rhobar}{{\overline{\rho}}}
\newcommand{\adrho}{\on{ad}(\rhobar)}
\newcommand{\adzerorho}{\on{ad}^0(\rhobar)}
\newcommand{\chibar}{\overline{\chi}}
\newcommand{\nubar}{\overline{\nu}}
\newcommand{\Vbar}{\overline{V}}
\newcommand{\Mbar}{\overline{M}}
\newcommand{\onto}{\twoheadrightarrow}
\newcommand{\inverselimit}{\varprojlim}
\newcommand{\directlimit}{\varinjlim}
\newcommand{\Frob}{\on{Frob}}
\newcommand{\res}{\on{res}}
\newcommand{\C}{\mathcal{C}}
\newcommand{\D}{\mathcal{D}}
\newcommand{\Rep}{\on{Rep}}
\newcommand{\MF}{\on{MF}}
\renewcommand{\t}{\mathfrak{t}}
\newcommand{\zfrak}{\mathfrak{z}}
\newcommand{\Ghat}{\widehat{G}}
\newcommand{\Dram}{\D^{\on{ram}}}
\newcommand{\cris}{{\on{cris}}}
\newcommand{\Acris}{A_{\cris}}
\newcommand{\tor}{\on{tor}}
\newcommand{\DFL}{D^{\on{FL}}}
\newcommand{\LFL}{L^{\on{FL}}}
\newcommand{\WFLtau}{\mathcal{W}_{\on{FL},\tau}}
\newcommand{\WFLsigma}{\mathcal{W}_{\on{FL},\sigma \tau}}
\newcommand{\DK}{\mathcal{D}_K}
\newcommand{\blockmatrix}{\ttm}
\newcommand{\gr}{\on{gr}}
\newcommand{\nr}{{\on{nr}}}
\newcommand{\Kt}{K^{\on{t}}}
\begin{document}
\title{Producing Geometric Deformations of Orthogonal and Symplectic Galois Representations}
\author{Jeremy Booher}
\date{\today}
\address{Department of Mathematics\\
  University of Arizona\\
  Tucson, Arizona 85721}
\email{jeremybooher@math.arizona.edu}

\begin{abstract}
 For a representation of the absolute Galois group of the rationals over a finite field of characteristic $p$, we study the existence of a lift to characteristic zero that is geometric in the sense of the Fontaine-Mazur conjecture.  For two-dimensional representations, Ramakrishna proved that under technical assumptions odd representations admit geometric lifts.  We generalize this to higher dimensional orthogonal and symplectic representations.  A key step is generalizing and studying a local deformation condition at $p$ arising from Fontaine-Laffaille theory.
\end{abstract}


\maketitle

\tableofcontents

\section{Introduction}

Before the proof by Khare and Winterberger \cite{kw1} \cite{kw2} that irreducible odd representations
\[
 \rhobar: \Gal(\QQbar/\QQ) \to \GL_2(\FFbar_p)
\]
are modular, the lifting result of \cite{ram02} 
together with the Fontaine-Mazur conjecture provided evidence for Serre's conjecture.  Ramakrishna's result shows that under technical hypotheses all odd residual representations admit lifts to characteristic zero that are geometric in the sense of the Fontaine-Mazur conjecture.  Assuming that conjecture, the resulting lifts would be modular as predicted by Serre's conjecture.  
Generalizations of Serre's conjecture to groups other than $\GL_2$ have been proposed, most recently by Gee, Herzig, and Savitt \cite{ghs}, which naturally leads to the problem of producing geometric lifts of Galois representations for groups other than $\GL_2$.

Let $K$ be a finite extension of $\QQ$ with absolute Galois group $\Gamma_K$.  Suppose $k$ is a finite field of characteristic $p$, $\O$ the ring of integers in a $p$-adic field with residue field $k$, and $G$ is a reductive group defined over $\O$.  For a continuous representation $\rhobar: \Gamma_K \to G(k)$,  in light of these conjectures it is important to study when there exists a continuous representation
 $ \rho : \Gamma_K \to G(\O)$ lifting $\rhobar$ that is geometric (using an inclusion of $G$ into $\GL_N$ to define being geometric).
 
 When $G = \GSp_m$ or $G=\GO_m$, we produce geometric lifts in favorable conditions.  The exact hypotheses needed are somewhat complicated.  We will state a simple version now, and defer a more detailed statement to Theorem~\ref{thm:liftingapplication}.  It is essential that $\rhobar$ is odd (as discussed in Remark~\ref{rmk:oddness}, forcing $K$ to be totally real) and that $\rhobar$ restricted to the decomposition group at $p$ ``looks like the reduction of a crystalline representation with \emph{distinct} Hodge-Tate weights''.  More precisely, we assume $p$ is unramified in $K$ and that at places $v$ above $p$, the representation $\rhobar|_{\Gamma_{K_v}}$ is torsion crystalline with Hodge-Tate weights in an interval of length $\frac{p-2}{2}$, so it is Fontaine-Laffaille.  It is crucial that for each $\ZZ_p$-embedding of $\O_{K_v}$ in $\O_{\overline{K_v}}$, the Fontaine-Laffaille weights for $\rhobar|_{\Gamma_{K_v}}$ with respect to that embedding are pairwise distinct (these notions will be reviewed in \S\ref{sec:fontainelaffaille}).
 
 For Ramakrishna's method to apply, it is also essential that the image of $\rhobar$ is ``large'': here we use that $G'(k) \subset \rhobar(\Gamma_K)$ where $G'$ is the derived group.  Ramakrishna's method requires certain technical conditions which follow from this assumption on the image provided that $p > \max(17,2(m-1))$ (this restriction on $p$ is not optimized: see Remark~\ref{rmk:optimalp}).  
Let $\mu : G \to \Gm$ be the similitude character, and define $\nubar = \mu \circ \rhobar : \Gamma_K \to k^\times$.  Suppose there is a lift $\nu : \Gamma_K \to W(k)^\times$ that is Fontaine-Laffaille at all places above $p$.  

\begin{thm} \label{thm:maintheorem}
Let $G = \GSp_m$ or $G=\GO_m$ and let $\rhobar: \Gamma_K \to G(k)$ be an odd representation (which forces $K$ to be totally real and that $m \not \equiv 2 \pmod{4}$ when $G = \GO_m$).  Suppose that $p$ is unramified in $K$ and that at places $v$ above $p$, the representation $\rhobar|_{\Gamma_{K_v}}$ is Fontaine-Laffaille with pairwise distinct weights with respect to each $\ZZ_p$-embedding of $\O_{K_v}$ in $\O_{\overline{K_v}}$.  Furthermore, suppose that $G'(k) \subset \rhobar(\Gamma_K)$ and that $p >  \max(17,2(m-1))$.   Fix a lift $\nu : \Gamma_K \to W(k)^\times$ of $\nubar$ that is Fontaine-Laffaille at all places above $p$.  Then there exists a geometric lift $\rho : \Gamma_K \to G(\O)$ of $\rhobar$ where $\O$ is the ring of integers in a finite extension of $\QQ_p$ with residue field containing $k$ such that $\mu \circ \rho = \nu$.  More precisely, $\rho$ is ramified at finitely many places of $K$, and for every place $v$ of $K$ above $p$ the representation $\rho|_{\Gamma_{K_v}}$ is Fontaine-Laffaille and hence crystalline.  
\end{thm}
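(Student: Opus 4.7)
The plan is to apply Ramakrishna's deformation-theoretic strategy \cite{ram02} in the setting of $G = \GSp_m$ or $\GO_m$: set up a global deformation problem for $\rhobar$ with fixed similitude character $\nu$ and carefully chosen smooth local conditions at a finite set of places $S$, then enlarge $S$ by a finite set of auxiliary Ramakrishna primes to kill all obstructions to lifting. Once the dual Selmer group vanishes, the deformation ring will be a power series ring over $W(k)$, yielding a geometric lift by passing to any characteristic-zero point.

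First I would specify the local deformation conditions. At places $v \mid p$, I use the Fontaine-Laffaille condition generalized to $G$-valued representations via the setup of \S\ref{sec:fontainelaffaille}; the assumption of pairwise distinct Fontaine-Laffaille weights (with respect to every embedding) in an interval of length $\frac{p-2}{2}$ should make this a smooth deformation condition of the expected dimension. At archimedean places the oddness hypothesis pins down complex conjugation in a conjugacy class whose centralizer has the right dimension, giving a smooth local condition, and also forces $K$ to be totally real. At finite ramified places of $\rhobar$ away from $p$, I would use a smooth quotient of the full local deformation functor, noting that the choice is flexible because only the dimension enters the final obstruction count. The Greenberg-Wiles formula then relates Selmer and dual Selmer groups, and the numerology of the local conditions matches so that killing $H^1_{\mathcal{L}^\perp}(\Gamma_{K,S}, \adzerorho(1))$ suffices to produce a lift with $\mu \circ \rho = \nu$.

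To kill the dual Selmer group, I would introduce Ramakrishna primes $v \notin S$ for which $\rhobar(\Frob_v)$ has an eigenvalue ratio equal to $p$ in an appropriate root datum sense for $G$, so that a specific smooth local deformation condition exists whose annihilator in $H^1$ pairs non-trivially against a prescribed class in $H^1_{\mathcal{L}^\perp}$. The largeness hypothesis $\DG(k) \subset \rhobar(\Gamma_K)$, together with $p > \max(17, 2(m-1))$, should ensure that $\adzerorho$ and $\adzerorho(1)$ are sufficiently irreducible as $\rhobar(\Gamma_K)$-modules that the Chebotarev density theorem produces a Ramakrishna prime killing any prescribed nonzero class; iterating kills all of $H^1_{\mathcal{L}^\perp}$. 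The resulting lift $\rho$ is ramified only at the finitely many primes in the enlarged $S$, and is Fontaine-Laffaille at places above $p$ by construction, hence crystalline and geometric.

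The main obstacle I expect is two-fold. First, constructing and analyzing the Fontaine-Laffaille local deformation condition at $v \mid p$ for $\GSp_m$ and $\GO_m$ requires carefully tracking the compatibility between the Fontaine-Laffaille filtration and the symplectic or orthogonal pairing, and computing the dimension of the tangent space exactly; this is where the pairwise-distinct-weights hypothesis is essential, since coincident weights would create unwanted endomorphisms and spoil smoothness. Second, guaranteeing a sufficient supply of Ramakrishna primes rests on representation-theoretic lemmas about the action of $\rhobar(\Gamma_K)$ on $\adzerorho$ and its twist, including showing classes are detected by Frobenius eigenvalues; this is where the largeness of the image and the explicit bound on $p$ enter, since they control the relevant $H^0$ and $H^1$ vanishing statements needed to ensure the Chebotarev argument goes through.
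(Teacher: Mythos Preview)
Your outline is essentially the same approach as the paper: choose smooth local conditions (Fontaine--Laffaille at $p$, something at ramified $\ell \neq p$, trivial at $\infty$), verify the tangent space inequality via oddness, then add Ramakrishna primes by Chebotarev to kill the dual Selmer group, with the big-image hypothesis plus the bound on $p$ supplying the group-cohomological input. One point where you are too casual: at finite ramified places $v \nmid p$ the choice of local condition is \emph{not} flexible---you need a liftable condition whose tangent space has dimension exactly $\dim H^0(\Gamma_v,\adzerorho)$ for the inequality \eqref{eq:tspaceinequality} to come out, and producing such a condition for general $G$ is the content of the companion paper \cite{boohermr} on the minimally ramified deformation condition. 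Your proposal treats this as a black box, which is fine for a sketch, but you should not expect to extract it from an arbitrary ``smooth quotient of the full local deformation functor.''
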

 
This provides evidence for generalizations of Serre's conjecture. 
In contrast, when $G = \GL_n$ with $n>2$, the representation $\rhobar$ cannot be odd, and the method does not apply.  In such cases, there is no expectation that such lifts exist.
 
To produce lifts, we use a generalization of Ramakrishna's method also used in \cite{patrikis15}.  
It works by establishing a local-to-global result for lifting Galois representations subject to local constraints (Proposition~\ref{prop:localglobal}).  Let $\rho$ be a lift of $\rhobar$ to $\O/\m^n$ where $\m$ is the maximal ideal of $\O$.  Provided a cohomological obstruction vanishes, it is possible to lift $\rho$ to $\O/\m^{n+1}$ subject to local constraints if (and only if) it is possible to lift $\rho|_{\Gamma_v}$ to $\O/\m^{n+1}$ for all $v$ in a fixed set of places of $K$ containing the places above $p$ and the places where $\rhobar$ is ramified.  Allowing controlled ramification at additional primes kills this obstruction for odd representations.
 
 For this to work, we must pick local deformation conditions above $p$ and at places where $\rhobar$ is ramified which are liftable and have large enough tangent space.  At a prime $\ell \neq p$ where $\rhobar$ is ramified, we use a generalization of the minimally ramified deformation condition defined for $\GL_n$ in \cite[\S 2.4.4]{cht08}.  The correct generalization is not obvious; we define and study this deformation condition in \cite{boohermr}.  At places above $p$, we define a \emph{Fontaine-Laffaille deformation condition} in \S\ref{sec:fldeformation} by using deformations arising from Fontaine-Laffaille modules that carry extra data corresponding to a symmetric or alternating pairing.  

In the remainder of the introduction, we discuss some additional background and give a more detailed overview of the proof.

\subsection{Serre's Conjecture and Geometric Lifts}

We are interested in generalizations of Ramakrishna's lifting result to split reductive groups beyond $\GL_2$, in particular symplectic and orthogonal groups.  
Generalizations of Serre's conjecture have been proposed in this setting, and most of the effort has been to find the correct generalization of the oddness condition and the weight (see for example the discussion in \cite{ghs}, especially \S2.1).  The general flavor of these generalizations is that an odd irreducible Galois representation will be automorphic in the sense that it appears in the cohomology of an $\FFbar_p$-local system on a Shimura variety.  For a general split reductive group, there is no expectation that such representations will lift geometrically to characteristic zero.  For example, as discussed in \cite[\S1]{cht08} the classical Taylor-Wiles method would work only if
\begin{equation} \label{eq:taylorwiles}
 [K:\QQ] \left( \dim G - \dim B \right) = \sum_{v | \infty} \dim H^0(\Gal(\overline{K}_v/K_v) , \adzerorho)
\end{equation}
where $B$ is a Borel subgroup of $G$ and $\adzerorho$ is the adjoint representation of $\Gamma_K$ on the Lie algebra of the derived group of $G$.  Under such a ``numerical coincidence,''  that method gives automorphy lifting theorems and we expect geometric lifts.
 This coincidence cannot hold for $\GL_n$ when $n >2$, but can hold for $G = \GSp_{2n}$ and $G = \GO_m$ when $m \not \equiv 2 \pmod{4}$, and for the group $\mathcal{G}_n$ related to $\GL_n$ considered in \cite{cht08}.  This coincidence is also essential to generalizing Ramakrishna's method.  There are automorphy lifting theorems beyond this setting, but then we don't expect the global deformation ring for $\rhobar$ with bounded ramification and fixed Hodge-Tate weights to always have $\overline{\QQ}_p$-points, and so we cannot produce geometric lifts.

\begin{remark} \label{rmk:oddness}
Following \cite{gross}, we say that $\rhobar : \Gamma_K \to G(k)$ is \emph{odd} if for each archimedean place $v$ and complex conjugation $c_v \in \Gamma_v$ (well-defined up to conjugacy), $\ad(\rhobar(c_v))$ is a split Cartan involution for $\g' := \Lie G^{\on{ad}}$.  Recall that for any involution $\tau$ of $\g'$,
\[
 \dim \left(\mathfrak{\g'}\right)^\tau \geq \dim G - \dim B.
\]
A \emph{split Cartan involution} is an involution for which this is an equality.  If $K$ is totally real and $\rhobar$ is odd, \eqref{eq:taylorwiles} holds.  There are odd representations for symplectic and orthogonal groups, but no odd representations for $\GL_n$ when $n>2$ (for more details, see \cite[\S4.5]{patrikis15}).  These are cases in which we expect geometric lifts, and where Ramakrishna's method generalizes.

There is a less restrictive notion of oddness introduced in \cite[\S6]{bv13}, and the automorphy lifting theorems in \cite{cg12} apply beyond the regime where \eqref{eq:taylorwiles} holds.  
\end{remark}

Ramakrishna developed his lifting technique when $K = \QQ$ and $G = \GL_2$ in \cite{ram99} and \cite{ram02}, and produced geometric lifts.  There have been various reformulations and generalizations that our results build on.  In particular, the formalism developed in \cite{taylor03} (still in the case of $\GL_2$) suggested that it should be possible to generalize the technique to algebraic groups beyonds $\GL_2$.  Attempts were made in \cite{hamblen08} and \cite{mano09} to generalize the technique to $\GL_n$, but ran into the obstruction that there were no odd representations for $n>2$.  The results in \cite{hamblen08} simply assume the existence of liftable local deformation conditions which probably do not exist, but do provide a nice model for generalizing Ramakrishna's method.  In contrast, \cite{mano09} constructs local deformation conditions but does not aim to produce geometric lifts.

For groups beyond $\GL_n$, \cite{cht08} gave a lifting result for a group $\mathcal{G}_n$ which admits odd representations.  By restricting which primes ramify, they can reduce to studying local deformation valued in $\GL_n$.  At primes above $p$, \cite{cht08} studied a deformation condition based on Fontaine-Laffaille theory which is generalized in \S\ref{sec:fldeformation}.  The idea of doing so goes back to \cite{ram93}.  (They also discussed a deformation condition based on the notion of ordinary representations which is not used in their lifting result.)  At primes not above $p$ but where $\rhobar$ is ramified, they defined a \emph{minimally ramified} deformation condition, which is generalized in \cite{boohermr}.  

Building on this, Patrikis' unpublished undergraduate thesis \cite{patrikis06} explored Ramakrishna's method for symplectic groups.  In particular, it generalized Ramakrishna's method to the group $\GSp_n$, and generalized the Fontaine-Laffaille deformation condition at $p$.  It did not 
generalize the minimally ramified deformation condition, so can only be applied to residual representations $\Gamma_\QQ \to \GSp_{n}(k)$ which are unramified away from $p$, a stringent condition.  Our results at $p$ in \S\ref{sec:fldeformation} are a generalization of Patrikis' study of the Fontaine-Laffaille deformation condition.

More recently, Patrikis used Ramakrishna's method to produce geometric representations with exceptional monodromy \cite{patrikis15}.  This involves generalizing Ramakrishna's method to any connected reductive group $G$ and then modifying the technique to deform a representation valued in the principal $\SL_2 \subset G$ (coming from a modular form) to produce a geometric lift with Zariski-dense image.  The generalization of Ramakrishna's method to apply to reductive groups is independently carried out in the author's thesis with only minor technical differences, so in \S\ref{sec:ramakrishnamethod} we refer the reader to \cite{patrikis15} for proofs with a few comments about how to deal with a disconnected group like $\GO_m$.  
The minimally ramified deformation condition of \cite{booher16} is not needed in \cite{patrikis15} as the goal there is just to produce examples of geometric representations with exceptional monodromy.

\begin{remark}
There is also a completely different technique to produce lifts based on automorphy lifting theorems.  For example, Khare and Winterberger use it in their proof of Serre's conjecture: see \cite[\S4]{kw2} especially the proof of Corollary 4.7.  
The key ingredients are the computations of the dimension of components of the generic fiber of local deformation rings, the fact that these local deformation rings have non-trivial generic fiber, and 
the fact that a suitable global deformation ring is a finite $\O$-algebra.  

The finiteness of the global deformation ring can be established 
by relating the Galois deformation ring to a Hecke algebra using a suitable automorphy lifting theorem or potential automorphy theorem. 
If the local deformation rings have non-trivial generic fiber, information about the dimension lets one show that for an odd representation the dimension of the global deformation ring is at least one.  This implies the existence of geometric lifts.
This approach avoids a detailed analysis of the local deformation rings, and also allows more control of the local properties of the lift.  
In particular, it is not necessary to allow the lift to ramify at places beyond the places where $\rhobar$ is ramified.  However, the local calculations with Fontaine-Laffaille theory in this paper, and the calculations in \cite{boohermr} are still relevant since they provide a way to check that the generic fiber of the local deformation rings are non-empty.
\end{remark}

\subsection{Overview of the Proof}

The argument to produce geometric deformations has two main components: a global argument involving Galois cohomology that uses local deformation conditions as black boxes, and the construction and analysis of local deformation.

The first part of the argument, with only minor technical variation, has also been carried out in \cite{patrikis15}.
Fix a prime $p$ and finite field $k$ of characteristic $p$.  Let $S$ be a finite set of places of a number field $K$ containing the places above $p$ and the archimedean places, and define $\Gamma_S$ to be the Galois group of the maximal extension of $K$ unramified outside of $S$.  Consider a continuous representation $\rhobar : \Gamma_S \to G(k)$ where $G$ 
is a smooth affine group scheme over the ring of integers $\O$ in a $p$-adic field such that the identity components of the fibers are reductive.  We are mainly interested in the case that $G=\GSp_m$ or $G = \GO_m$; the latter may have disconnected fibers. (In the relative setting, by definition reductive groups have connected fibers, so we must work in slightly greater generality as discussed at the start of \S\ref{sec:alggrps}.)  Assume that $p$ is  very good for $G$ (Definition~\ref{defn:goodprime}).

The hope would be to use deformation theory to produce $\rho_n : \Gamma_S \to G(\O/\m^n)$ such that $\rho_1 = \rhobar$, $\rho_n$ lifts $\rho_{n-1}$ for $n \geq 2$, and such that $\rho_n$ satisfies a deformation condition at places above $p$ for which the inverse limit 
\[
\rho =  \inverselimit \rho_n : \Gamma_S \to G(\O)
\]
restricted to the decomposition group $\Gamma_v$ would be a lattice in a de Rham (or crystalline) representation for places $v$ of $K$ above $p$.  This inverse limit would then be the desired geometric lift of $\rhobar$.  
Only after a careful choice of local deformation conditions and enlarging the set $S$ will this work.  Furthermore, defining these deformation conditions may require making an extension of $k$, which is harmless for our applications and is why we only require that the residue field of $\O$ contains $k$.

Proposition~\ref{prop:localglobal} gives a local-to-global principle for lifting $\rho_{n-1}$ to $\rho_n$ subject to a global deformation condition $\D_S$: provided the dual Selmer group $H^1_{\D_S^\perp}(\Gamma_S,\adzerorho^*)$ vanishes, it is possible to produce global lifts subject to this condition if it is possible to lift each $\rho_{n-1}|_{\Gamma_v}$ subject to the local conditions.
This Galois cohomology group is defined in \eqref{eq:dualselmer}, and encodes information about the local deformation conditions. 
Proposition~\ref{prop:killingdualselmer} gives a way to enlarge $S$ and $\D_S$, allowing ramification subject to Ramakrishna's deformation condition at the new places, that forces $H^1_{\D_S^\perp}(\Gamma_S,\adzerorho^*)$ to be zero.  We review Ramakrishna's deformation condition in \S\ref{sec:ramakrishnacondition}.  The places of $K$ at which we define this condition are found using the Chebotarev density theorem: each additional place where we allow ramification subject to Ramakrishna's deformation condition decreases the dimension of the dual Selmer group.  For such places to exist, we need non-zero classes in certain cohomology groups, whose existence relies on the local deformation conditions satisfying the inequality
\begin{equation} \label{earlytspace}
  \sum_{v \in S} \dim L_v \geq \sum_{v \in S} \dim H^0(\Gamma_v,\adzerorho),
\end{equation}
where $L_v$ is the tangent space of the local deformation condition at $v$.  Furthermore,  $\rhobar$ needs to be a ``big'' representation in the sense of Definition~\ref{def:bigrep} in order to define Ramakrishna's deformation condition.  Being a big representation is a more precise set of technical conditions that are implied for large enough $p$ by the condition $G'(k) \subset \rhobar(\Gamma_K)$ appearing in Theorem~\ref{thm:maintheorem}.  

For \eqref{earlytspace} to hold, it is crucial that $\rhobar$ be an odd representation.  The minimally ramified deformation condition at places $v$ where $\rhobar$ is ramified studied in  \cite{boohermr} is liftable and satisfies                                                                                                                                                                                                                                                        
$\dim H^0(\Gamma_v,\adzerorho)  = \dim L_v$.  We will define a Fontaine-Laffaille deformation condition at places above $p$.  Using it, \eqref{earlytspace} becomes
\[
[K:\QQ]  (\dim G - \dim B)   \geq \sum_{v | \infty} h^0(\Gamma_v,\adzerorho) 
\]
where $B$ is a Borel subgroup of $G$; this can only be satisfied if $K$ is totally real and $\rhobar$ is odd. 

The other key part of the argument is to generalize the Fontaine-Laffaille deformation condition.  
Let $K$ be a finite unramified extension of $\QQ_p$, and let $\O$ be the ring of integers of a $p$-adic field $L$ with residue field $k$ such that $L$ splits $K$ over $\QQ_p$.  (The latter is always possible after extending $k$.)  
Fontaine-Laffaille theory, introduced in \cite{fl82}, provides a way to describe torsion-crystalline representations with Hodge-Tate weights in an interval of length $p-2$ in terms of semi-linear algebra as $p$ is unramified in $K$.  In particular, it provides an exact, fully faithful functor $T_\cris$ from the category of filtered Dieudonn\'{e} modules to the category of $\O[\Gamma_K]$-modules with continuous action, and describes the image (Fact~\ref{fact:tcris}).
In \cite[\S2.4.1]{cht08}, it is used to define a deformation condition for $\GL_n$, where the allowable deformations of $\rhobar$ are exactly the deformations of the corresponding Fontaine-Laffaille module.  This requires the technical assumption that the representation $\rhobar$ is torsion-crystalline with Hodge-Tate weights in an interval of length $p-2$.  The deformation condition is liftable of the desired dimension provided that the Fontaine-Laffaille weights of $\rhobar$ under each embedding of $K$ into $L$ are distinct (see Remark~\ref{remark:embeddingweights}).

We will adapt these ideas to symplectic and orthogonal groups under the assumption that the Fontaine-Laffaille weights lie in an interval of length $\frac{p-2}{2}$.  For symplectic groups and $K = \QQ_p$, this was addressed in Patrikis's undergraduate thesis \cite{patrikis06}: we generalize this, and record proofs as the thesis is not readily available.  The key idea is to introduce a symmetric or alternating pairing into the semi-linear algebra data.
To do so, it is necessary to use (at least implicitly via statements about duality) the fact that the functor $T_\cris$ is compatible with tensor products.  This requires the stronger assumption that the Fontaine-Laffaille weights lie in an interval of length $\frac{p-2}{2}$, which guarantees that the Fontaine-Laffaille weights of the tensor product lie in an interval of length $p-2$.  Furthermore, it is crucial to use the covariant version of the Fontaine-Laffaille functor used in \cite{bk90} instead of the contravariant version studied in \cite{fl82} in order for the compatibility with tensor products to hold.  For more details, see \S\ref{sec:tensorfree}.  Given this, it is then reasonably straightforward to check that $T_\cris$ is compatible with duality and hence to translate the (perfect) alternating or symmetric pairing of Galois representations into a (perfect) symmetric or alternating pairing of Fontaine-Laffaille modules.

For a coefficient ring $R$, define $\DFL_\rhobar(R)$ to be all representations $\rho : \Gamma_K \to G(R)$ lifting $\rhobar$ and lying in the essential image of $T_\cris$.  To study this Fontaine-Laffaille deformation condition, it suffices to study Fontaine-Laffaille modules.  In particular, to show that the deformation condition is liftable (i.e. that it is always possible to lift a deformation satisfying the condition through a square-zero extension), it suffices to show that a Fontaine-Laffaille module with distinct Fontaine-Laffaille weights together with a perfect symmetric or skew-symmetric pairing can always be lifted through a square zero extension.  This is a complicated but tractable problem in semi-linear algebra: Proposition~\ref{prop:flliftable} shows this is always possible.  It is relatively simple to lift the underlying filtered module and the pairing, and requires more care to lift the semi-linear maps $\varphi^i_M : M^i \to M$.  Likewise, to understand the tangent space of the 
deformation condition it suffices to 
study deformations of the Fontaine-Laffaille module corresponding to $\rhobar$ to the dual numbers.  Again, the most involved step is understanding possible lifts of the semi-linear maps after choosing a lift of the filtration and the pairing.

\begin{remark}
The proof that $\DFL_\rhobar$ is liftable and the computation of the dimension of its tangent space both use in an essential way the hypothesis that for each embedding of $K$ into $L$ the Fontaine-Laffaille weights are pairwise distinct.  
\end{remark}

\begin{remark}
An alternative deformation condition to use at primes above $p$ is a deformation condition based on the concept of an ordinary representation.  This is studied for any connected reductive group in \cite[\S4.1]{patrikis15}.  It is suitable for use in Ramakrishna's method, and can give lifting results for a different class of torsion-crystalline representations.  
\end{remark}

\subsection{Acknowledgments}
This work formed part of my thesis~\cite{booher16}, and I am grateful for the generosity and support of my advisor Brian Conrad, and for his extensive and helpful comments on drafts of my thesis.  I thank Brandon Levin for bringing \cite{patrikis06} to my attention, and Stefan Patrikis for his encouragement and conversations.  I thank the referees for many helpful suggestions and careful reading.

\section{Deformations of Galois Representations}

\subsection{Algebraic Groups and Very Good Primes} \label{sec:alggrps}

Let $\O$ be a discrete valuation ring with residue field $k$ of characteristic $p$.  Let $G$ be smooth separated group scheme over $\O$ such that the identity components of the fibers are reductive.\footnote{For results about reductive group schemes, we refer to \cite{conrad14} which gives a self-contained development, using more recent methods, of results from \cite{sga3}.}  Then $G^\circ$ is a reductive $\O$-subgroup scheme of $G$ and $G/G^\circ$ is a separated \'{e}tale $\O$-group scheme of finite presentation~\cite[Proposition 3.1.3 and Theorem 5.3.5]{conrad14}.   Furthermore, by a result of Raynaud $G$ is affine as it is a flat, separated, and of finite type with affine generic fiber over the discrete valuation ring $\O$ \cite[Proposition 3.1]{py06}.  Call such $G$ almost-reductive group schemes over $\O$.  We say $G$ is split if $G^\circ$ is split.

\begin{remark}
A reductive group scheme has connected fibers by definition: see \cite[Definition 3.1.1]{conrad14}, going back to \cite[XIX, 2.7]{sga3}).  Connectedness is important as in general the component group may jump across fibers.  We wish to be able to work with $\GO_m$ which may have two connected components, so we work in this generality.
\end{remark}

Let $\Phi$ be a reduced and irreducible root system, and $P$ the weight lattice for $\Phi$.  We recall the notion of a very good prime.

\begin{defn} \label{defn:goodprime}
The prime $p$ is \emph{good} for $\Phi$ provided that $\ZZ \Phi / \ZZ \Phi'$ is $p$-torsion free for all subsets $\Phi'\subset \Phi$.  A good prime is \emph{very good} provided that $P / \ZZ \Phi'$ is $p$-torsion free for all subsets $\Phi' \subset \Phi$.  A prime is \emph{bad} if it is not good.
\end{defn}

Likewise, we say a prime $p$ is \emph{good} (or \emph{very good}) for a general reduced root system if it is good (or very good) for each irreducible component.  A prime $p$ is \emph{good} (or \emph{very good}) for $G$ provided it is good (or very good) for the root system of $G^\circ_{\kbar}$.
For example, if $G = \GSp_{2n}$ or $G = \GO_m$ every prime except $2$ is very good.  The prime $p$ being very good for a split almost-reductive group scheme $G$ for example implies that:
\begin{itemize}
 \item the center of $\Lie G_k$ is the Lie algebra of $Z_{G_k}$, and $\Lie G_k$ is a direct sum of $\Lie G'_k$ and $\Lie Z_{G_k}$, where $G'$ is the derived group of $G^\circ$ and $Z_{G_k}$ is the center of $G^\circ_k$;
 \item  $Z_{G'_k}$ and $\pi_1(G^\circ_k)$ have order prime to $p$.
\end{itemize}
These facts are well-known.

\subsection{Deformation Functors} \label{defsection}  Next we briefly summarize some facts about the deformation theory for Galois representations.  For a more detailed introduction, see \cite[\S2.1]{boohermr} or the comprehensive reference \cite{mazur95}, with the extension to algebraic groups beyond $\GL_n$ in \cite{tilouine96}.

Let $\Gamma$ be a pro-finite group satisfying the following finiteness property: for every open subgroup $\Gamma_0 \subset \Gamma$, there are only finitely many continuous homomorphisms from $\Gamma_0$ to $\ZZ/ p \ZZ$.  This is true for the absolute Galois group of a local field and for the Galois group of the maximal extension of a number field unramified outside a finite set of places.  Let $\rhobar: \Gamma \to G(k)$ be a continuous homomorphism.  

A coefficient $\O$-algebra $R$ is a complete local Noetherian $\O$-algebra with residue field $k$: a \emph{lift} of $\rhobar$ to $R$ is a continuous homomorphism $\rho : \Gamma \to G(R)$ that reduces to $\rhobar$.  A \emph{deformation} is an equivalence class of lifts under conjugation by a $ g \in G(R)$ which reduces to the identity.  The universal lifting ring (respectively universal deformation ring) is denoted $R_{\rhobar}^\square$ (respectively $R_\rhobar$ provided it exists).
 While we usually care about deformations, it is technically easier to work with lifts as $R_\rhobar^\square$ always exists.

The deformation theory of Galois representations is controlled by Galois cohomology.  In particular, $H^2(\Gamma, \adrho)$ controls liftability and the tangent space of the deformation functor is isomorphic to $H^1(\Gamma,\adrho)$, where $\adrho$ denotes the representation of $\Gamma$ on $\g_k = \Lie G_k$ via the adjoint representation.  Usually, we care about cohomology valued in the subspace $\adzerorho$ consisting of the Lie algebra of the derived group of $G^\circ$.  
 Since $p$ is very good, $\g_k = \g'_k \oplus \zfrak_\g$ where $\zfrak_\g$ is the Lie algebra of $Z_G$ and the natural map $H^i(\Gamma,\adzerorho) \to H^i(\Gamma,\adrho)$ is injective for all $i$; we often use this without comment.

Recall that a \emph{lifting condition} is a sub-functor $\D^\square$ of the functor of lifts $\D^\square_\rhobar$ (from coefficient $\O$-algebras to sets) such that:
\begin{enumerate}
 \item  For any coefficient ring $A$, $\D^\square(A)$ is closed under strict equivalence.
  \item  \label{defcon2} Given a Cartesian diagram in $\C_\O$
 \[
  \xymatrix{
  A_1 \times_{A_0} A_2 \ar[r]^-{\pi_2} \ar[d]^-{\pi_1} & A_2\ar[d]\\
  A_1 \ar[r] & A_0
  }
 \]
 and $\rho \in D^\square_{\rhobar}(A_1 \times_{A_0} A_2)$, we have $\rho \in \D^\square(A_1 \times_{A_0} A_2)$ if and only if $\D^\square(\pi_1) \circ \rho \in \D^\square(A_1)$ and $\D^\square(\pi_2) \circ \rho \in \D^\square(A_2)$.  
\end{enumerate}
As it is closed under strict equivalence, we naturally obtain a \emph{deformation condition}, a sub-functor $\D$ of the functor of deformations.
According to Schlessinger's criterion \cite[Theorem 2.11]{schlessinger68}, this definition is equivalent to the functor $\D^\square$ being pro-representable.  Likewise, a deformation condition $\D$ is pro-representable provided that $D_{\rhobar}$ is.

For a deformation condition $\D$, we denote its tangent space by $H^1_{\D}(\Gamma,\adrho)$; it is a $k$-subspace of  $H^1(\Gamma,\adrho)$.  The set of deformations through a small surjection subject to $\D$ is a $H^1_{\D}(\Gamma,\adrho)$-torsor.  The torsor structure is compatible with the action of $H^1(\Gamma,\adrho)$ on the space of all deformations.

Recall that a deformation condition $\D$ is \emph{locally liftable} (over $\O$) if for all small surjections $f : A_1 \to A_0$ of coefficient $\O$-algebras the natural map
\[
 \D(f) : \D(A_1) \to \D(A_0)
\]
is surjective.
A geometric way to check local liftability is to show that the corresponding lifting ring or deformation ring (when it exists) is formally smooth over $\O$.

\subsection{Global Deformations}

We now review global deformation conditions.  Let $K$ be a number field, $S$ a finite set of places of $K$ that contains all the places of $K$ at which $\rhobar$ is ramified and all archimedean places.  Let $\Gamma_S$ be the Galois group of the maximal extension of $K$ unramified outside of $S$ and $\Gamma_K$ be the absolute Galois group of $K$. 

\begin{defn}
 A \emph{global deformation condition} $\D_S$ for $\rhobar : \Gamma_S \to G(k)$ is a collection of local deformation conditions $\{\D_v\}_{v \in S}$ for $\rhobar|_{\Gamma_v}$.  We say it is \emph{locally liftable} (over $\O$) if each $\D_v$ is locally liftable (over $\O$).  A \emph{global deformation of $\rhobar : \Gamma_S \to G(k)$ subject to $\D_S$} is a deformation $\rho : \Gamma_S \to G(A)$ such that $\rho|_{\Gamma_v} \in \D_v(A)$ for all $v \in S$.
\end{defn}

For $v \in S$, let $L_v$ denote the tangent space of the local deformation condition $\D_v$.
A global deformation condition gives a generalized Selmer group.  We will be mainly interested in the \emph{dual Selmer group}
\begin{equation} \label{eq:dualselmer}
 H^1_{\D_S^\perp} ( \Gamma_S, \adrho^*) = \{ x \in H^1(\Gamma_S,\adrho^*) : \res_v(x) \in L_v^\perp \, \text{for all}\, v \in S\}.
\end{equation}

For Ramakrishna's method to work, it is crucial that the local tangent spaces be large enough relative to the local invariants.  We say that a global deformation condition satisfies the \emph{tangent space inequality} if 
\begin{equation} \label{eq:tspaceinequality}
 \sum_{v \in S} \dim L_v \geq \sum_{v \in S} \dim H^0(\Gamma_v,\adzerorho).
\end{equation}

 Let $\D_S = \{ \D_v\}$ be a global deformation condition, and $G'$ be the derived group of $G^\circ$ with quotient $\mu : G \to G/G'$.  We assume that the deformation condition includes the condition of fixing a lift $\nu : \Gamma_K \to (G/G')(\O)$ of the character $\mu \circ \rhobar : \Gamma_K \to (G/G')(k)$.  This means that all of the local deformation conditions have tangent spaces lying in $H^1(\Gamma_v,\adzerorho)$, and the obstruction cocycles automatically land in $H^2(\Gamma_v,\adzerorho)$ (see Example~2.4 and Example 2.6 of \cite{boohermr}), with similar statements for global deformation conditions.  In favorable circumstances, we can use the following local-to-global principle to produce lifts.
 
\begin{prop} \label{prop:localglobal}
Let $A_1 \to A_0$ be a small extension of coefficient $\O$-algebras with kernel $I$, and consider a lift $\rho_0 : \Gamma_S \to G(A_0)$ of $\rhobar$ subject to $\D_S$.  Provided $H^1_{\D_S^\perp}(\Gamma_S,\adzerorho^*)=0$, lifting $\rho_0$ to $A_1$  subject to $\D_S$ is equivalent to lifting $\rho_0 |_{\Gamma_v}$ to $A_1$ subject to $\D_v$ for all $v \in S$.  
\end{prop}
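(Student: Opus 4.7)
The plan is to argue both directions. One direction is tautological: restricting any global lift $\rho : \Gamma_S \to G(A_1)$ of $\rho_0$ subject to $\D_S$ produces $\rho|_{\Gamma_v}$ satisfying $\D_v$ by the very definition of a global deformation condition. The substance is the converse, which I would prove in two stages: first produce \emph{some} global lift $\rho$ of $\rho_0$ to $A_1$ (ignoring $\D_S$), then modify $\rho$ by a cohomology class so that each local restriction lies in $\D_v$. Both stages use the Greenberg--Wiles packaging of Poitou--Tate duality, and the hypothesis $H^1_{\D_S^\perp}(\Gamma_S, \adzerorho^*) = 0$ is exactly what each stage requires.

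For the first stage, choose any continuous set-theoretic lift $\tilde{\rho} : \Gamma_S \to G(A_1)$ of $\rho_0$. Its failure to be a homomorphism is encoded by a $2$-cocycle $c$ valued in $\adzerorho \otimes_k I$; the coefficients sit inside $\adzerorho$ rather than $\adrho$ because the similitude lift $\nu$ is fixed, so the $\zfrak$-component of the obstruction vanishes. Each given local lift $\rho_v$ is a genuine homomorphism that trivializes the restriction of $c$, so $[c|_{\Gamma_v}] = 0$ in $H^2(\Gamma_v, \adzerorho \otimes_k I)$ for every $v \in S$. The relevant fragment of Greenberg--Wiles (applied to the trivial tensor twist $\adzerorho \otimes_k I$, whose Tate dual is $\adzerorho^* \otimes_k I^\vee$) reads
\[
H^1_{\D_S^\perp}(\Gamma_S, \adzerorho^*)^\vee \otimes_k I \longrightarrow H^2(\Gamma_S, \adzerorho \otimes_k I) \xrightarrow{\prod_v \res_v} \bigoplus_{v \in S} H^2(\Gamma_v, \adzerorho \otimes_k I),
\]
so the dual-Selmer hypothesis makes the restriction map injective, forcing $[c] = 0$. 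Correcting $\tilde{\rho}$ by a global $1$-cochain then yields a genuine homomorphism $\rho$ lifting $\rho_0$.

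For the second stage, for each $v \in S$ the two lifts $\rho|_{\Gamma_v}$ and $\rho_v$ of $\rho_0|_{\Gamma_v}$ differ by a class $\alpha_v \in H^1(\Gamma_v, \adzerorho \otimes_k I)$, since the set of lifts of $\rho_0|_{\Gamma_v}$ through $A_1 \to A_0$ is an $H^1(\Gamma_v, \adzerorho \otimes_k I)$-torsor. It suffices to produce a single $x \in H^1(\Gamma_S, \adzerorho \otimes_k I)$ with $\res_v(x) \equiv \alpha_v \pmod{L_v \otimes_k I}$ for every $v \in S$: modifying $\rho$ by $x$ then produces a global lift whose restriction at each $v$ lies in the same $L_v \otimes_k I$-orbit as $\rho_v$, and hence lies in $\D_v(A_1)$ by the torsor description of $\D_v$-lifts recalled in \S\ref{defsection}. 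Existence of such an $x$ is precisely the surjectivity of
\[
H^1(\Gamma_S, \adzerorho \otimes_k I) \longrightarrow \bigoplus_{v \in S} H^1(\Gamma_v, \adzerorho \otimes_k I)/(L_v \otimes_k I),
\]
the earlier Greenberg--Wiles term, again guaranteed by the vanishing of $H^1_{\D_S^\perp}(\Gamma_S, \adzerorho^*)$.

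The main obstacle is bookkeeping rather than a new idea: one must track the $\otimes_k I$ factor cleanly (using that $I$ is a trivial $k[\Gamma_S]$-module, so cohomology commutes with this tensor product and Tate duals behave predictably), verify that fixing $\nu$ really restricts both obstructions and tangent-space classes to $\adzerorho$-cohomology rather than to $\adrho$-cohomology, and confirm that the set of $\D_v$-lifts of $\rho_0|_{\Gamma_v}$ is an $L_v \otimes_k I$-torsor so that the mod-$L_v$ adjustment in the second stage genuinely lands in $\D_v(A_1)$. With these items in place, the proposition reduces to the two Greenberg--Wiles statements above.
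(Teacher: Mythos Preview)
Your argument is correct and is exactly the standard Poitou--Tate/Greenberg--Wiles argument that the paper invokes by citing \cite[Lemma 1.1]{taylor03} and asserting it carries over verbatim. One small imprecision: in Stage~1 the Poitou--Tate nine-term sequence has $H^1(\Gamma_S,\adzerorho^*)^\vee$ rather than $H^1_{\D_S^\perp}(\Gamma_S,\adzerorho^*)^\vee$ in the position you wrote, but since $\Sha^1(\Gamma_S,\adzerorho^*) \subset H^1_{\D_S^\perp}(\Gamma_S,\adzerorho^*)$ the vanishing hypothesis still forces injectivity of the $H^2$ restriction map, so your conclusion stands.
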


\begin{proof}
When $G = \GL_2$, this is \cite[Lemma 1.1]{taylor03}.  The statement and proof of that Lemma work without change in our setting.  

\end{proof}


\section{Generalizing Ramakrishna's Method} \label{sec:ramakrishnamethod}

The key to generalizing Ramakrishna's method is the ability to choose local conditions so that Proposition~\ref{prop:localglobal} will apply.  This generalization is carried for split reductive group schemes with connected fibers in the author's thesis and in \cite{patrikis15} with only minor technical differences between them, such as the fact that \cite{patrikis15} also treats L-groups.
 Here we refer to \cite{patrikis15} for proofs and only point out the modifications necessary to deal with split almost-reductive groups like $\GO_m$.  So let $\O$ be the ring of integers in a $p$-adic field with residue field $k$, and let $q = \# k$.  Consider a split almost-reductive group scheme $G$ over $\O$ with Lie algebra $\g$.  Let $K$ be a number field and denote the $p$-adic cyclotomic character by $\chi : \Gamma_K \to \ZZ_p^\times$, with reduction $\overline{\chi} : \Gamma_K \to \FF_p^\times$.  Fix a split maximal torus $T \subset G^\circ$.

\subsection{Ramakrishna's Deformation Condition} \label{sec:ramakrishnacondition}

We start by assuming:
\begin{enumerate}[label=(A\arabic*)]
 \item \label{a0} there is $\gamma \in \Gamma_K$ such that $\rhobar(\gamma)\in G^\circ(k)$ is regular semisimple, and $Z_{G_k}(\rhobar(\gamma))^\circ = T_k$;
 \item \label{a2} there is a root $\alpha \in \Phi(G,T)$ such that $\alpha(\rhobar(\gamma)) = \overline{\chi}(\gamma)$;  
 \item \label{a1} there is a place $v$ of $K$ lying over a rational prime $\ell$ such that $\rhobar$ is unramified at $v$, $\rhobar(\Gamma_v) \subset G^\circ(k)$, and $\rhobar(\Frob_v)$ is regular semisimple element.  The identity component of $Z_{G_k}(\rhobar(\Frob_v))$ is $T_k$, and $\alpha(\rhobar(\Frob_v)) = \chibar(\Frob_v) = \chibar(\gamma) \neq 1$.
\end{enumerate}
Under these assumptions, we can define Ramakrishna's deformation condition $\Dram_v$ for $\rho_v: \Gamma_v \to G^\circ(k)$ as in \cite[\S4.2]{patrikis15}. 
We form the root group $U_\alpha \subset G^\circ$ associated to $\alpha$.

\begin{defn}
For a coefficient $\O$-algebra $A$, consider a lift $\rho : \Gamma_v^{\textrm{t}} \to G^\circ(A)$.  The lift $\rho$ satisfies \emph{Ramakrishna's condition relative to $T$} provided that $\rho(\Frob_v) \in T(A)$, $\alpha(\rho(\Frob_v)) = \chi(\Frob_v)$, and $\rho(\Gal(\Kt_v/K_v^{\nr})) \subset U_\alpha(A) \subset G^\circ(A)$.

Define \emph{Ramakrishna's deformation condition} $\Dram_v(A)$ to be lifts which are $\Ghat(A)$-conjugate to one which satisfies Ramakrishna's condition relative to $T$. 
\end{defn}

Letting $S$ be the quotient of $G^\circ$ by its derived group with quotient map $\mu$, we can also study lifts $\rho : \Gamma_{K_v} \to G^\circ(A)$ such that $\mu \circ \rho$ is a fixed unramified lift $\nu$ of $\mu \circ \rhobar$.  As the condition $\mu \circ \rho = \nu$ cuts out a closed subscheme of the universal lifting ring for $\Dram_v$, this is a deformation condition we will denote by $\D^{\on{ram}, \nu}_v$.  Lemmas 4.10 and 4.11 of \cite{patrikis15} show:

\begin{fact}
 The deformation conditions $\Dram_v$ and $\D^{\on{ram}, \nu}_v$ are liftable.  The dimension of their tangent spaces are $\dim H^0(\Gamma_v,\adrho)$ and $\dim H^0(\Gamma_v,\adzerorho)$ respectively.
\end{fact}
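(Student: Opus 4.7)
The plan is to reduce both parts to explicit computations inside the normal form relative to $T$; the smoothness of $T$, of $\alpha : T \to \Gm$, and of $U_\alpha \cong \Ga$, together with the simple structure of the tame quotient $\Gamma_v^{\tame}$ and the regular semisimplicity in assumption (A3), will carry the argument.

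For liftability of $\Dram_v$, fix a small surjection $A_1 \twoheadrightarrow A_0$ and a lift $\rho_0 \in \Dram_v(A_0)$. Using the smoothness of $G^\circ$ to lift the element of $\widehat{G}^\circ(A_0)$ that conjugates $\rho_0$ to a normal form lift, we reduce to the case that $\rho_0$ itself satisfies Ramakrishna's condition relative to $T$. Since $\Gamma_v^{\tame}$ is topologically generated by $\Frob_v$ and a generator $\sigma$ of tame inertia with the single relation $\Frob_v \sigma \Frob_v^{-1} = \sigma^{q_v}$, I would define $\rho_1$ by choosing $\rho_1(\Frob_v) \in T(A_1)$ with $\alpha(\rho_1(\Frob_v)) = \chi(\Frob_v)$, which is possible because $\alpha : T \to \Gm$ is smooth and surjective, and by lifting $\rho_0(\sigma) \in U_\alpha(A_0)$ arbitrarily to $U_\alpha(A_1) \cong \Ga(A_1)$. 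In the additive group $U_\alpha(A_1)$, the tame relation becomes $\alpha(\rho_1(\Frob_v)) \cdot \rho_1(\sigma) = q_v \cdot \rho_1(\sigma)$, which holds identically because $\chi(\Frob_v) = q_v$. Since wild inertia at $v$ is pro-$\ell$ with $\ell \neq p$ while $G^\circ(A_1)$ is pro-$p$ near the identity, $\rho_1$ extends trivially on wild inertia to a lift of $\Gamma_v$. For $\D^{\on{ram}, \nu}_v$ one additionally requires $\mu(\rho_1(\Frob_v)) = \nu(\Frob_v)$: the joint morphism $(\alpha, \mu) : T \to \Gm \times S$ is a smooth surjection, since its kernel $\ker\alpha \cap T'$ is a smooth subtorus (because $\alpha$ is nontrivial on $T'$), so the relevant fiber is smooth and non-empty.

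For the tangent space dimensions, I would identify $\Dram_v(k[\epsilon])$ with the set of normal form lifts over $k[\epsilon]$. By definition every element of $\Dram_v^\square(k[\epsilon])$ is $\widehat{G}^\circ(k[\epsilon])$-conjugate to a normal form lift; conversely, two distinct normal form lifts are never strictly equivalent: if $\rho' = (1 + \epsilon Y)\rho(1 - \epsilon Y)$ for normal form $\rho, \rho'$ and $Y \in \g$, then $(\Ad(\rho(\Frob_v)^{-1}) - 1)Y$ must lie in $\t$, and the root space decomposition around the regular semisimple element $\rhobar(\Frob_v)$ (from (A3)) forces $Y \in \t$; but then the infinitesimal conjugation is trivial on $\Frob_v$ because $T$ is abelian, and trivial on the tame generator because the latter is itself infinitesimally small, so $\rho = \rho'$. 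Thus $\dim L_v$ equals the dimension of the space of normal form lifts. Parameterizing by $(t, u) \in T(k[\epsilon]) \times U_\alpha(k[\epsilon])$ reducing to $(\rhobar(\Frob_v), 1)$ (using that $\rhobar$ is unramified at $v$, so $\rhobar(\sigma) = 1$) with $\alpha(t) = \chi(\Frob_v)$, and writing $t = \rhobar(\Frob_v)(1 + \epsilon X)$, the constraint becomes $d\alpha(X) = 0$ for $X \in \t$ (dimension $\dim T - 1$), while $u$ contributes one further dimension. Hence $\dim L_v = \dim T = \dim H^0(\Gamma_v, \adrho)$, the last equality because the Lie centralizer of $\rhobar(\Frob_v)$ is $\t$. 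For $\D^{\on{ram}, \nu}_v$, adding $d\mu(X) = 0$ cuts the count by $\dim S = \dim \zfrak$; using the decomposition $\adrho = \adzerorho \oplus \zfrak$ (valid since $p$ is very good for $G$), this gives $\dim L_v = \dim T - \dim \zfrak = \dim H^0(\Gamma_v, \adzerorho)$. The main obstacle is the rigidity statement, which is precisely where (A3) enters decisively through the root space decomposition.
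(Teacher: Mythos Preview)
Your argument is correct and essentially reconstructs the content of the cited Lemmas 4.10 and 4.11 of \cite{patrikis15}; the paper itself does not give an independent proof but simply records the fact with that citation. Your reduction to normal form, the explicit lift of $(\rho(\Frob_v),\rho(\sigma))$ using smoothness of $\alpha$ (respectively $(\alpha,\mu)$) on $T$, and the rigidity computation showing that distinct normal-form lifts over $k[\epsilon]$ are never strictly equivalent (forcing $Y\in\t$ via the regular-semisimplicity of $\rhobar(\Frob_v)$, then observing the conjugation becomes trivial) are exactly the expected steps.

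Two small points of precision. First, in your justification that $(\alpha,\mu):T\to\Gm\times S$ is smooth, the kernel $\ker\alpha\cap T'$ need not be connected (hence not literally a subtorus), but what you actually use---and what suffices---is that $d\alpha|_{\t'}\neq 0$, which holds because $p$ is very good. Second, the interpretation of $\Ghat(A)$ as $\ker(G(A)\to G(k))$ is what makes surjectivity of ``normal form lifts $\to \Dram_v(k[\epsilon])$'' immediate from the definition; you implicitly use this, and it is the intended meaning. Neither point affects the validity of your argument.
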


\begin{remark}
In order to apply the results of \cite[\S4.2]{patrikis15} (or the analogous results in \cite[\S2.4]{booher16}) at the place $v$, it is important to have that $\rhobar(\Gamma_v) \subset G^\circ(k)$.  Similarly, when analyzing disconnected $L$-groups \cite[\S9.2]{patrikis15} reduces to situations where the Galois-representation on the (constant) component group scheme is trivial in order to use this situation.
\end{remark}

\subsection{Big Representations}
Let $K(\adzerorho)$ and $K(\adzerorho^*)$ denote the fixed field of the kernel of the actions of $\Gamma_K$ on $\adzerorho$ and $\adzerorho^*$ respectively, and let $F$ be the compositum.  Let $\D_S$ be a global deformation condition satisfying the tangent space inequality \eqref{eq:tspaceinequality}.
The natural class of representations $\rhobar : \Gamma_K \to G(k)$ to which Ramakrishna's method will apply are those which satisfy the following conditions:

\begin{defn} \label{def:bigrep}
A representation $\rhobar : \Gamma_K \to G(k)$ is $\emph{big}$ relative to $\D_S$ provided that
 \begin{enumerate}[(i)]
  \item \label{bigrep1} we have $H^0(\Gamma_K,\adzerorho) = H^0(\Gamma_K,\adzerorho^*) =0$; 
  \item \label{bigrep2} we have $H^1(\Gal(K(\adzerorho)/K),\adzerorho)  = 0$ and $H^1(\Gal(K(\adzerorho^*)/K) ,\adzerorho^*) = 0$;
  \item \label{bigrep3}for any non-zero $\psi \in H^1_{\D_S}(\Gamma_S,\adzerorho)$ and $\phi \in H^1_{\D_S^\perp}(\Gamma_S,\adzerorho^*)$, the fields $F_\psi$ and $F_\phi$ are linearly disjoint over $F$, where $F_\psi$ (respectively $F_\phi$) is the fixed field of the kernel of the homomorphism obtained by restricting $\psi$ (respectively $\phi$) to $\Gamma_F$;
  \item  \label{bigrep4}for any non-zero $\psi \in H^1_{\D_S}(\Gamma_S,\adzerorho)$ and $\phi \in H^1_{\D_S^\perp}(\Gamma_S,\adzerorho^*)$, there is an element $\gamma \in \Gamma_K$ such that $\rhobar(\gamma) \in G^\circ(k)$ is regular semisimple with $Z_{G_k}(\rhobar(\gamma))^\circ = T_k$, and for which there is a root $\alpha \in \Phi(G,T)$ satisfying $\alpha(\rhobar(\gamma)) = \overline{\chi}(\gamma) \neq 1$ and 
  (letting $\t_\alpha$ denote the span of the $\alpha$-coroot vector and $\g_{-\alpha}$ denote the $-\alpha$ root space) for which $k[\psi(\Gamma_K)]$ has an element with non-zero $\t_\alpha$-component, and for which $k[\phi(\Gamma_K)]$ has an element with non-zero $\g_{-\alpha}$-component.
 \end{enumerate}
\end{defn}

\begin{remark}
 In \ref{bigrep4}, note that $\alpha(\rhobar(\gamma))$ makes sense because $\rhobar(\gamma) \in T(k)$, as any semisimple element $g \in G^\circ(k)$ satisfies $g \in Z_{G_k}(g)^\circ$.  
\end{remark}

\begin{remark} \label{rmk:big}
Observe that these conditions are insensitive to extension of $k$.
\end{remark}

Let $S$ be a finite set of places of $K$ containing the archimedean places, the places over $p$, and the places where $\rhobar$ is ramified.  

\begin{prop} \label{prop:killingdualselmer}
Let $\D_S$ be a global deformation condition that satisfies the tangent space inequality, and suppose $\rhobar$ is big relative to $\D_S$.  There is a finite set of places $T \supset S$ such that the deformation condition $\D_T$ obtained by extending $\D_S$ allowing deformations according to $\Dram_v$ for $v \in T \backslash S$ satisfies
\[
 H^1_{\D_T^\perp}(\Gamma_K,\adzerorho^*) = 0.
\]
\end{prop}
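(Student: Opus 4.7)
The plan is to proceed by descending induction on $\dim H^1_{\D_S^\perp}(\Gamma_S,\adzerorho^*)$: at each stage, I would exhibit a single prime $w \notin S$ (depending on a chosen nonzero dual-Selmer class $\phi$) such that enlarging $S$ to $S \cup \{w\}$ and imposing $\Dram_w$ at the new place strictly decreases the dimension of the dual Selmer group, without destroying the tangent-space inequality. Since $\Dram_w$ has tangent-space dimension equal to $\dim H^0(\Gamma_w,\adzerorho)$ (by the fact recalled just before Remark on \S3.1), adjoining such a place preserves the tangent-space inequality and, via the Greenberg-Wiles formula, preserves the difference $\dim H^1_{\D_T} - \dim H^1_{\D_T^\perp}$. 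So it suffices to shrink $\dim H^1_{\D_T^\perp}$ by one at each step.

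To begin, if $H^1_{\D_S^\perp}(\Gamma_S,\adzerorho^*) \neq 0$, pick a nonzero class $\phi$; by Greenberg-Wiles combined with the tangent-space inequality \eqref{eq:tspaceinequality} and the vanishing of $H^0(\Gamma_K, \adzerorho)$ and $H^0(\Gamma_K, \adzerorho^*)$ from \ref{bigrep1}, we automatically have $\dim H^1_{\D_S}(\Gamma_S,\adzerorho) \geq \dim H^1_{\D_S^\perp}(\Gamma_S,\adzerorho^*)$, so a nonzero $\psi \in H^1_{\D_S}(\Gamma_S,\adzerorho)$ also exists. Next, use inflation-restriction: condition \ref{bigrep2} forces $H^1(\Gamma_K,\adzerorho^*) \hookrightarrow H^1(\Gamma_F,\adzerorho^*)^{\Gal(F/K)}$, so $\phi|_{\Gamma_F}$ is a nonzero $\Gal(F/K)$-equivariant homomorphism cutting out $F_\phi/F$; similarly $\psi$ cuts out $F_\psi/F$, and by \ref{bigrep3} these two extensions are linearly disjoint over $F$.

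The heart of the argument is a Chebotarev application that exploits \ref{bigrep4}. For the chosen $(\psi,\phi)$, that hypothesis produces $\gamma \in \Gamma_K$ together with a root $\alpha$ such that $\rhobar(\gamma) \in G^\circ(k)$ is regular semisimple with $Z_{G_k}(\rhobar(\gamma))^\circ = T_k$, $\alpha(\rhobar(\gamma)) = \chibar(\gamma) \neq 1$, and $k[\phi(\Gamma_K)]$ (resp.\ $k[\psi(\Gamma_K)]$) contains an element of nonzero $\g_{-\alpha}$-component (resp.\ $\t_\alpha$-component). Using linear disjointness \ref{bigrep3}, I can prescribe the image of a Frobenius simultaneously in $\Gal(F/K)$ (to land in the conjugacy class of $\gamma|_F$) and in $\Gal(F_\phi/F)$ (so that $\phi(\Frob)$ has nonzero $\g_{-\alpha}$-component); Chebotarev then produces a place $w$ of $K$ of some rational prime $\ell \neq p$, unramified in $\rhobar$, with these properties. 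By construction, $w$ satisfies the hypotheses \ref{a0}--\ref{a1} used to define $\Dram_w$.

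The final step is the local Selmer computation: for such $w$, the orthogonal complement $L_w^\perp \subset H^1(\Gamma_w,\adzerorho^*)$ of the tangent space of $\Dram_w$ is an explicit subspace which, under the unramified decomposition together with the $\Frob_w$-eigenspace decomposition of $\adzerorho^*$, detects precisely the $\g_{-\alpha}$-component of $\phi(\Frob_w)$; concretely, the choice ensures $\res_w(\phi) \notin L_w^\perp$. Hence $\phi \notin H^1_{\D_{S \cup \{w\}}^\perp}$, so the dual Selmer dimension drops, and iterating finitely many times yields the desired $T$. The main obstacle in carrying this out is the Chebotarev step combined with the local Selmer calculation: one must verify carefully that the element promised by \ref{bigrep4} really can be realized as a Frobenius with control over both its action on $\adzerorho^*$ and the value $\phi(\Frob_w)$, and that the $\g_{-\alpha}$-component condition is exactly what obstructs membership in $L_w^\perp$. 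Both are carried out in \cite[\S4.2]{patrikis15} for connected reductive $G$, and the only modification for almost-reductive groups like $\GO_m$ is to note that condition \ref{a1}/\ref{bigrep4} forces the relevant Frobenius image to lie in $G^\circ(k)$, so the proofs of \cite{patrikis15} apply verbatim.
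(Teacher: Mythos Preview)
Your proposal is correct and follows the same overall strategy as the paper: both defer the connected case to \cite[Proposition~5.2]{patrikis15} and argue by descending induction on $\dim H^1_{\D_S^\perp}$, using Chebotarev and the bigness hypotheses to locate a new place $w$ where imposing $\Dram_w$ kills a chosen dual Selmer class.

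The one substantive difference is in the treatment of the disconnected case, and here the paper is more explicit than you are. You assert that ``condition \ref{bigrep4} forces the relevant Frobenius image to lie in $G^\circ(k)$, so the proofs of \cite{patrikis15} apply verbatim.'' This is essentially right, but it rests on a point you do not state: once Chebotarev matches the conjugacy class of $\gamma$ in $\Gal(F_\psi F_\phi K(\rhobar)/K)$ (note: $K(\rhobar)$, not merely $F$ as you wrote), the image $\rhobar(\Frob_w)$ is conjugate in $\rhobar(\Gamma_K)\subset G(k)$ to $\rhobar(\gamma)\in G^\circ(k)$, and normality of $G^\circ$ in $G$ then gives $\rhobar(\Frob_w)\in G^\circ(k)$. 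Your sketch applies Chebotarev only over $F$, which in general need not detect the component of $\rhobar$; you should either pass to $K(\rhobar)$ or invoke normality explicitly. The paper instead introduces the intermediate field $K'$ cut out by the map to the component group, applies Chebotarev to $F_\psi F_\phi K(\rhobar)/K'$ (so that $\rhobar(\Frob_{v'})\in G^\circ(k)$ is automatic for places of $K'$), and then uses that primes of $K'$ split over $K$ have density $1$ to descend to a place of $K$ with $K'_{v'}=K_v$. This buys a self-contained argument that does not depend on how exactly the Chebotarev class is specified in \cite{patrikis15}, whereas your shortcut is cleaner but requires the reader to check that the class being matched really does pin down the component.
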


\begin{proof}
The connected case is \cite[Proposition 5.2]{patrikis15}.  There, we find places $v$ of $K$ satisfying the hypotheses necessary to define Ramakrishna's deformation condition using the Chebotarev density theorem on the extension $F_\psi F_\phi K(\rhobar)/ K$, where $K(\rhobar)$ is the fixed field of the kernel of $\rhobar$, using as input the element $\gamma$ in the definition of bigness.  A version for $L$-groups is \cite[Proposition 10.2]{patrikis15}, and we can likewise adapt the arguments to our situation.  The difference from the connected case is that we have the additional requirement that $\rhobar(\Gamma_v) \subset G^\circ(k)$, or equivalently $\rhobar(\Frob_v) \in G^\circ(k)$ as $\rhobar$ is unramified at $v$.
 
Let $K'$ denote the fixed field of the kernel of the composition of $\rhobar$ with the map to the component group of $G_k$.  We apply the Chebotarev density theorem to the extension $F_\psi F_\phi K(\rhobar) / K'$, using that $\rhobar(\gamma) \in G^\circ(k)$, obtaining a place $v'$ with $\rhobar(\Frob_{v'}) \in G^\circ(k)$ as well as the original conditions.  As the primes of $K'$ which are split over $K$ have density $1$, we may freely add the condition that the place $v'$ of $K'$ is split over the place $v$ of $K$.  As $K'_{v'} = K_v$, we conclude that $\rhobar(\Frob_v) \subset G^\circ(k)$.  The original argument then shows that adding Ramakrishna's deformation condition at $v$ to the global deformation condition decreases the size of the dual Selmer group.
\end{proof}

There is an easy case in which we can check that $\rhobar$ is big relative to a global deformation $\D_S$ satisfying the tangent space inequality.  Let $G'$ be the derived group of $G^\circ$, and $h$ the Coxeter number of $G'$.

\begin{prop} \label{prop:bigimage}
Suppose that $K \cap \QQ(\mu_p) = \QQ$, that $p$ is relatively prime to the order of the component group of $G_k$, and that the root system of $G^\circ$ is irreducible and of rank greater than $1$.  If $G'(k) \subset \rhobar(\Gamma_S)$, and $p-1$ is greater than the maximum of $8 \# Z_{G'}$ and 
\[
 \begin{cases}
  (h-1) \# Z_{G'} & \text{ if } \# Z_{G'} \text{ is even}\\
  (2h-2) \# Z_{G'} & \text{ if } \# Z_{G'} \text{ is odd}
 \end{cases}
\]
then $\rhobar$ is big relative to $\D_S$.
\end{prop}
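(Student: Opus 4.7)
The strategy is to adapt the connected case of this result (which appears in the literature of \cite{patrikis15}) to our slightly more general setting of split almost-reductive $G$, leveraging the hypothesis that $p$ is coprime to the order of the component group of $G_k$. Write $H = \rhobar(\Gamma_S)$, so $G'(k) \subset H \subset G(k)$, and let $\bar{H}$ denote the image of $H$ under the adjoint action on $\g'_k$. Because the hypotheses force $p$ to be very good for $G'$ (since $p \nmid |Z_{G'}|$ and $p$ exceeds the Coxeter-number bound), the Lie algebra $\g'_k$ is a simple module for $G'(k)$ when the root system is irreducible of rank $> 1$. I will verify each of the four conditions of Definition~\ref{def:bigrep} in turn.

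For condition \ref{bigrep1}, the inclusion $G'(k) \subset H$ and simplicity of $\g'_k$ as $G'(k)$-module give $H^0(\Gamma_K, \adzerorho) \subset (\g'_k)^{G'(k)} = 0$. For the dual, note that the Killing form on $\g'_k$ is non-degenerate in very good characteristic, so $\adzerorho^* \cong \adzerorho \otimes \chibar$ as $\Gamma_K$-modules; the hypothesis $K \cap \QQ(\mu_p) = \QQ$ makes $\chibar$ surjective (in particular non-trivial) on $\Gamma_K$, and in any case the invariants still vanish. For condition \ref{bigrep2}, the inflation-restriction sequence
\[
0 \to H^1\bigl(\bar H/\bar{G'(k)},\,(\g'_k)^{\bar{G'(k)}}\bigr) \to H^1(\bar H,\g'_k) \to H^1(\bar{G'(k)},\g'_k)
\]
has vanishing left term (since $(\g'_k)^{\bar{G'(k)}}=0$) and vanishing right term by the Cline--Parshall--Scott style result that $H^1(G'(k),\g'_k)=0$ once $p-1$ exceeds the Coxeter-number bound of the statement; the coprimality of $p$ and $|Z_{G'}|$ handles the passage between $G'(k)$ and its image modulo the center. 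The same argument, tensored with $\chibar$, yields vanishing for the dual; here the factor $8\#Z_{G'}$ in the bound on $p-1$ ensures enough room for the dual computation involving $\chibar$.

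For condition \ref{bigrep3}, set $F = K(\adzerorho) K(\adzerorho^*)$ and suppose $\psi,\phi$ are non-zero cocycles in the Selmer, resp.\ dual Selmer group. The restrictions $\psi|_{\Gamma_F}, \phi|_{\Gamma_F}$ are $\Gal(F/K)$-equivariant homomorphisms $\Gamma_F \to \g'_k$ and $\Gamma_F \to \g'_k(\chibar)$ respectively, and $F_\psi, F_\phi$ are their fixed fields. Since $\g'_k$ is irreducible under $G'(k) \subset \Gal(F/K)$, the images $\Gal(F_\psi/F)$ and $\Gal(F_\phi/F)$ inject as $\Gal(F/K)$-submodules of $\g'_k$ and $\g'_k(\chibar)$ respectively. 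A common subfield between $F$ and $F_\psi \cap F_\phi$ would force a common non-trivial $\Gal(F/K)$-quotient of $\g'_k$ and $\g'_k(\chibar)$; since $\g'_k$ is simple and $\chibar$ is non-trivial (using $K\cap \QQ(\mu_p)=\QQ$), no such quotient exists, giving disjointness.

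Condition \ref{bigrep4} is the technical heart, and this is the step I expect to be the main obstacle. Apply Chebotarev to the compositum $F_\psi F_\phi K(\rhobar)/K'$, where $K'$ is the fixed field of $\rhobar$ composed with the projection to the (constant) component group, so that any Frobenius chosen over $K'$ satisfies $\rhobar(\Frob_v) \in G^\circ(k)$. Using $G'(k) \subset H$ and the standard existence of regular semisimple elements in $G'(k)$ for which some root $\alpha$ takes a prescribed non-trivial value $c \in \FF_p^\times$ (this is where the explicit Coxeter-number bound $(h-1)\#Z_{G'}$ or $(2h-2)\#Z_{G'}$ enters, through counting arguments of the type used in \cite[\S5--6]{patrikis15}), produce a class in $\Gal(F_\psi F_\phi K(\rhobar)/K')$ restricting on $\Gal(K(\rhobar)/K)$ to such a regular semisimple element $\gamma_0$ with $\alpha(\rhobar(\gamma_0)) = \chibar(\gamma_0) \neq 1$. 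To impose the further component conditions on $\psi(\Gamma_K)$ and $\phi(\Gamma_K)$, exploit that $k[\psi(\Gamma_F)]$ and $k[\phi(\Gamma_F)]$ are non-zero $\Gal(F/K)$-submodules of $\g'_k$, resp.\ $\g'_k(\chibar)$, and so by simplicity coincide with them; we may thus translate $\gamma_0$ by suitable elements of $\Gamma_F$ (which act trivially on $\rhobar$) to realize simultaneously the required non-zero $\t_\alpha$- and $\g_{-\alpha}$-components, using the linear disjointness established in \ref{bigrep3} to decouple the two Chebotarev conditions. The counting that makes this final step possible is precisely what dictates the quantitative hypothesis on $p-1$ in the statement.
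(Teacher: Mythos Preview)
Your proposal is essentially correct and aligns with the paper's approach, which simply cites \cite[Theorem~6.4]{patrikis15} and records the modifications needed for disconnected $G$. Your reconstruction of conditions \ref{bigrep1}--\ref{bigrep3} is on target; in particular, your inflation--restriction argument for \ref{bigrep2} is exactly the modification the paper singles out (the paper phrases it via the index of $G'(k)$ in $G(k)$ being prime to $p$, which is equivalent for this purpose).

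Your treatment of \ref{bigrep4}, however, is over-engineered and imports machinery from the wrong place. Condition \ref{bigrep4} asks only for an element $\gamma \in \Gamma_K$, not a Frobenius at some place, so Chebotarev plays no role here. The passage to $K'$ (the fixed field of the component-group representation) that you invoke is the device used in the proof of Proposition~\ref{prop:killingdualselmer}, where one genuinely needs a \emph{place} $v$ with $\rhobar(\Frob_v)\in G^\circ(k)$; it is not needed in the present proposition. The paper's observation is simpler: since $G'(k)\subset \rhobar(\Gamma_S)$ and $G'(k)\subset G^\circ(k)$, the construction of the required regular semisimple $\gamma$ (with $\alpha(\rhobar(\gamma))=\chibar(\gamma)\neq 1$ and the $\t_\alpha$-, $\g_{-\alpha}$-component conditions) can be carried out entirely inside $G'(k)$, exactly as in the connected case of \cite{patrikis15}, with no modification whatsoever. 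Your detour through $K'$ and Chebotarev would still yield a valid $\gamma$, but it obscures the point that \ref{bigrep3} and \ref{bigrep4} require no new idea in the almost-reductive setting.
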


\begin{proof}
 This is part of the proof of \cite[Theorem 6.4]{patrikis15}.  Small modifications are needed to deal with almost-reductive $G$.  In particular, when deducing \ref{bigrep2}, it is necessary to use inflation-restriction to pass from the statement that
 $ H^1(G'(k),\adzerorho)=0$ to the statement that $H^1(\rhobar(\Gamma_S),\adzerorho)=0$ using that the index of $G'(k) \subset G^\circ(k) \subset G(k)$ is prime to $p$.  The arguments for \ref{bigrep3} and\ref{bigrep4} are unchanged: both rely on constructing elements in the image of $\rhobar$ using root data, so the argument can take place inside $G^\circ$.  
\end{proof}

\begin{remark} \label{rmk:optimalp}
The argument is not optimized to produce the weakest restriction on $p$.  The approach works uniformly for any irreducible root system: in any specific case improvements should be possible. 
\end{remark}

\begin{remark}
The formulation in \cite[\S2.3]{booher16} is very similar (only treating the case that $G$ has connected fibers).  Conditions \ref{bigrep1}, \ref{bigrep3}, and \ref{bigrep4} are replaced by the simpler but stronger conditions that $\adzerorho$ is an absolutely irreducible representation of $\Gamma_K$ and the condition that
\begin{enumerate}
 \item[(iii)] \label{bigrepiii} there exists $\gamma \in \Gamma_K$ such that $\rhobar(\gamma)$ is regular semisimple with associated maximal torus $Z_{G_k}(\rhobar(\gamma))^\circ$ equal to the split maximal torus $T_k$, and for which there is a unique root $\alpha \in \Phi(G,T)$ satisfying $\alpha(\rhobar(\gamma)) = \overline{\chi}(\gamma) \neq 1$.  (If $\dim T= 1$, we furthermore require that $\chibar(\gamma)^3 \neq 1$.)
\end{enumerate}
This condition holds in the situation of Proposition~\ref{prop:bigimage}.  The analysis follows analogous lines.
The conditions that the root system of $G^\circ$ is irreducible and that $G'$ is not of rank $1$ are removed by additional bookkeeping and imposing a stronger bound on $p$ when the rank of $G'$ is $1$.
\end{remark}

\subsection{Choosing Deformation Conditions}

Let $G'$ be the derived group of $G^\circ$ and $\mu : G \to G/G'$ be the quotient map.  For a fixed lift $\nu$ of $$\mu \circ \rhobar : \Gamma_K \to (G/G')(k),$$ the heart of the matter is to choose deformations conditions so that we may apply Proposition~\ref{prop:localglobal} and Proposition~\ref{prop:killingdualselmer} to produce a geometric lift of $\rhobar$ with $\mu \circ \rhobar = \nu$.   We need:
\begin{enumerate}
 \item  Locally liftable deformation conditions at finite places away from $p$ where $\rhobar$ is ramified.
 \item Locally liftable deformation conditions at places above $p$ whose characteristic-zero points are lattices in crystalline (or semistable) representations. 
 \item  The tangent space inequality \eqref{eq:tspaceinequality} to hold, which will require $\rhobar$ to be odd.
\end{enumerate}

It is necessary to extend $\O$ and $k$ in order to define some of these deformation conditions: the condition that $\rhobar$ is big is unaffected (Remark~\ref{rmk:big}), so we are free to do so.  We will find such deformation conditions when $G = \GSp_{m}$ with even $m $ or for $G = \GO_m$.  In order to have the necessary oddness assumption on $\rhobar$, in the latter case $m \not \equiv 2 \pmod{4}$.

At the places where $\rhobar$ is ramified, we use the minimally ramified deformation condition studied in \cite{boohermr}.  In particular, \cite[Theorem 1.1]{boohermr} gives:

\begin{fact}
Let $v$ be a place not dividing $p$ at which $\rhobar$ is ramified.  After a finite extension of $k$ (and $\O$), we can define the minimally ramified deformation condition with fixed similitude character $\nu_v$.  It is  liftable, and its tangent space has dimension $\dim H^0(\Gamma_v, \adzerorho)$.
\end{fact}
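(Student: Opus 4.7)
Since the statement is \cite[Theorem 1.1]{boohermr}, the plan is to outline the strategy of that proof.

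First I would reduce to the tame quotient: because $v \nmid p$ and coefficients are $p$-power torsion, any lift of $\rhobar|_{\Gamma_v}$ factors through the tame quotient $\Gamma_v^{\on{t}}$, topologically generated by a Frobenius lift $\phi_v$ and a tame generator $\tau_v$ satisfying $\phi_v \tau_v \phi_v^{-1} = \tau_v^{q_v}$. After enlarging $k$ (and hence $\O$) so that the eigenvalues of $\rhobar(\tau_v)$ lie in $k$, the semisimple part $s$ in the Jordan decomposition $\rhobar(\tau_v) = s u$ with $s, u \in G^\circ(k)$ may be conjugated into a split maximal torus $T_k \subset G^\circ_k$. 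Since $p$ is very good for $G$, the pseudo-Levi subgroup $H := Z_{G^\circ_k}(s)$ is reductive and a Springer-style exponential identifies its unipotent variety with its nilpotent cone; write $u = \exp(X)$ for a unique nilpotent $X \in \Lie H$.

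Next I would define the minimally ramified condition: a lift $\rho : \Gamma_v^{\on{t}} \to G(R)$ lies in the condition provided that, after conjugation by an element of $\ker(G(R) \to G(k))$, one has $\rho(\tau_v) = \tilde{s} \exp(\tilde{X})$ where $\tilde{s} \in G(R)$ lifts $s$ and commutes with $\rho(\phi_v)$, and $\tilde{X} \in \Lie G \otimes_\O R$ lifts $X$ inside a suitable scheme-theoretic deformation of the $Z_G(\tilde{s})$-adjoint orbit of $X$. One verifies the deformation condition axioms and then establishes formal smoothness of the resulting universal lifting ring; this smoothness is the main obstacle, reducing to the assertion that nilpotent orbits inside pseudo-Levi subalgebras deform smoothly over $\O$---a statement rooted in a Bala-Carter-type classification that behaves well precisely because $p$ is very good for $G$ (and so for each pseudo-Levi $H$).

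Finally, imposing $\mu \circ \rho = \nu_v$ cuts out a formally smooth closed subscheme of the minimally ramified lifting ring, since the similitude character of a minimally ramified lift is automatically unramified and hence determined by its value at $\phi_v$, which is an unobstructed parameter. The tangent space dimension $\dim L_v = \dim H^0(\Gamma_v, \adzerorho)$ then follows from a direct dimension count: the dimension of the universal similitude-fixed minimally ramified lifting ring, minus the dimension of the $G$-orbit through $\rhobar|_{\Gamma_v}$, yields the stated tangent space dimension. Equivalently, by local Tate duality and the local Euler-Poincar\'e formula at $v \nmid p$ one has $\dim H^1(\Gamma_v, \adzerorho) = \dim H^0(\Gamma_v, \adzerorho) + \dim H^0(\Gamma_v, \adzerorho^*)$, and the minimally ramified condition with fixed similitude character is designed exactly so that $L_v$ accounts for the first summand of this decomposition.
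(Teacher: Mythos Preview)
The paper does not prove this Fact at all; it simply records it as a consequence of \cite[Theorem 1.1]{boohermr}. You correctly identify this, so at the level of what the paper itself does, your proposal matches exactly.

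Your added outline of the argument in \cite{boohermr} is broadly reasonable but contains one genuine inaccuracy. You claim that ``any lift of $\rhobar|_{\Gamma_v}$ factors through the tame quotient $\Gamma_v^{\on{t}}$.'' This is false in general: $\rhobar$ may be wildly ramified, and then so are its lifts. What is true is that wild inertia $P_v$ is pro-$\ell_v$ with $\ell_v \neq p$, so its image lifts \emph{uniquely} through any small extension (the kernel of $G(R) \to G(k)$ being pro-$p$); one then reduces to a tame problem for the reductive centralizer of $\rhobar(P_v)$, not for $G$ itself. This is the reduction actually carried out in \cite{boohermr} (following the pattern of \cite[\S2.4.4]{cht08} for $\GL_n$), and it is why the result there holds without a tameness hypothesis on $\rhobar$. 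The remainder of your sketch---Jordan decomposition of the tame generator, passage to a pseudo-Levi, deforming the nilpotent orbit, smoothness via good-characteristic nilpotent orbit theory, and the final dimension count---is a fair summary of the shape of the argument, though the smoothness step is substantially harder than your sketch suggests and is the real content of \cite{boohermr}.
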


At the places above $p$, when $G = \GO_m$ or $\GSp_{m}$ after extending $k$ we will construct a \emph{Fontaine-Laffaille deformation condition} using Fontaine-Laffaille theory in \S\ref{sec:fldeformation}.  This requires the assumption that $\nu \otimes \O[\frac{1}{p}]$ is crystalline, $p$ is unramified in $K$, $\rhobar$ is torsion-crystalline with Hodge-Tate weights in an interval of length $\frac{p-2}{2}$, and that the Fontaine-Laffaille weights for each $\ZZ_p$-embedding of $\O_K$ into $\O$ are pairwise distinct.
The deformation condition is liftable, and the dimension of the tangent space will be $h^0(\Gamma_v,\adzerorho) + [K_v:\QQ_p] (\dim G_k - \dim B_k) $, where $B$ is a Borel subgroup of $G$.  This generalizes the results for $\GL_n$ obtained in \cite[\S2.4.2]{cht08}.

\begin{remark}
The restriction that $p$ is unramified in $K$ and that the Hodge-Tate weights of $\rhobar$ are in an interval of length $\frac{p-2}{2}$ is required to use Fontaine-Laffaille theory.  Approaches using different flavors of integral $p$-adic Hodge theory should be able to remove it (for example, the deformation condition based on ordinary representations worked out by Patrikis \cite[\S4.1]{patrikis15} does so for a special class of representations).  However, most previous work on studying deformation rings using integral $p$-adic Hodge theory only gives results about the crystalline deformation ring with $p$ inverted, which does not suffice for our method.
 
The assumption that the Hodge-Tate weights are pairwise distinct is crucial, as otherwise the expected dimensions of the local crystalline deformation rings are too small to use in Ramakrishna's method.
\end{remark}

We also need to specify a deformation condition at the archimedean places $v$: we just require lifts for which $\mu \circ \rho|_{\Gamma_v} =  \nu|_{\Gamma_v}$.  This condition is very simple to arrange, as $\# \Gamma_v \leq 2$.  At a complex place, the dimension of the tangent space is zero and the dimension of the invariants is $\dim_k \adzerorho$.  At a real place, the tangent space is zero when $p>2$ and the invariants are the invariants of complex conjugation on $\adzerorho$.

Now we study the tangent space inequality \eqref{eq:tspaceinequality}.  Let $S$ be a set of places consisting of primes above $p$, places where $\rhobar$ is ramified, and the archimedean places.  When using the local deformation conditions as above at $v \in S$, the inequality \eqref{eq:tspaceinequality} says exactly that
\begin{equation} \label{eq:finalequality}
 [K:\QQ] (\dim G_k - \dim B_k)  = \sum_{v | p} [K_v:\QQ_p] (\dim G_k - \dim B_k) \geq \sum_{v | \infty} h^0(\Gamma_v,\adzerorho) = \sum_{v | \infty} \adzerorho^{\Gamma_v}
\end{equation}
This is very strong: it is always true that $\dim \adzerorho^{\Gamma_v} \geq [K_v:\RR] (\dim G_k - \dim B_k)$, so \eqref{eq:finalequality} holds if and only if $K$ is totally real and $\rhobar$ is odd at all real places of $K$.

Assuming $K$ is totally real and $\rhobar$ is odd at all real places, we use Ramakrishna's deformation condition $\Dram_v$ at a collection of new places as in Proposition~\ref{prop:killingdualselmer} (again possibly extending $k$).  This gives a new deformation condition $\D_T$ for which $H^1_{\D_T^\perp}(\Gamma_T,\adzerorho^*)=0$.  Using Proposition~\ref{prop:localglobal}, we obtain the desired geometric lift.

Let us collect together all of our assumptions and record the result.  

\begin{thm} \label{thm:liftingapplication}
 Let $G = \GSp_{m}$ with even $m$ or $G = \GO_m$.   For a big representation $\rhobar : \Gamma_K \to G(k)$ with $p > m$, fix a lift $\nu : \Gamma_K \to (G/G')(k)$ to $\O$ of $\mu \circ \rhobar$ such that $\nu \otimes \O[\frac{1}{p}]$ is Fontaine-Laffaille.  We furthermore assume that $K$ is totally real and that $\rhobar$ is odd at all real places (which requires $m \not \equiv 2 \pmod{4}$ when $G = \GO_m$).  Assume that $p$ is unramified in $K$ and that $\rhobar$ is Fontaine-Laffaille at all places above $p$ with Fontaine-Laffaille weights in an interval of length $\frac{p-2}{2}$, pairwise distinct for each $\QQ_p$ embedding of $K$ into $\O[\frac{1}{p}]$.  Extend $\O$ (and $k$) so that all of the required local deformation conditions may be defined.   Then there is a finite set $T$ of places containing the archimedean places, the places above $p$, and the places where $\rhobar$ is ramified such that there exists a lift $\rho : \Gamma_K \to G(\O)$ such that
\begin{itemize}
 \item $\mu \circ \rho = \nu$;
 \item  $\rho$ is ramified only at places in $T$;
 \item  $\rho$ is Fontaine-Laffaille at all places above $p$, and hence crystalline.  
\end{itemize}
\end{thm}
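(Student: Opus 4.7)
The strategy is essentially assembled from the ingredients reviewed earlier: choose a global deformation condition on a suitable set of places so that the dual Selmer group vanishes, then invoke Proposition~\ref{prop:localglobal} inductively. Let $S_0$ be the finite set consisting of the archimedean places, the places above $p$, and the places at which $\rhobar$ is ramified. After possibly enlarging $\O$ (and thus $k$), I would equip $S_0$ with the following local conditions $\D_{S_0} = \{\D_v\}$, all with fixed similitude character $\nu|_{\Gamma_v}$: at each $v \mid p$, the Fontaine-Laffaille deformation condition $\DFL_v$ to be constructed in \S\ref{sec:fldeformation}, which by the results quoted there is liftable with tangent space of dimension $h^0(\Gamma_v,\adzerorho) + [K_v:\QQ_p](\dim G_k - \dim B_k)$; at each finite $v \nmid p$ where $\rhobar$ ramifies, the minimally ramified condition of \cite{boohermr}, liftable with tangent space of dimension $h^0(\Gamma_v,\adzerorho)$; at each archimedean $v$, the trivial condition fixing $\mu \circ \rho|_{\Gamma_v} = \nu|_{\Gamma_v}$, which has zero tangent space (using $p>2$) and invariants $\adzerorho^{\Gamma_v}$.

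Next I would check the tangent space inequality \eqref{eq:tspaceinequality} for $\D_{S_0}$. Summing the equalities $\dim L_v = h^0(\Gamma_v,\adzerorho)$ at the finite places away from $p$ and using that $[K:\QQ] = \sum_{v \mid p} [K_v:\QQ_p]$ since $p$ is unramified in $K$, the inequality collapses to \eqref{eq:finalequality}, which holds precisely because $K$ is totally real and $\rhobar$ is odd at every real place (this is exactly the numerical coincidence that motivates our hypotheses and, for $G = \GO_m$, forces $m \not\equiv 2 \pmod 4$ as noted in Remark~\ref{rmk:oddness}). Since $\rhobar$ is big relative to $\D_{S_0}$ by hypothesis, Proposition~\ref{prop:killingdualselmer} furnishes a finite enlargement $T \supset S_0$ and a deformation condition $\D_T$, obtained by adjoining Ramakrishna's condition $\Dram_v$ (with fixed similitude character) at the places $v \in T \setminus S_0$, such that
\[
H^1_{\D_T^\perp}(\Gamma_T,\adzerorho^*) = 0.
\]
By construction every local $\D_v$ in $\D_T$ is liftable over $\O$, and each of them respects the similitude character $\nu$.

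With this setup the lifting procedure is routine. Starting from $\rho_1 := \rhobar$, I inductively construct a compatible system of lifts $\rho_n : \Gamma_T \to G(\O/\m^n)$ subject to $\D_T$ as follows: given $\rho_{n-1}$, liftability of each $\D_v$ produces a local lift of $\rho_{n-1}|_{\Gamma_v}$ to $\O/\m^n$ satisfying $\D_v$ for every $v \in T$; the vanishing of $H^1_{\D_T^\perp}(\Gamma_T, \adzerorho^*)$ then allows Proposition~\ref{prop:localglobal} to promote these local lifts to a global lift $\rho_n$ subject to $\D_T$. Taking $\rho := \varprojlim_n \rho_n$ yields a continuous homomorphism $\rho : \Gamma_T \to G(\O)$ lifting $\rhobar$ with $\mu \circ \rho = \nu$ and ramified only at places in $T$. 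For each place $v \mid p$, $\rho|_{\Gamma_v}$ lies in the essential image of the Fontaine-Laffaille functor at every finite level, which by definition of $\DFL_v$ will imply that $\rho|_{\Gamma_v}$ is a lattice in a crystalline representation (hence geometric); the ramification is unrestricted only at finitely many places in $T$, so $\rho$ is geometric in the sense of Fontaine-Mazur.

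The main obstacle, and the one doing the real work, is the construction and analysis of the Fontaine-Laffaille deformation condition $\DFL_v$ in \S\ref{sec:fldeformation}: one must show that $\DFL_v$ is a genuine deformation condition compatible with fixing $\nu|_{\Gamma_v}$, that it is liftable, that its tangent space has the claimed dimension (which is what makes the tangent space inequality \eqref{eq:finalequality} collapse correctly), and that its characteristic-zero points are crystalline. The rest of the proof is bookkeeping: the Chebotarev-type argument inside Proposition~\ref{prop:killingdualselmer} is already done, and the inductive lift through the Proposition~\ref{prop:localglobal} machinery is formal once the local pieces are in place.
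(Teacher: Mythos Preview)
Your proposal is correct and follows essentially the same route as the paper: assemble the global deformation condition from the Fontaine-Laffaille condition at $v\mid p$, the minimally ramified condition at ramified $v\nmid p$, and the trivial (fixed-$\nu$) condition at archimedean $v$; verify the tangent space inequality via oddness; then apply Proposition~\ref{prop:killingdualselmer} followed by the inductive use of Proposition~\ref{prop:localglobal}. The only nitpick is that the identity $[K:\QQ]=\sum_{v\mid p}[K_v:\QQ_p]$ holds regardless of ramification, so the clause ``since $p$ is unramified in $K$'' is unnecessary there (unramifiedness is needed instead for Fontaine--Laffaille theory itself).
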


In particular, $\rho$ is geometric.  If we combine this with Proposition~\ref{prop:bigimage}, we obtain Theorem~\ref{thm:maintheorem}.

\begin{remark}
The same argument works for $G= \GL_n$ using local deformation conditions like those of \cite[\S2.4.1]{cht08} and \cite[\S2.4.4]{cht08}.  The argument for $\GL_2$ is a variant of the proof \cite[Theorem 1b]{ram02}.    But for $n>2$ it is impossible to satisfy the oddness hypothesis.  To obtain representations that are odd one would need to work with $\GL_n \rtimes \on{Out}(\GL_n)$ or related groups, as is done for $\mathcal{G}_n$ in \cite[Theorem 2.6.3]{cht08}.
\end{remark}

\begin{remark}
 For other groups, the method will produce lifts provided appropriate local conditions exist.  The deformation conditions we used are only available in full strength for symplectic and orthogonal groups.  An alternative deformation condition above $p$ is the ordinary deformation condition \cite[\S4.1]{patrikis15}, available for any $G$.  For ramified primes not above $p$, \cite[\S5]{boohermr} provides a deformation condition assuming a certain nilpotent centralizer is smooth and $\rhobar|_{\Gamma_v}$ is tamely ramified.
\end{remark}

\section{Fontaine-Laffaille Theory with Pairings} \label{sec:fontainelaffaille}

We begin by establishing some notation and reviewing the key results of Fontaine-Laffaille theory.  It was first studied by Fontaine and Laffaille \cite{fl82}, who introduced a contravariant functor relating torsion-crystalline representations and Fontaine-Laffaille modules.  For deformation theory, in particular compatibility with tensor products, it is necessary to use a covariant version, introduced in \cite{bk90}.  
We then study Fontaine-Laffaille modules with the extra data of a pairing by analyzing tensor products and duals, in preparation for studying the Fontaine-Laffaille deformation condition in \S\ref{sec:fldeformation}.  This analysis generalizes unpublished results in \cite{patrikis06}.

\subsection{Covariant Fontaine-Laffaille Theory} \label{sec:covariantfl}

Let $K = W(k')[\frac{1}{p}]$ for a perfect field $k'$ of characteristic $p$.   Let $W = W(k')$ and $\sigma : W \to W$ denote the Frobenius map.  Recall that a \emph{torsion-crystalline} representation with Hodge-Tate weights in $[a,b]$ is a $\ZZ_p[\Gamma_K]$-module $T$ for which there exists a crystalline representation $V$ with Hodge-Tate weights in $[a,b]$ and  $\Gamma_K$-stable lattices $\Lambda \subset \Lambda'$ in $V$ such that $\Lambda'/ \Lambda$ is isomorphic to $T$.  Our convention will be that the Hodge-Tate weight of the cyclotomic character is $-1$, which will work well with covariant functors.
The analogue of torsion-crystalline representations on the semilinear algebra side are certain classes of Fontaine-Laffaille modules:

\begin{defn}
A \emph{Fontaine-Laffaille} module is a $W$-module $M$ together with a decreasing filtration $\{M^i\}_{i \in \ZZ}$ of $M$ by $W$-submodules and a family of $\sigma$-semilinear maps $\{ \varphi^i_M : M^i \to M\}$ such that:
\begin{itemize}
 \item The filtration is separated and exhaustive: $M = \cup_{i \in \ZZ} M^i$ and $\cap_{i \in \ZZ} M^i= 0$.
 \item  For $m \in M^{i+1}$, $p \cdot \varphi^{i+1}_M(m) = \varphi^i_M(m)$.
\end{itemize}
Morphisms of Fontaine-Laffaille modules $f : M \to N$ are $W$-linear maps such that $f(M^i) \subset N^i$ and $f \circ \varphi^i_M = \varphi^i_N \circ f$ for all $i$.  The category of Fontaine-Laffaille modules is denoted $\MF_W$.

Let $\MF_{W,\tor}^{f}$ denote the full subcategory consisting of $M$ for which $M$ is of finite length (as a $W$-module) and for which $\sum_{i \in \ZZ} \varphi^i(M^i) = M$, and $\MF_{W,\tor}^{f,[a,b]}$ to be the full subcategory with the additional condition that $M^a=M$ and $M^{b+1}=0$.
\end{defn}

Maps in $\MF_{W,\tor}^{f}$ are strict for the filtration, and $\MF_{W,\tor}^{f}$ is an abelian category.

\begin{remark}
Jumps in the filtration will turn out to correspond Hodge-Tate weights, so the condition $M^a=M$ and $M^{b+1} = 0$ with $a \leq b$ corresponds to Hodge-Tate weights lying in $[a,b]$.  We call the set of jumps in the filtration the \emph{Fontaine-Laffaille weights}.
\end{remark}

We are also interested in a variant that allows non-torsion modules.  

\begin{defn} \label{defn:dieudonne}
 A \emph{filtered Dieudonn\'{e}} module $M$ is a Fontaine-Laffaille module (that is finite over $W$) for which the $M^i$ are direct summands of $M$ as $W$-modules and for which 
 \[
   \sum_{i \in \ZZ} \varphi^i(M^i) = M.
 \]
 Let $\DK$ denote the full subcategory of $\MF_W$ consisting of filtered Dieudonn\'{e} modules $M$ for which there exist integers $a$ and $b$ for which $M^a = M$, $M^{b+1}=0$, and $0 \leq b-a \leq p-2$.
\end{defn}

Note that $\D_K$ is also an abelian category.  For $M \in \MF_{W,\tor}^{f,[a,b]}$, it is automatic that $M^i$ is a direct summand of $M$.  There are natural notions of tensor products and duality.

\begin{defn} \label{defn:fltensor}
For Fontaine-Laffaille modules $M_1$ and $M_2$, define $M_1 \tensor{W} M_2$ to have underlying $W$-module $M_1 \tensor{W} M_2$, filtration given by $(M_1 \tensor{W} M_2)^n = \sum_{i+j =n} M_1^i \tensor{W} M_2^j$, and maps $\varphi_{M_1 \tensor{W} M_2}^n $ induced by the $\varphi_{M_1}^i$ and $\varphi_{M_2}^j$.
\end{defn}

\begin{defn}\label{defn:mfdual}
For $M \in \MF_{W,\tor}^f$, define $M^*$ to be $\Hom_W(M,K/W)$ with the dual filtration $$(M^*)^i := \Hom_W(M/M^{1-i}, K/W)$$ and with $\varphi^i_{M^*}$ characterized for $f \in (M^*)^i$ and $m \in M^j$ by $\varphi^i_{M^*}(f) (\varphi^j(m))=0$ when $j \geq 1-i$ and by $\varphi^i_{M^*}(f) (\varphi^j(m)) = f(p^{-i-j}m)$ when $j < 1-i$ (in which case $-i-j \geq 0$).
\end{defn}

\begin{lem} \label{lem:mfdual}
There is a unique $(\varphi^i_{M^*})$ satisfying these constraints.  Using it, $M^*$ is an object of $\MF_{W,\tor}^f$. Then $M \mapsto M^*$ is a contravariant functor from $\MF^f_{W,\tor}$ to itself, and $M \simeq M^{* *}$ naturally in $M$.
\end{lem}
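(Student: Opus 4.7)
The proof is essentially a verification exercise whose substance sits in the first step. The structural input from $\MF^f_{W,\tor}$ that makes everything go is the condition $\sum_j \varphi^j_M(M^j) = M$: every element of $M$ is a $W$-linear combination of elements of the form $\varphi^j_M(m)$ with $m \in M^j$, so any prospective $\varphi^i_{M^*}(f) \in \Hom_W(M, K/W)$ is uniquely determined by its prescribed values on such elements. My plan is to address the four assertions in sequence: well-definedness and uniqueness of $\varphi^i_{M^*}$, that $M^*$ lies in $\MF^f_{W,\tor}$, functoriality, and bi-duality.

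The core task is to show consistency of the prescription: if $\sum_j \varphi^j_M(m_j) = 0$ in $M$ with $m_j \in M^j$, then $\sum_{j < 1-i} f(p^{-i-j} m_j) = 0$ in $K/W$ (the terms with $j \geq 1-i$ contribute zero automatically, since then $m_j \in M^{1-i} \subset \ker f$). I would handle this by induction on the length of $M$, using the abelian-category structure of $\MF^f_{W,\tor}$ from \cite{fl82}: the relation $p \varphi^{j+1}_M = \varphi^j_M$ on $M^{j+1}$ reconciles the $p^{-i-j}$-factors across adjacent filtration steps, while simple objects (where the Fontaine-Laffaille weight is concentrated in a single degree) are immediate by direct calculation. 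Given well-definedness, the $\sigma$-semilinearity of $\varphi^i_{M^*}$ and the compatibility $p \varphi^{i+1}_{M^*}|_{(M^*)^{i+1}} = \varphi^i_{M^*}|_{(M^*)^{i+1}}$ are immediate by evaluating on $\varphi^j_M(m)$ and tracking the powers of $p$; the decreasing filtration and length constraints on $M^*$ are clear; and $\sum_i \varphi^i_{M^*}((M^*)^i) = M^*$ similarly reduces to the case of simple objects.

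Functoriality is routine: $\alpha : M \to N$ induces $\alpha^* = (-) \circ \alpha : N^* \to M^*$, which respects the filtration because $\alpha(M^{1-i}) \subset N^{1-i}$, and commutes with the $\varphi^i$'s because $\alpha \circ \varphi^j_M = \varphi^j_N \circ \alpha$ makes both $\alpha^* \circ \varphi^i_{N^*}$ and $\varphi^i_{M^*} \circ \alpha^*$ evaluate to the same expression on $\varphi^j_N(\alpha(m))$ via the defining formulas. For the natural bi-duality, the evaluation $\iota : M \to M^{**}$ defined by $m \mapsto (f \mapsto f(m))$ is a $W$-module isomorphism by Matlis duality applied to the finite-length $W$-module $M$, with $K/W$ the injective hull of $k'$. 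Filtrations match: $\iota(m) \in (M^{**})^i = \Hom_W(M^*/(M^*)^{1-i}, K/W)$ iff $f(m) = 0$ for all $f \in (M^*)^{1-i} = \Hom_W(M/M^i, K/W)$, iff $m \in M^i$ (using injectivity of $K/W$ as a cogenerator). Frobenii match by applying the defining formula twice and unwinding the result.

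The hardest point is the joint well-definedness of $\varphi^i_{M^*}$ and the surjectivity $\sum_i \varphi^i_{M^*}((M^*)^i) = M^*$; neither is obvious from inspection, because the maps $\varphi^j_M$ have large kernels and the same element of $M$ can be written as $\sum_j \varphi^j_M(m_j)$ in many ways, so one must show the prescription produces the same answer regardless of presentation. An alternative to the length induction above would be to recognize $M^*$ as an internal Hom, encoded by a natural pairing $M \otimes_W M^* \to K/W$ in the tensor category of filtered Frobenius modules of Definition~\ref{defn:fltensor}, where well-definedness becomes an adjunction property and duality is self-dual by construction.
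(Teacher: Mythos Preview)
Your reduction of well-definedness to the consistency check ``if $\sum_j \varphi^j_M(m_j) = 0$ then the prescribed sum of values is $0$'' is exactly right, and your instinct that the relation $p\varphi^{j+1}_M = \varphi^j_M|_{M^{j+1}}$ is what makes this work is also correct. But the induction on length you propose has two problems. First, your description of simple objects as having weight ``concentrated in a single degree'' is wrong: over algebraically closed residue field the simple objects of $\MF^f_{W,\tor}$ are the modules $M(h;i)$ of \cite[\S4]{fl82}, which are $h$-dimensional with weights prescribed by an arbitrary function $i:\ZZ/h\ZZ \to \ZZ$ of minimal period $h$, so the base case is not a one-line check. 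Second, you give no indication of how the inductive step would run: given $0 \to M' \to M \to M'' \to 0$ and a relation $\sum_j \varphi^j_M(m_j) = 0$ in $M$, there is no evident way to decompose the relation into pieces living in $M'$ and $M''$ on which the inductive hypothesis applies.

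The paper's approach (spelled out for the closely analogous construction in Lemma~\ref{lem:flduality1}) avoids both issues by invoking the exact sequence of \cite[Lemme 1.7]{fl82}
\[
 0 \to \bigoplus_{r=a+1}^b M^r \to \bigoplus_{r=a}^b M^r \xrightarrow{\sum \varphi^r_M} M \to 0,
\]
where the first map sends $(m_r)$ to $(pm_r - m_{r+1})$. This says precisely that the relations $\sum_j \varphi^j_M(m_j) = 0$ are $W$-linearly generated by the elementary ones $\varphi^{s-1}_M(-m) + \varphi^s_M(pm) = 0$ for $m \in M^s$. So one only has to check that the prescribed map $\bigoplus_r M^r \to K/W$ kills each such generator, which is a direct case split on whether $s < 1-i$, $s = 1-i$, or $s > 1-i$: in the first two cases the $p^{-i-j}$ factors telescope, and in the boundary case $s = 1-i$ one uses that $f$ kills $M^{1-i}$. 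No induction and no classification of simples is needed. The surjectivity $\sum_i \varphi^i_{M^*}((M^*)^i) = M^*$ is handled similarly (compare Lemma~\ref{lem:flduality}). Your treatment of functoriality and of bi-duality via Matlis duality for finite-length $W$-modules is fine.
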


\begin{proof}
Uniqueness is immediate, while existence is checked in \cite[\S7.5]{conrad94}.  We will use a similar argument in Lemma~\ref{lem:flduality1} and Lemma~\ref{lem:flduality}.
\end{proof}

To connect Fontaine-Laffaille modules and torsion-crystalline representations, we use the period ring $\Acris$.  A convenient reference is \cite[\S2.2,2.3]{hattori}, which collects together previous work and reviews $\Acris$ for the purposes of constructing the contravariant and covariant Fontaine-Laffaille functors.  For our purposes, what is important is that $\Acris$ is a $W$-algebra that has an action of $\Gamma_K$, a $\sigma$-semilinear endomorphism $\varphi$ and a filtration $\{\Fil^i \Acris\}$.  In particular, it carries both an action of $\Gamma_K$ and the structure of a Fontaine-Laffaille module.   We use $\Acris$ to define an analogue of $V_\cris$:

\begin{defn} \label{defn:tcristor}
For $M \in \MF_{W,\tor}^{f,[2-p,1]}$, define 
\[
 T_\cris(M) := \ker \left(1 - \varphi^0_{\Acris \otimes M} : \Fil^0(\Acris \otimes M) \to \Acris \otimes M \right).
\]
\end{defn}

A small argument (see \cite[\S2.2]{hattori}) also shows that 
\[
 A_{\cris,\infty} := \Acris \tensor{W} K/W = \directlimit_n \Acris/p^n \Acris \in \MF_{W,\tor}^{f, [0,p-1]}.
\]
This allows us to define a contravariant functor from $\MF_{W,\tor}^{f, [0,p-1]}$ to $\Rep_{\ZZ_p}(\Gamma_K)$ by
\[
 T^*_{\cris}(M) := \Hom_{\MF_W}(M,A_{\cris,\infty}).
\]
This functor agrees with the functor $U_S$ considered by Fontaine and Laffaille \cite[Remark 2.7]{hattori}.

\begin{remark} \label{remark:tcrisstar}
If $M \in \MF_{W,\tor}^{f, [2-p,1]}$ is $p^n$-torsion, then
\begin{align*}
 T^*_{\cris}(M^*) &= \Hom_{\MF_W}(M^*,A_{\cris}/p^n \Acris) \\
 & \simeq \ker \left(1 - \varphi^0_{\Acris \otimes M} : \Fil^0(\Acris \otimes M) \to \Acris \otimes M \right) \\
 & =     T_\cris(M)
\end{align*}
which is how Fontaine and Laffaille's results about $T_\cris^*$ imply results about $T_\cris$. 
\end{remark}

We can extend $T_\cris$ to $\DK$ by defining an analogue of Tate-twisting.  

\begin{defn} 
 For $M \in \DK$ and an integer $s$, define $M(s)$ to have the same underlying $W$-module with filtration $M(s)^i = M^{i-s}$ and maps $\varphi_{M(s)}^i = \varphi_M^{i-s}$.  We then define $T_\cris(M)$ for a $M \in \DK$ that satisfies $M^a=M$ and $M^{b+1}=0$ for some $a, b \in \ZZ$ with $b-a \leq p-2$ by
 \[
 T_\cris(M) := T_\cris(M(-(b-1)))(b-1)
\]
\end{defn}

This agrees with Definition~\ref{defn:tcristor} on $\MF_{W,\tor}^{f,[2-p,1]}$ and also extends the definition to $\MF_{W,\tor}^{f,[a,b]}$ when $b-a \leq p-2$.

\begin{fact}\label{fact:tcris}
We have:
\begin{enumerate}
 \item The covariant functor $T_\cris : \DK \to \Rep_{\ZZ_p}[\Gamma_K]$ is well-defined, and is exact and fully faithful.  \label{tcris:1}
 
 \item  For $M \in \DK$, $T_\cris(M) = \inverselimit T_\cris(M/p^n M)$.\label{tcris:2}
 
 \item The essential image of $T_\cris : \MF_{W,\tor}^{f,[a,b]} \to \Rep_{\ZZ_p}[\Gamma_K]$ is stable under formation of sub-objects and quotients.\label{tcris:3}
 
 \item  For $M \in \MF_{W,\tor}^{f,[a,b]}$, the length of $M$ as a $W$-module is equal to the length of $T_\cris(M)$ as a $\ZZ_p$-module.  \label{tcris:4}
 
 \item  For $M \in \DK$, the $\Gamma_K$-representation $T_\cris(M) [\frac{1}{p}]$ is crystalline.\label{tcris:5}
 
 \item  Any torsion-crystalline $\FF_p[\Gamma_K]$-module $\Vbar$ whose Hodge-Tate weights lie in an interval of length $p-2$ is in the essential image of $T_\cris$.\label{tcris:6}
\end{enumerate}
\end{fact}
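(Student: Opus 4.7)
The plan is to reduce everything to the classical results of \cite{fl82} about the contravariant functor $T_\cris^*$, using the duality identification $T_\cris(M) = T^*_\cris(M^*)$ from Remark~\ref{remark:tcrisstar} together with the involutive duality of Lemma~\ref{lem:mfdual}. First I would dispatch the torsion case $M \in \MF_{W,\tor}^{f,[2-p,1]}$: since $M \mapsto M^*$ restricts to an anti-equivalence between $\MF_{W,\tor}^{f,[2-p,1]}$ and $\MF_{W,\tor}^{f,[0,p-1]}$ (check this directly from Definition~\ref{defn:mfdual}: the dual filtration sits in the shifted range), the Fontaine--Laffaille theorem for $T_\cris^*$ (exactness, full faithfulness, length-preservation, stability of essential image under subobjects and quotients, together with essential surjectivity onto the appropriate torsion-crystalline representations) transports across duality to the same statements for $T_\cris$ on $\MF_{W,\tor}^{f,[2-p,1]}$. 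This handles (\ref{tcris:3}) and (\ref{tcris:4}) (with (\ref{tcris:1}) partially), on this range.

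Next I would extend by Tate twisting: for $M \in \MF_{W,\tor}^{f,[a,b]}$ with $b-a \leq p-2$, the module $M(-(b-1))$ lands in $\MF_{W,\tor}^{f,[a-(b-1),1]} \subset \MF_{W,\tor}^{f,[2-p,1]}$, and the Galois twist $(b-1)$ on the output corrects the cyclotomic discrepancy. One must check independence of the choice of witnesses $(a,b)$, which reduces to compatibility of $T_\cris$ with a single shift $M \leadsto M(1)$; this compatibility comes from the relation $p \varphi^{i+1} = \varphi^i$ in the Fontaine--Laffaille structure and the fact that $\Acris$ carries the Tate twist internally via the usual element $t \in \Fil^1 \Acris$ with $\varphi(t) = pt$. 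Passing to the Dieudonné case, I would define $T_\cris(M) := \varprojlim_n T_\cris(M/p^n M)$; this is (\ref{tcris:2}) by definition (provided one first checks that the transition maps are surjective, using exactness on torsion). Exactness and full faithfulness in (\ref{tcris:1}) then follow from the torsion case by a standard Mittag--Leffler argument, and (\ref{tcris:4}) is immediate.

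For (\ref{tcris:5}), I would construct an explicit comparison isomorphism: tensor $T_\cris(M)$ with $\Bcris$ and compare to $\Bcris \tensor{W} M[\tfrac{1}{p}]$ using the tautological map from Definition~\ref{defn:tcristor}, showing it is an isomorphism of filtered $(\varphi,\Gamma_K)$-modules. After inverting $p$ this is the usual Fontaine comparison, and it identifies $D_\cris(T_\cris(M)[\tfrac{1}{p}]) \simeq M[\tfrac{1}{p}]$. For (\ref{tcris:6}), given torsion-crystalline $\Vbar = \Lambda'/\Lambda$ with $\Lambda \subset \Lambda' \subset V$ and Hodge--Tate weights in $[a,b]$, $b-a \leq p-2$, I would produce Fontaine--Laffaille modules $M, M' \in \DK$ with $T_\cris(M) = \Lambda$ and $T_\cris(M') = \Lambda'$ (using the classical $T_\cris^*$ version of essential surjectivity for lattices, transported through duality and Tate twist), and then exactness of $T_\cris$ on $\DK$ produces $M'/M \in \MF_{W,\tor}^{f,[a,b]}$ mapping to $\Vbar$.

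The main obstacle is bookkeeping rather than mathematical content: one must carefully track the interplay of (i) the Tate-twist convention that makes the Hodge--Tate weight of the cyclotomic character equal to $-1$, (ii) the shift $[2-p,1] \leftrightarrow [0,p-1]$ induced by dualization, and (iii) the reindexing $\varphi^i \leftrightarrow \varphi^{i-1}$ caused by the relation $p\varphi^{i+1} = \varphi^i$. Any inconsistency in these conventions propagates through the whole dictionary, so the cleanest route is to fix conventions once in Definition~\ref{defn:tcristor}, verify duality compatibility $T_\cris(M) = T_\cris^*(M^*)$ on the overlap range once and for all, and only then deduce the six statements formally.
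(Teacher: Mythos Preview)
Your duality-based strategy is how the paper handles (\ref{tcris:3}) and (\ref{tcris:6}), so that part is aligned. But your treatment of (\ref{tcris:2}) has a gap. In the paper, $T_\cris$ is already defined on all of $\DK$ \emph{before} Fact~\ref{fact:tcris}: the kernel description $T_\cris(M) = \ker(1 - \varphi^0_{\Acris \otimes M})$ on $\Fil^0(\Acris \otimes M)$ makes sense for any finite $W$-module $M$ with weights in $[2-p,1]$, torsion or not, and the Tate-twist extension then covers all of $\DK$. So (\ref{tcris:2}) is a nontrivial assertion, not a definition. The paper proves it (in the appendix) by showing that the natural map $\Acris \otimes M \to \varprojlim_n (\Acris \otimes M/p^n M)$ is an isomorphism compatible with filtration, Frobenius, and Galois action, and then taking kernels of $1-\varphi^0$; the filtration compatibility is the substantive point and uses that each $M^i$ is a $W$-direct summand of $M$. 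If you instead redefine $T_\cris$ on $\DK$ as the inverse limit, you still owe a proof that this agrees with the kernel definition, which is exactly the computation you have bypassed.

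Two smaller differences. For (\ref{tcris:1}), (\ref{tcris:4}), (\ref{tcris:5}) the paper simply cites \cite[Theorem~4.3]{bk90}, where these are proved directly for the covariant functor, rather than deducing them from \cite{fl82} via duality as you propose; your route is viable but longer, and for (\ref{tcris:5}) your comparison-isomorphism sketch is essentially what \cite{bk90} does anyway. For (\ref{tcris:6}), since $\Vbar$ is already an $\FF_p[\Gamma_K]$-module, the paper invokes the torsion anti-equivalence between $\MF_{W,\tor}^{f,[0,r]}$ and torsion-crystalline representations with weights in $[-r,0]$ directly (transported via Remark~\ref{remark:tcrisstar}), rather than realizing $\Vbar$ as a quotient of lattices and invoking an integral essential-surjectivity statement; your route works but is more elaborate than needed.
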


This is a modified version of \cite[Theorem 4.3]{bk90}.  
The first, fourth, and fifth statements are explicitly included in \cite[Theorem 4.3]{bk90}.  The second is checked in unpublished notes by Conrad \cite[\S7.2]{conrad94}; as we do not know a reference in the literature we include the argument in an appendix to the arXiv version of this article.  The claim about the essential image can be deduced from the analogous statement for $T_{\cris}^*$ using Remark~\ref{remark:tcrisstar}.  That statement is used in earlier work and checked in \cite[\S9.3]{conrad94}, and is explicitly stated and proven in \cite[Theorem 2.9(iv), \S4]{hattori}.
The last statement follows from relating $T_\cris$ to $T_\cris^*$ on $p$-torsion objects and the fact that for $r \in \{0,1,\ldots, p-2\}$, the functor $T_{\cris}^*$ induces an anti-equivalence between $\MF_{W,\tor}^{f, [0,r]}$ and the full subcategory of $\Rep_{\ZZ_p}(\Gamma_K)$ consisting of torsion-crystalline $\Gamma_K$ representations with Hodge-Tate weights in $[-r,0]$ (see for example \cite[Corollary 2.13]{hattori}).

\begin{remark} \label{rmk:weights}
 Our convention that the Hodge-Tate weight of the cyclotomic character is $-1$ makes the Fontaine-Laffaille weights and Hodge-Tate weights match under $T_\cris$. 
\end{remark}

\subsection{Tensor Products and Freeness} \label{sec:tensorfree}
Definition~\ref{defn:fltensor} defined a tensor product for Fontaine-Laffaille modules.  If $M_1 \in \MF_{W,\tor}^{f,[a_1,b_1]}$ and $M_2 \in \MF_{W,\tor}^{f,[a_2,b_2]}$, it is straightforward to verify that $M_1 \otimes M_2$ is an object of $\MF_{W,\tor}^{f,[a_1+a_2, b_1 + b_2]}$.  
The functor $T_\cris$ is compatible with tensor products in the following sense:

\begin{fact} \label{fact:tensor}
 Suppose that $M_1$, $M_2$, and $M_1 \otimes M_2$ each has Fontaine-Laffaille weights in an interval of length at most $p-2$.  Then the natural map $T_\cris(M_1) \tensor{\ZZ_p} T_\cris(M_2) \to T_\cris(M_1 \otimes M_2)$ is an isomorphism.
\end{fact}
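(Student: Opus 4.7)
The natural map is built from the multiplication on $\Acris$, which is a morphism of filtered $\varphi$-modules $\Acris \tensor{\ZZ_p} \Acris \to \Acris$ and so yields a map
\[
 (\Acris \tensor{W} M_1) \tensor{\Acris} (\Acris \tensor{W} M_2) \to \Acris \tensor{W} (M_1 \tensor{W} M_2).
\]
After a Tate twist to put $M_1$, $M_2$, and $M_1 \tensor{W} M_2$ in $\MF_{W,\tor}^{f,[2-p,1]}$, this sends tensor products of $\varphi^0$-fixed elements of $\Fil^0$ into $\varphi^0$-fixed elements of $\Fil^0$, yielding the desired comparison $T_\cris(M_1) \tensor{\ZZ_p} T_\cris(M_2) \to T_\cris(M_1 \tensor{W} M_2)$. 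The first reduction is to the case where $M_1$ and $M_2$ are torsion: by Fact~\ref{fact:tcris}(\ref{tcris:2}) and the evident identification $(M_1 \tensor{W} M_2)/p^n \simeq (M_1/p^n M_1) \tensor{W} (M_2/p^n M_2)$ in $\MF_W$, the statement for general $M_1, M_2 \in \DK$ follows from the torsion case by passage to the inverse limit.

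In the torsion case, I would devissage along the filtrations $M_i \supset pM_i \supset p^2 M_i \supset \ldots$ to reduce to the situation where $M_1$ and $M_2$ are both $p$-torsion, i.e.\ $k'$-vector spaces. The reduction uses exactness of $T_\cris$ from Fact~\ref{fact:tcris}(\ref{tcris:1}), together with a five-lemma argument applied to the short exact sequences obtained by tensoring $0 \to pM_i \to M_i \to M_i/pM_i \to 0$ with the other factor. One must verify that these short exact sequences remain exact in $\MF_W$ after tensoring; this rests on checking that the relevant maps stay strict for the filtration, where the hypothesis that all three of $M_1$, $M_2$, and $M_1 \tensor{W} M_2$ have weights in an interval of length $\leq p-2$ is used to keep the resulting objects in the range where $T_\cris$ is fully faithful.

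Once reduced to the $p$-torsion case, by Fact~\ref{fact:tcris}(\ref{tcris:4}) both sides of the proposed map have $\FF_p$-dimension equal to $\dim_{k'} M_1 \cdot \dim_{k'} M_2$, so it suffices to establish surjectivity (equivalently injectivity). Here I would invoke the classical compatibility of the contravariant Fontaine-Laffaille functor $T_\cris^*$ with tensor products proved in \cite{fl82}, transported to $T_\cris$ via the dictionary of Remark~\ref{remark:tcrisstar} (which exchanges $M$ with $M^*$, and turns tensor products into tensor products of duals). The main obstacle is controlling the weights through the devissage and the passage from $T_\cris^*$ to $T_\cris$: subquotients of $M_i$ retain the weight constraint, but to ensure that the compared tensor product remains within the range where $T_\cris^*$ is an anti-equivalence one needs precisely the hypothesis that $M_1 \tensor{W} M_2$ has Fontaine-Laffaille weights in an interval of length $\leq p-2$.
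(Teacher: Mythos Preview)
Your overall shape---construct the map from $\Acris$-multiplication, reduce by d\'evissage, and compare lengths---is in the right spirit, but there are two genuine gaps.

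First, the d\'evissage step does not work as written. Tensoring the exact sequence $0 \to pM_i \to M_i \to M_i/pM_i \to 0$ with $M_j$ over $W$ need not remain exact on the left: $\on{Tor}_1^W(M_j, M_i/pM_i) \simeq M_j[p]$ is typically nonzero, so the five-lemma is unavailable. The paper's appendix explicitly flags this (``the only subtle point \ldots\ is that tensor products are only right exact'') and proceeds differently: from right-exactness alone, a diagram chase shows the middle comparison map is \emph{surjective} once the outer ones are isomorphisms, and then the length identity $\lg_W(M)=\lg_{\ZZ_p}(T_\cris(M))$ upgrades surjectivity to bijectivity. Your length observation is exactly the right ingredient, but it belongs inside the inductive step of the d\'evissage, not after it.

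Second, the base case you cite is not actually proved in \cite{fl82}. The tensor compatibility for $T_\cris^*$ appears there only as Remarques~6.13(b), stated without proof (and, as the paper notes right after Fact~\ref{fact:tensor}, missing the $p$-torsion hypothesis needed even to define the comparison map). The paper does not rely on that remark: it d\'evissages further, first passing to an algebraically closed residue field and then to \emph{simple} objects $M(h;i)$, and establishes the isomorphism there by an explicit computation with the classification of simple Fontaine--Laffaille modules and the equations of \cite[\S5]{fl82}. So even after repairing the d\'evissage, your argument is incomplete until you supply a proof for the simple (or at least $p$-torsion) case rather than a citation.
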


The natural map comes from the multiplication of $A_{\cris}$.  To check this map is an isomorphism, one first checks it on simple $M_1$ and $M_2$ using Fontaine and Laffaille's classification of simple Fontaine-Laffaille modules when the residue field is algebraically closed.  Then one uses a d\'{e}vissage argument to reduce to the general case.  This argument comes from \cite{conrad94}, but as that reference is not publicly available, we sketch the argument in an appendix to the arXiv version of this article.

\begin{remark}
An analogue of this compatibility is stated in \cite[Remarques 6.13(b)]{fl82} for the contravariant functor $T^*_{\cris}$: the natural map
\[
 T_\cris^*(M_1) \tensor{\ZZ_p} T_\cris^*(M_2) \to T_\cris(M_1 \otimes M_2)
\]
is an isomorphism.  This statement is missing a $p$-torsion hypothesis, since there is no natural map in general.  When $M_1$ and $M_2$ are $p$-torsion, we have
\[
 T_\cris^*(M_1) = \Hom_{\MF_W}(M_1,A_{\cris,\infty}) = \Hom_{\MF_W}(M_1,A_{\cris}/p A_{\cris})
\]
and likewise for $M_2$.  Then multiplication on $A_{\cris}/p A_{\cris}$ gives a natural map $$T_\cris^*(M_1) \otimes T_\cris^*(M_2) \to T_\cris^*(M_1 \otimes M_2)$$ which can be checked to be an isomorphism by d\'{e}vissage.  But $A_{\cris,\infty}$ is not a ring, so there is no natural map without a $p$-torsion hypothesis on $M_1$ and $M_2$.  This explains why it is crucial to work with the covariant functor $T_\cris$.  
\end{remark}

For $M \in \MF_{W,\tor}^{f,[a,b]}$, if $V=T_\cris(M)$ has ``extra structure'' then so does $M$.  For example, if $V$ were a deformation of a residual representation over a finite field $k$, $V$ would be an $\O = W(k)$-module.  As $T_\cris$ is covariant and fully faithful, it is immediate that $M$ is naturally an $\O$-module.  The actions of $\ZZ_p$ on $M$ via the embeddings into $\O$ and $W=W(k')$ are obviously compatible.  We denote the Frobenius on $\O$ by $\sigma$.

Recall that Galois representations of $\Gamma_K$ defined over a finite extension $L$ of $\QQ_p$ can be viewed as $\QQ_p$-vector spaces with the additional action of $L$.  
Assume there exists an embedding of $K$ into $L$ over $\QQ_p$, so $L$ splits $K$ over $\QQ_p$.  Such Galois representations are modules over $L \tensor{\QQ_p} K \simeq \prod_{\tau : K \into L} L_\tau$ via $a \otimes b \mapsto (a \tau(b))$.  For each $\QQ_p$-embedding $\tau$, there is a collection of Hodge-Tate weights.  We will generalize this structure to the setting of Fontaine-Laffaille modules.  

Assume $k'$ is finite, and more specifically that $k'$ embeds in $k$, so $\O[ \frac{1}{p}]$ splits the finite unramified $K$ over $\QQ_p$.  Hence
\[
 \O \tensor{\ZZ_p} W \simeq \prod_{\tau : W \into \O} \O_\tau
\]
as $\O$-algebras, where $\tau$ varies over $\ZZ_p$-embeddings of $W$ into $\O$ and $W$ acts on $\O_\tau := \O$ via $\tau$.  We likewise obtain a decomposition of the $\O \tensor{\ZZ_p} W$-module $M$ as
\[
 M = \bigoplus_{\tau : W \into \O} M_\tau.
\]
Note that 
\[
 \Hom_{\O \tensor{\ZZ_p} W} (M,M') = \bigoplus_{\tau : W \into \O} \Hom_\O(M_\tau,M'_\tau).
\]

\begin{lem} \label{lem:ostructure}
If $V = T_\cris(M)$ is equipped with a $\Gamma_K$-equivariant $\O$-module structure then for $M_\tau^i := M_\tau \cap M^i$ we have
\[
 M^i = \bigoplus_{\tau : W \into \O} M^i_\tau
\]
and furthermore the $\sigma$-semilinear map $\varphi^i_M |_{M^i_\tau} : M_\tau^i \to M$ factors through $M_{\sigma \tau}$.  The length of $M$ as an $\O$-module equals the length of $V$ as an $\O$-module multiplied by $[K:\QQ_p]$.
\end{lem}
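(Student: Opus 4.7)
The plan is to transfer the $\O$-action on $V = T_\cris(M)$ back to $M$ using the full faithfulness in Fact~\ref{fact:tcris}, decompose $M$ via the primitive idempotents of $R := \O \otimes_{\ZZ_p} W$, and then combine $\O$-linearity with $\sigma$-semilinearity to locate the shift in the $\tau$ index made by $\varphi^i_M$.

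First, by full faithfulness of $T_\cris$, the $\O$-action on $V$ lifts uniquely to an action of $\O$ on $M$ by morphisms in $\MF_W$, and such morphisms are $W$-linear, strict for the filtration, and commute with every $\varphi^i_M$. Thus $M$ becomes an $R$-module, each $M^i$ is an $R$-submodule, and each $\varphi^i_M$ is $\O$-linear while remaining $\sigma$-semilinear for the $W$-action. The embedding $k' \hookrightarrow k$ gives $R \simeq \prod_\tau \O_\tau$ with primitive idempotents $e_\tau$, so $M = \bigoplus_\tau M_\tau$ with $M_\tau := e_\tau M$. Since each $e_\tau$ preserves $M^i$, we immediately obtain $M^i = \bigoplus_\tau e_\tau M^i = \bigoplus_\tau (M_\tau \cap M^i) = \bigoplus_\tau M^i_\tau$.

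For the Frobenius shift, pick $m \in M^i_\tau$ and $w \in W$. The identity $e_\tau(1 \otimes w) = (\tau(w) \otimes 1) e_\tau$ in $R$ shows that on $M_\tau$ the action of $1 \otimes w$ coincides with the action of $\tau(w) \otimes 1$. Applying $\varphi^i_M$ to this single element via $\O$-linearity and via $\sigma$-semilinearity gives $(\tau(w) \otimes 1)\varphi^i_M(m) = (1 \otimes \sigma(w))\varphi^i_M(m)$. Projecting to the summand $M_{\tau'}$, the scalar $\tau(w) - \tau'(\sigma(w)) \in \O$ annihilates the $\tau'$-component of $\varphi^i_M(m)$ for every $w \in W$. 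Taking $w$ to be the Teichm\"uller lift of a generator $\bar\theta$ of $k'$ over $\FF_p$, the elements $\tau(w)$ and $\tau'(\sigma(w))$ reduce mod $\m_\O$ to the images $\tau(\bar\theta)$ and $(\tau' \circ \sigma)(\bar\theta)$, which are distinct roots in $k$ of the minimal polynomial of $\bar\theta$ whenever $\tau \neq \tau' \circ \sigma$. In that case the scalar is a unit in $\O$ and forces the $\tau'$-component to vanish; the only surviving component is $\tau'$ with $\tau' \circ \sigma = \tau$, which is the embedding denoted $\sigma \tau$ in the statement.

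For the length equality, Fact~\ref{fact:tcris} gives $\on{len}_W(M) = \on{len}_{\ZZ_p}(V)$. Since $\O$ is free of rank $[k:k']$ over $W$, any finite-length $\O$-module $X$ satisfies $\on{len}_W(X) = [k:k']\,\on{len}_\O(X)$, and since $\O$ has residue field $k$ of degree $[k:\FF_p]$ over $\FF_p$, $\on{len}_{\ZZ_p}(X) = [k:\FF_p]\,\on{len}_\O(X) = [k:k'][k':\FF_p]\,\on{len}_\O(X)$. Applying these to $M$ and $V$ and dividing out $[k:k']$ yields $\on{len}_\O(M) = [k':\FF_p]\,\on{len}_\O(V) = [K:\QQ_p]\,\on{len}_\O(V)$. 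The main obstacle is disentangling the two commuting $\ZZ_p$-algebra structures on $M$---the $W$-structure intrinsic to $\MF_W$ and the $\O$-structure coming from $T_\cris$---but once they are merged into a single $R$-action and split by idempotents, each of the three assertions follows by direct manipulation.
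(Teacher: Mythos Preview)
Your proof is correct and follows precisely the route the paper has in mind: the paper's own proof is just ``the first statement is straightforward, and the second is bookkeeping using Fact~\ref{fact:tcris}\eqref{tcris:4}'', and you have supplied exactly that straightforward idempotent decomposition together with the semilinearity/$\O$-linearity comparison for the Frobenius shift, and exactly that length bookkeeping. One small point to double-check is the direction of the shift: your computation gives $\tau'\circ\sigma=\tau$, i.e.\ $\tau'=\tau\circ\sigma^{-1}$, so make sure this agrees with how the paper is using the symbol ``$\sigma\tau$'' (the paper does not spell out the convention, and either $\tau\circ\sigma$ or $\tau\circ\sigma^{-1}$ appears in the literature).
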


\begin{proof}
 The first statement is straightforward, and the second is bookkeeping using Fact~\ref{fact:tcris}\eqref{tcris:4}.
\end{proof}
%
%
%
%

We also have a result about freeness.

\begin{lem} \label{lem:rfree}
Let $V = T_\cris(M)$ and $R$ be an artinian coefficient $\O$-algebra $R$ with residue field $k$.  Then $M$ is a $R$-module object in $\MF_{W,\tor}^{f,[a,b]}$ if and only if $V$ is an $R$-module object in $\Rep_{\ZZ_p}[\Gamma_K]$.  In that case, $V$ is a free as an $R$-module if and only if $M$ is free as an $R$-module.  When $M$ is free, all of the $M_\tau^i$ are free $R$-direct summands.  All of the $M_\tau$ have the same rank.
\end{lem}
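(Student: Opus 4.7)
The plan is to combine the full faithfulness and length-compatibility of $T_\cris$ with Nakayama's lemma, and to exploit strong divisibility of Fontaine--Laffaille modules to handle the $\tau$-components. The main obstacle will be the final assertion that all $M_\tau$ have the same $R$-rank; I intend to address it by reducing modulo $\m_R$ and producing a semilinear isomorphism out of the associated graded.

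The equivalence of $R$-module structures on $M$ and $V$ is immediate from full faithfulness of $T_\cris$ (Fact~\ref{fact:tcris}(\ref{tcris:1})): such a structure amounts to a ring homomorphism from $R$ into $\End_{\Gamma_K}(V) = \End_{\MF_W}(M)$. For the equivalence of freeness, exactness of $T_\cris$ yields $V/\m_R V = T_\cris(M/\m_R M)$, and Fact~\ref{fact:tcris}(\ref{tcris:4}) combined with Lemma~\ref{lem:ostructure} gives
\[
\dim_k(M/\m_R M) = [K:\QQ_p]\cdot\dim_k(V/\m_R V), \qquad \text{length}_\O M = [K:\QQ_p]\cdot\text{length}_\O V.
\]
Over a local Artinian ring $R$, a finitely generated module $N$ is free of rank $d$ if and only if $\dim_k(N/\m_R N) = d$ and $\text{length}_\O N = d\cdot\text{length}_\O R$; applying this to both $M$ and $V$ converts freeness of $V$ of rank $r$ to freeness of $M$ of rank $r[K:\QQ_p]$, and vice versa.

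Now assume $M$ is $R$-free. The ring decomposition $R\otimes_{\ZZ_p} W \simeq \prod_\tau R_\tau$ (from $k' \hookrightarrow k$) gives $M = \bigoplus_\tau M_\tau$ as $R$-modules, so each $M_\tau$ is a direct summand of a free $R$-module, hence $R$-free. Using that $M^i$ is a $W$-direct summand of $M$ (as noted after Definition~\ref{defn:dieudonne}), a length-plus-Nakayama bookkeeping shows each associated graded piece $\gr^i M = M^i/M^{i+1}$ is $R$-free of rank $\dim_k(\gr^i M/\m_R \gr^i M)$; hence both $M^i$ and $M/M^i$ are $R$-free, the short exact sequence $0\to M^i \to M \to M/M^i \to 0$ splits as $R$-modules, and applying the $\tau$-idempotent to this splitting yields that each $M_\tau^i$ is an $R$-free direct summand of both $M_\tau$ and $M$.

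The hard part is the equal-rank statement. By Nakayama this reduces to showing $\dim_k(M_\tau/\m_R M_\tau)$ is independent of $\tau$, and by base change along $R\to k$ we further reduce to the case $R = k$. When $R = k$ the relation $\varphi^i|_{M^{i+1}} = p\cdot\varphi^{i+1}|_{M^{i+1}}$ becomes $\varphi^i|_{M^{i+1}} = 0$, so each $\varphi^i$ descends to a $\sigma$-semilinear map $\bar{\varphi}^i : \gr^i M \to M$. Strong divisibility $\sum_i \varphi^i(M^i) = M$ then forces $\bigoplus_i \bar{\varphi}^i : \bigoplus_i \gr^i M \to M$ to be surjective, and the identity $\sum_i \dim_k \gr^i M = \dim_k M$ (from the filtration) upgrades surjectivity to an isomorphism. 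Since $\bar{\varphi}^i$ sends $\gr^i M_\tau$ into $M_{\sigma\tau}$ by Lemma~\ref{lem:ostructure}, this global isomorphism decomposes as a direct sum over $\tau$ of $\sigma$-semilinear isomorphisms $\bigoplus_i \gr^i M_\tau \xrightarrow{\sim} M_{\sigma\tau}$; consequently $\dim_k M_\tau = \dim_k M_{\sigma\tau}$ for every $\tau$, and cyclicity of $\sigma$ on the set of embeddings then yields that all $\dim_k M_\tau$ agree.
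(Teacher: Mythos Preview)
Your argument follows the paper's closely. Steps 1, 2, and 4 match almost exactly; in step 4 you obtain the isomorphism $\bigoplus_i \gr^i M_\tau \xrightarrow{\sim} M_{\sigma\tau}$ directly by a global dimension count, whereas the paper only extracts the inequality $\dim_k \overline{M}_\tau \geq \dim_k \overline{M}_{\sigma\tau}$ from surjectivity and then invokes the finite order of $\sigma$. Your version is marginally cleaner but the content is the same.

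There is one gap in step 3. Knowing that $M^i$ is a $W$-direct summand of $M$ is not by itself enough to run a ``length-plus-Nakayama'' argument showing $\gr^i M$ is $R$-free: a $W$-complement to $M^i$ need not be $R$-stable, and in general an $R$-submodule of a free $R$-module that happens to be a $W$-summand need not be an $R$-summand. Concretely, the length inequalities you can extract (from $R^{d_i}\twoheadrightarrow \gr^i M$ and from $(\gr^i M)/\m_R \twoheadrightarrow \gr^i(M/\m_R M)$) both point the same direction and do not pin down $\dim_k\bigl((\gr^i M)/\m_R\bigr)$. What is actually needed is that morphisms in $\MF^{f}_{W,\tor}$ are \emph{strict} for the filtration: applied to the inclusion $\m_R M \hookrightarrow M$ (whose source carries the filtration $(\m_R M)^i=\m_R M^i$), strictness gives $\m_R M^i=(\m_R M)\cap M^i$, i.e.\ $M^i/\m_R M^i \to M/\m_R M$ is injective. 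From there your splitting argument goes through. This is precisely how the paper handles this step; once you insert the strictness input, your proof is correct and essentially identical to the paper's.
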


\begin{proof}
The full faithfulness of $T_\cris$ allows the transport of $R$-module structure.
Let $N$ be a finitely generated $R$-module with $n = \dim_k N / \m_R N$.  Then $N$ is free if and only if $\lg_\O(N) = n \lg_\O (R)$ , as we see via Nakayama's lemma applied to a map $R^n \to N$ inducing an isomorphism modulo $\m_R$.  From the exact sequence of Fontaine-Laffaille modules
\[
 0 \to \m_R M \to M \to M/ \m_R M \to 0
\]
and the fact that $T_\cris$ is covariant and exact, we see that $T_\cris(M/\m_R M) = V /\m_R V$.  Using Lemma~\ref{lem:ostructure}, if $\dim_k V /\m_R V =n$ then $M / \m _R M$ is a $k$-vector space of dimension $[K:\QQ_p] n$.  Thus to relate $R$-freeness of $M$ and $V$ we just need to show that $\lg_\O(M) = [K:\QQ_p] \lg_\O(V)$, which again follows from Lemma~\ref{lem:ostructure}.

Now suppose $M$ is a free $R$-module.  By functoriality, the $\ZZ_p$-module direct summands $M_\tau$ of $M$ are each $R$-submodules, so each $M_\tau$ is an $R$-module direct summand of $M$.  Hence each $M_\tau$ is $R$-free when $M$ is free.  To deduce the same for each $M_\tau^i$, we just need that each $M_\tau^i$ is an $R$-module summand.  By $R$-freeness of $M$, it suffices to show that each $M_\tau^i  / \m_R M_\tau^i \to M / \m_R M$ is injective.  Since $M_\tau^i$ is the ``$\tau$-component'' of $M^i$ by Lemma~\ref{lem:ostructure} it is an $R$-module summand of $M^i$.  Thus it suffices to show that 
\[
 M^i / \m_R M^i \to M/ \m_R M
\]
is injective for all $i$.  But this follows from the fact that morphisms in $\MF_{W,\tor}^{f,[a,b]}$ are strict.

To check that all of the $M_\tau$ have the same rank, by freeness it suffices to check that $\dim_k \overline{M}_\tau$ is independent of $\tau$.  As all $\ZZ_p$-embeddings of the \emph{unramified} $W$ into $\O$ are of the form $\sigma^i \tau$ for some fixed $\ZZ_p$-embedding $\tau$ and $\sigma$ has finite order, it suffices to show that
\[
 \dim_k \overline{M}_\tau \geq \dim_k \overline{M}_{\sigma \tau}.
\]
As each $\overline{M}^i_\tau$ is a $k$-module direct summand of $\overline{M}_\tau$, $\overline{M}_\tau$ is isomorphic to $\gr^\bullet \overline{M}_\tau$.  But $\varphi^i_{\Mbar}(\overline{M}^{i+1}) = 0$, so we obtain a map
\[
 \sum_i \varphi^i_{M_\tau} : \gr^\bullet \overline{M}_\tau \to \overline{M}_{\sigma \tau}.
\]
 As Fontaine-Laffaille modules satisfy                                                                                                                                                                                                    
\[
 \overline{M} = \sum_{i} \varphi^i(\overline{M}^i)
\]
the map $\sum_i \varphi^i_{M_\tau}$ is surjective.  This completes the proof.
\end{proof}

\begin{remark} \label{remark:embeddingweights}
We get a set of Fontaine-Laffaille weights for each $\ZZ_p$-embedding $\tau : W \into \O$.  We can also define the multiplicity of a weight $w_\tau$ to be the rank of the $R$-module $M^{w_\tau}_\tau / M^{w_\tau+1}_\tau$.  The number of Fontaine-Laffaille weights (counted with multiplicity) is the same for each embedding.  We say the Fontaine-Laffaille weights with respect to an embedding are distinct if each has multiplicity $1$.  This is analogous to the way a Hodge-Tate representation of $\Gamma_K$ over a $p$-adic field splitting $K$ over $\QQ_p$  has a set of Hodge-Tate weights for each $\QQ_p$-embedding of $K$ into that field.  
\end{remark}

We can now define a notion of a tensor product for Fontaine-Laffaille modules that are also $R$-modules objects for a coefficient ring $R$ over $\O$.

\begin{defn}
 Define $M_1 \tensor{W \tensor{\ZZ_p} R} M_2$ to be the module $M_1 \tensor{W \tensor{\ZZ_p} R} M_2$ together with filtration defined by $(M_1 \tensor{W \tensor{\ZZ_p} R} M_2)^n = \sum_{i+j = n} M_1^i \tensor{W \tensor{\ZZ_p} R} M_2^j$ and with $\varphi^n_{M_1 \tensor{W \tensor{\ZZ_p} R} M_2}$ defined in the obvious way on the pieces.
\end{defn}

\begin{lem} \label{lem:fltensorr}
Suppose that $M_1$ and $M_2$ are $R$-module objects for a coefficient ring $R$ over $\O$ and that $M_1$, $M_2$, and $M_1 \tensor{W \tensor{\ZZ_p} R} M_2$ are all in $\MF_{W,\tor}^{f,[a,b]}$.  The natural map $T_\cris(M_1) \tensor{R} T_\cris(M_2) \to T_\cris(M_1 \tensor{W \tensor{\ZZ_p} R} M_2)$ is an isomorphism of $R[\Gamma_K]$-modules.
\end{lem}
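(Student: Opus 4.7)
The plan is to realize both $T_\cris(M_1)\otimes_R T_\cris(M_2)$ and $T_\cris(M_1\otimes_{W\otimes_{\ZZ_p}R}M_2)$ as quotients of $T_\cris(M_1)\otimes_{\ZZ_p}T_\cris(M_2)$, and then to show that the two resulting kernels coincide. The bridge between these presentations is supplied by Fact~\ref{fact:tensor}: under the implicit hypothesis that $M_1\otimes_W M_2$ has Fontaine-Laffaille weights in an interval of length at most $p-2$ (the situation in the applications of interest, where $b-a\le(p-2)/2$), we obtain a natural $\Gamma_K$-equivariant identification $T_\cris(M_1\otimes_W M_2)\cong T_\cris(M_1)\otimes_{\ZZ_p}T_\cris(M_2)$.

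The multiplication $R\otimes_{\ZZ_p}R\to R$ induces a natural surjection $\pi\colon M_1\otimes_W M_2\twoheadrightarrow M_1\otimes_{W\otimes_{\ZZ_p}R}M_2$ in $\MF_W$, since the filtrations and $\varphi^\bullet$'s on both sides are built from the same data on $M_1$ and $M_2$. Let $N:=\ker\pi$. Applying the exact functor $T_\cris$ and invoking Fact~\ref{fact:tensor} yields a short exact sequence
\[
0\to T_\cris(N)\to T_\cris(M_1)\otimes_{\ZZ_p}T_\cris(M_2)\to T_\cris(M_1\otimes_{W\otimes_{\ZZ_p}R}M_2)\to 0.
\]
The second arrow is $R$-bilinear, because its target is an $R$-module and the $R$-action on each $T_\cris(M_i)$ is, by definition of $R$-module object, the image under $T_\cris$ of the action on $M_i$. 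Hence it annihilates the $R$-balancing submodule $K_R\subseteq T_\cris(M_1)\otimes_{\ZZ_p}T_\cris(M_2)$ generated by $\{rt_1\otimes t_2-t_1\otimes rt_2\}$, producing both the surjection $T_\cris(M_1)\otimes_R T_\cris(M_2)\twoheadrightarrow T_\cris(M_1\otimes_{W\otimes_{\ZZ_p}R}M_2)$ of the lemma and the inclusion $K_R\subseteq T_\cris(N)$.

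The main obstacle is the reverse inclusion $T_\cris(N)\subseteq K_R$. I would prove it by writing $N$ as a (necessarily finite) sum of images of explicit $\MF_W$-morphisms. For each $r\in R$, the $R$-module structure gives $r_i\in\End_{\MF_W}(M_i)$, and $\psi_r:=r_1\otimes\id_{M_2}-\id_{M_1}\otimes r_2$ is a morphism in $\MF_W$ whose image $N_r$ has underlying $W$-module spanned by the balancing relations $rm_1\otimes m_2-m_1\otimes rm_2$; hence $N=\sum_r N_r$ as sub-Fontaine-Laffaille modules, and only finitely many $r$ are needed because $M_1\otimes_W M_2$ has finite $W$-length. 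Naturality of the isomorphism of Fact~\ref{fact:tensor} in each variable, combined with the tautological identity $T_\cris(r_i)=r$ on $T_\cris(M_i)$, identifies $T_\cris(\psi_r)$ with the endomorphism $r\otimes\id-\id\otimes r$ of $T_\cris(M_1)\otimes_{\ZZ_p}T_\cris(M_2)$, so by exactness $T_\cris(N_r)$ is precisely its image. Since $T_\cris$ is additive and converts finite sums of sub-objects into sums of images, $T_\cris(N)=\sum_r T_\cris(N_r)=K_R$, which is the desired inclusion. The delicate step is thus the transport of the concrete endomorphisms $\psi_r$ to the Galois side, and this is exactly where the covariant tensor compatibility of Fact~\ref{fact:tensor} is essential; the remainder is formal diagram chasing.
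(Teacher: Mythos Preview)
Your proof is correct and follows essentially the same approach as the paper: write the kernel $N$ (the paper calls it $J$) of $M_1\otimes_W M_2\twoheadrightarrow M_1\otimes_{W\otimes_{\ZZ_p}R}M_2$ as $\sum_r\operatorname{Im}(\psi_r)$, transport each $\psi_r$ to the Galois side via Fact~\ref{fact:tensor} and functoriality, and conclude $T_\cris(N)=K_R$. The one place where the paper is more careful is the assertion that $T_\cris$ carries $\sum_r N_r$ to $\sum_r T_\cris(N_r)$: you invoke additivity, but additivity alone does not give this; one needs exactness (which you used earlier). The paper spells this out by reducing to $T_\cris(N_1+N_2)=T_\cris(N_1)+T_\cris(N_2)$ and proving the latter via the two exact sequences $0\to N_1\cap N_2\to N_1\oplus N_2\to N_1+N_2\to 0$ and $0\to N_1\cap N_2\to N_1\oplus N_2\to M_1\otimes_W M_2$. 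This is a routine consequence of exactness, so your argument is fine once ``additive'' is replaced by ``exact''.
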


\begin{proof}
We have an exact sequence
\[
 0 \to J \to M_1 \tensor{W} M_2 \to M_1 \tensor{W \tensor{\ZZ_p} R} M_2 \to 0
\]
where $J$ is generated by the extra relations imposed by $R$-bilinearity (beyond $W$-bilinearity).  For $r \in R$, define $\mu_r : M_1 \tensor{W} M_2 \to M_1 \tensor{W} M_2$ by
\[
 \mu_r( m_1 \otimes m_2) = (r m_1) \otimes m_2 - m_1 \otimes (r m_2).
\]
Then $J = \sum_{r \in R} \Im(\mu_r)$; this is an object in the abelian category $\MF_{W,\tor}^{f,[a,b]}$.  We will show that $T_\cris(J)$ is the kernel of $T_\cris(M_1 \tensor{W} M_2) \to T_\cris(M_1) \tensor{R} T_\cris(M_2)$.

It suffices to show that $T_\cris(N_1 + N_2) = T_\cris(N_1) + T_\cris(N_2)$ for subobjects $N_1$ and $N_2$ of $M_1 \tensor{W} M_2$.  Indeed, granting this we would know that
\[
 T_\cris(J) = \sum_{r \in R} T_\cris(\mu_r).
\]
But by functoriality $T_\cris(\mu_r)$ is the map $T_\cris(M_1) \tensor{W} T_\cris(M_2) \to T_\cris(M_1) \tensor{W} T_\cris(M_2)$ given by $v_1 \otimes v_2 \mapsto r v_1 \otimes v_2 - v_1 \otimes r v_2$, so $T_\cris(J)$ is the kernel of $T_\cris(M_1 \tensor{W} M_2) \to T_\cris(M_1) \tensor{R} T_\cris(M_2)$ as desired.

To prove that $T_\cris(N_1 + N_2) = T_\cris(N_1) + T_\cris(N_2)$, consider the exact sequence
\[
  0 \to N_1 \cap N_2 \to N_1 \oplus N_2 \to N_1 + N_2 \to 0.
\]
As $T_\cris$ preserves direct sums, it suffices to show that
\[
 T_\cris(N_1) \cap T_\cris(N_2) = T_\cris(N_1 \cap N_2).
\]
But this follows from the exactness of $T_\cris$ and the left exact sequence
\[
 0 \to N_1 \cap N_2 \to N_1 \oplus N_2 \to M_1 \tensor{W} M_2
\]
where the second map is $(n_1, n_2) \mapsto n_1 - n_2$.  
\end{proof}

\subsection{Duality}
Let $R$ be a coefficient ring over $\O$ and $M \in \MF_{W,\tor}^f$ be a free $R$-module compatible with the Fontaine-Laffaille structure in the sense that the action of $R$ is given by morphisms of Fontaine-Laffaille modules.  Fix $L \in \MF_{W,\tor}^f$ with an $R$-structure compatible with the Fontaine-Laffaille structure so that for each $\tau$, $L_\tau$ is a free $R$-module of rank $1$ with $L^{s_\tau}_\tau = L_\tau$ and $L^{s_\tau + 1}_\tau =0$ for some $s_\tau$ (the analogue of a character taking values in $R^\times$).  We will define a dual relative to $L$ akin to Cartier duality.  This will be useful for studying pairings.

\begin{defn} \label{def:mfrdual}
For an $M$ as above, define $M^\vee = \Hom_{R \tensor{\ZZ_p} W} (M,L)$ with a filtration given by 
\[
 \Fil^i M^\vee = \{ \psi \in \Hom_{R \tensor{\ZZ_p} W}(M,L) : \psi(M^j) \subset L^{i+j} \, \mbox{ for all } j \in \ZZ\}.
\]
For $\psi \in \Fil^i M^\vee$, define $\varphi^i_{M^\vee}(\psi)$ to be the unique function in $\Hom_{R \tensor{\ZZ_p} W}(M,L)$ such that 
\[
 \varphi^i_{M^\vee}(\psi) (\varphi^j_M(m)) = \varphi_L^{i+j} (\psi( m))
\]
for all $m \in M^j$ and $j$.
\end{defn}

If $\varphi^i_{M^\vee}$ exists, it is unique since the images of the $\varphi^j_M$'s span $M$ additively.  Likewise, if $\varphi^i_{M^\vee}$ exists for all $i$ they are automatically $\sigma$-semilinear and satisfy $p \varphi^{i+1}_{M^\vee} = \varphi^i_{M^\vee} |_{\Fil^{i+1} M^\vee}$.  We check $\varphi^i_{M^\vee}(\psi)$ is well-defined in the following lemma.  The key fact is that all of the $M_\tau^i$ are free $R$-module direct summands of $M_\tau$ (by Lemma~\ref{lem:rfree}).

\begin{lem} \label{lem:flduality1}
 The function $\varphi^i_{M^\vee}(\psi)$ is well-defined, and the filtration can equivalently be described as
\[
 \Fil^i M^\vee = \bigoplus_{\tau : W \into \O} \Hom_{R} (M_\tau/M_\tau^{1+s_\tau-i} , L_\tau).
\]
\end{lem}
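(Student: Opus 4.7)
The alternative description of $\Fil^i M^\vee$ follows directly from the decomposition $M = \bigoplus_\tau M_\tau$ and the shape of $L$: any $\psi \in \Hom_{R \tensor{\ZZ_p} W}(M, L)$ splits as $\psi = \sum_\tau \psi_\tau$ with each $\psi_\tau : M_\tau \to L_\tau$ an $R$-linear map, and since $L_\tau^n = L_\tau$ for $n \leq s_\tau$ while $L_\tau^n = 0$ for $n > s_\tau$, the condition $\psi_\tau(M_\tau^j) \subset L_\tau^{i+j}$ is automatic when $j \leq s_\tau - i$ and is equivalent to $\psi_\tau$ vanishing on $M_\tau^{1+s_\tau-i}$ otherwise. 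This yields the claimed formula.

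For well-definedness of $\varphi^i_{M^\vee}(\psi)$, I would appeal to Lemma~\ref{lem:rfree}, which guarantees that each $M_\tau^j$ is a free $R$-module direct summand of $M_\tau$ and that all $M_\tau$ share a common $R$-rank. This allows a choice of $R$-module splittings $M_\tau = \bigoplus_k P^k_\tau$ with $M_\tau^j = \bigoplus_{k \geq j} P^k_\tau$. The Fontaine-Laffaille axiom $\sum_j \varphi^j_M(M^j) = M$, decomposed along the $\tau$-grading, shows the $R$-linear map $\Psi_\tau : M_\tau \to M_{\sigma \tau}$ sending $\sum_k p_k \mapsto \sum_k \varphi^k_M(p_k)$ is surjective; since source and target are free $R$-modules of the same finite rank, $\Psi_\tau$ must be an $R$-isomorphism. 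I would then define $\varphi^i_{M^\vee}(\psi)$ on $M_{\sigma \tau}$ by
\[
\sum_k \varphi^k_M(p_k) \longmapsto \sum_k \varphi^{i+k}_L(\psi(p_k)),
\]
a well-defined function because $\Psi_\tau$ is an isomorphism and because $\psi \in \Fil^i M^\vee$ together with $p_k \in M^k$ forces $\psi(p_k) \in L^{i+k}$.

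The main task is to verify this candidate function satisfies the prescribed formula $\varphi^i_{M^\vee}(\psi)(\varphi^j_M(m)) = \varphi^{i+j}_L(\psi(m))$ for arbitrary $m \in M^j_\tau$, not just for $m$ lying in the chosen summand $P^j_\tau$. Writing $m = \sum_{k \geq j} p_k$ with $p_k \in P^k_\tau$, iterated use of $\varphi^n_M|_{M^{n+1}} = p \cdot \varphi^{n+1}_M$ gives $\varphi^j_M(m) = \sum_{k \geq j} p^{k-j} \varphi^k_M(p_k)$. Evaluating the candidate yields $\sum_{k \geq j} p^{k-j} \varphi^{i+k}_L(\psi(p_k))$, and the analogous identity $p^{k-j} \varphi^{i+k}_L = \varphi^{i+j}_L$ on $L^{i+k}$ combined with additivity rewrites this as $\varphi^{i+j}_L\bigl(\sum_{k \geq j} \psi(p_k)\bigr) = \varphi^{i+j}_L(\psi(m))$, as required. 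The $R \tensor{\ZZ_p} W$-linearity of the result is then formal, following from $\sigma$-semilinearity of the $\varphi$-operators together with $R \tensor{\ZZ_p} W$-linearity of $\psi$. The main subtlety is purely bookkeeping: confirming that the $p$-power factors on the $M$-side cancel precisely against those on the $L$-side via the strict compatibility of $\varphi$ with the filtrations.
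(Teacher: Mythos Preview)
Your proof is correct, and your treatment of the filtration description matches the paper's. For the construction of $\varphi^i_{M^\vee}(\psi)$, however, you take a genuinely different route. The paper invokes the Fontaine--Laffaille exact sequence \cite[Lemme 1.7]{fl82}
\[
0 \to \bigoplus_{r=a+1}^b M^r \to \bigoplus_{r=a}^b M^r \xrightarrow{\ \sum \varphi^r_M\ } M \to 0,
\]
defines a map $\bigoplus_r M^r \to L$ by $(m_r) \mapsto \sum_r \varphi^{i+r}_L(\psi(m_r))$, and checks directly that it kills the image of the first arrow, hence factors through $M$. Your approach instead chooses explicit $R$-module splittings $M_\tau = \bigoplus_k P^k_\tau$ of the filtration (available by Lemma~\ref{lem:rfree}), observes that the resulting ``associated-graded Frobenius'' $\Psi_\tau$ is a surjection between free $R$-modules of equal rank and hence an isomorphism, and uses $\Psi_\tau^{-1}$ to write down $\varphi^i_{M^\vee}(\psi)$ directly before verifying the defining identity. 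The paper's argument is choice-free and does not require the $\tau$-decomposition or the equal-rank statement; yours is more hands-on and dovetails nicely with the proof of Lemma~\ref{lem:flduality}, which also picks such splittings. One small point you leave implicit: deducing surjectivity of $\Psi_\tau$ from $\sum_j \varphi^j_M(M^j) = M$ needs a short downward induction on $j$, since $\varphi^j_M(M^j_\tau)$ contains not only $\varphi^j_M(P^j_\tau)$ but also $p\,\varphi^{j+1}_M(M^{j+1}_\tau)$.
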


\begin{proof}
We first establish the alternate description of $\Fil^i M^\vee$.   Because
\[
 \Hom_{R \tensor{\ZZ_p} W}(M,L) = \bigoplus_{\tau : W \into \O} \Hom_R (M_\tau, L_\tau),
\]
and $L_\tau^{s_\tau} = L_\tau$ while $L_\tau^{s_\tau+1} = 0$, an element $\psi_\tau \in \Hom_R(M_\tau,L_\tau)$ satisfies $\psi_\tau(M_\tau^j) \subset L_\tau^{i+j}$ if and only if $\psi_\tau(M^j_\tau) =0$ whenever $i+j > s_\tau$.  This says exactly that $\psi_\tau$ factors through $M_\tau/M_\tau^{1+s_\tau-i}$.  Because $M^{1+s_\tau-i}_\tau$ is an $R$-module direct summand, hence free with free complement, a morphism $M_\tau/M_\tau^{1+s_\tau-i} \to L_\tau$ is equivalent to a morphism $\psi_\tau: M_\tau \to L_\tau$ such that $\psi_\tau(M^{1+s_\tau -i}_\tau) =0$.  Thus $\Fil^i M^\vee_\tau = \Hom_R(M_\tau/ M_\tau^{1+s_\tau -i},L_\tau)$ as desired.

We will construct $\varphi^i_{M^\vee} : \Fil^i M^\vee \to M^\vee$ using the exact sequence
\begin{align} \label{eq:flexact}
 0 \to \bigoplus_{r = a+1}^b M^r \to \bigoplus_{r=a}^b M^r \to M \to 0
\end{align}
of \cite[Lemme 1.7]{fl82}.  The first map sends $(m_r)_{r=a+1}^{r=b}$ to $(p m_r - m_{r+1})_{r=a}^{r=b}$ (with the convention that $m_a =0$ and $m_{b+1}=0$), and the second map is $\sum_{r=a}^b \varphi^r_M$.  For $\psi \in \Fil^i M^\vee$, consider the map 
\[
 \phi : \bigoplus_{r=a}^b M^r \to L
\]
induced by the $\varphi^{i+r}_{L} \circ \psi : M^r \to L$.  For $(m_r)_{r=a+1}^{r=b}$ in $\displaystyle \bigoplus_{r = a+1}^b M^r$, we compute that
\begin{align*}
\phi( (m_r)_{r=a+1}^{r=b}) & = \sum_{j=a}^b \varphi_{L}^{i+j} (\psi( (p m_j - m_{j+1}) )) \\
 &= \sum_{j=a}^b p \varphi_{L}^{i+j} (\psi( m_j)) - \sum_{j=a}^b  \varphi_{L}^{i+j} (\psi( m_{j+1})).
\end{align*}
But $\varphi_{L}^{i+j} |_{L^{i+j+1}} = p \varphi_{L}^{i+j+1}$, so this difference is
\[
 \sum_{j=a}^b p \varphi_{L}^{i+j} (\psi( m_j)) - \sum_{j=a+1}^{b+1}  p\varphi_{L}^{i+j} (\psi( m_{j}))
\]
which vanishes as $m_{b+1}=0$ and $m_a=0$.
Hence $\phi$ factors through the quotient $M$ of \eqref{eq:flexact}, giving the desired well-defined map $\varphi^i_{M^\vee}$.  
\end{proof}

\begin{lem} \label{lem:flduality}
The Fontaine-Laffaille module $M^\vee$ is an object of $\MF_{W,\tor}^{f}$.  
\end{lem}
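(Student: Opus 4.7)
The plan is to verify the four defining conditions for $M^\vee$ to be an object of $\MF_{W,\tor}^f$: finite length over $W$; the filtration being decreasing, separated, and exhaustive; the identity $p\varphi^{i+1}_{M^\vee} = \varphi^i_{M^\vee}|_{\Fil^{i+1} M^\vee}$; and the strong divisibility $\sum_i \varphi^i_{M^\vee}(\Fil^i M^\vee) = M^\vee$. The first three are routine: finite length follows from $M^\vee \subset \Hom_W(M,L)$ with both $M$ and $L$ of finite length; the filtration conditions follow directly from the alternate description
\[
 \Fil^i M^\vee = \bigoplus_\tau \Hom_R\bigl(M_\tau/M_\tau^{1+s_\tau-i},\, L_\tau\bigr)
\]
of Lemma~\ref{lem:flduality1} together with the filtration properties of $M$; and the compatibility with $p$ was already noted after Definition~\ref{def:mfrdual} as an immediate consequence of the analogous identity on $L$.

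The substance is strong divisibility, which I verify component-by-component. The key structural input is Lemma~\ref{lem:rfree}: each $M_\tau$ is a free $R$-module with free $R$-summand filtration pieces, so each $\gr^j M_\tau$ is $R$-free. Choose a splitting $M_\tau \cong \bigoplus_j \gr^j M_\tau$ of $M_\tau$ as a filtered $R$-module. The strong divisibility of $M$, combined with the identity $\varphi^k_M|_{M^{k+1}_\tau} = p \varphi^{k+1}_M$, implies that the induced map $\bigoplus_j \gr^j M_\tau \to M_{\sigma\tau}$ is surjective, and a comparison of $R$-ranks via Lemma~\ref{lem:rfree} upgrades this to an $R$-module isomorphism. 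An analogous argument using the rank-$1$ structure of $L_\tau$ (so $L^i_\tau = L_\tau$ for $i \leq s_\tau$) shows that $\varphi^{s_\tau}_L : L_\tau \to L_{\sigma\tau}$ is an $R$-isomorphism.

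Dualizing the splitting yields $M^\vee_{\sigma\tau} \cong \bigoplus_k \Hom_R(\gr^k M_\tau, L_{\sigma\tau})$, and a direct unwinding of the defining identity $\varphi^i_{M^\vee}(\psi)(\varphi^k_M(m)) = \varphi^{i+k}_L(\psi(m))$, using the iterated relation $\varphi^{s_\tau - j + k}_L = p^{j-k}\varphi^{s_\tau}_L$ on $L_\tau$, shows that for $\psi \in \Fil^{s_\tau-j} M^\vee_\tau = \Hom_R(M_\tau/M_\tau^{j+1}, L_\tau)$ the $k$-component of $\varphi^{s_\tau-j}_{M^\vee}(\psi)$ vanishes for $k > j$ and equals $\varphi^{s_\tau}_L \circ \psi|_{\gr^j M_\tau}$ on the $j$-component (a map which, as $\psi$ varies, takes every value in $\Hom_R(\gr^j M_\tau, L_{\sigma\tau})$ thanks to the isomorphism $\varphi^{s_\tau}_L$ and the free-summand property). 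A downward induction on $j$ then writes any element of $M^\vee_{\sigma\tau}$ as a sum of elements of the form $\varphi^{s_\tau-j}_{M^\vee}(\psi_{s_\tau-j})$, completing the proof. The main obstacle is this final piece of bookkeeping; the freeness provided by Lemma~\ref{lem:rfree} is precisely what makes the splittings, rank comparisons, and clean description of the $k$-components available.
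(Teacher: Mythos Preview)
Your proof is correct and follows essentially the same approach as the paper's: both split $M_\tau$ into pieces $N^j_\tau$ complementary to the filtration, identify $M_{\sigma\tau}$ with $\bigoplus_j N^j_\tau$ via the $\varphi^j_M$, and compute $\varphi^i_{M^\vee}(\psi)$ component-by-component using the defining identity. The only tactical difference is that the paper first reduces modulo $\m_R$ via Nakayama's lemma, whereupon $\varphi^i_L|_{\overline{L}_\tau}=0$ for $i<s_\tau$ kills the lower components as well, yielding the exact equality $\varphi^i_{M^\vee}(\Fil^i \overline{M}^\vee)=\bigoplus_\tau \Hom(\varphi^{s_\tau-i}_M(\overline{N}_\tau^{s_\tau-i}),\overline{L}_{\sigma\tau})$ and avoiding your triangular induction; your direct argument over $R$ is equally valid.
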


\begin{proof}
It suffices to show that the inclusion
\[
 \sum_i \varphi^i_{M^\vee} (\Fil^i M^\vee) \into M^\vee
\]
is an equality.  By Nakayama's lemma, it suffices to show that the reduction modulo $\m_R$ is surjective.  For an $R$-module $N$, let $\overline{N}$ denote the reduction modulo $\m_R$.  We may pick free $R$-modules $N_\tau^i$ such that $M^i_\tau = N_\tau^i \oplus M^{i+1}_\tau$ as each $M^i_\tau$ is a (free) direct summand of the $R$-free $M_\tau$ that is an $R$-free direct summand of $M$.  Because $p \cdot \varphi^{i+1}_M = \varphi^i_M |_{M^{i+1}}$, we see $\varphi^i_{M} ( \overline{M}^i_\tau) = \varphi^i_M(\overline{N}^i_\tau)$, so 
\[
 \overline{M}_{\sigma \tau} = \sum_i \varphi^i_M(\overline{N}^i_\tau).
\]
By Lemma~\ref{lem:rfree}, $\overline{M}_\tau$ and $\overline{M}_{\sigma \tau}$ have the same dimension so $\varphi^i_M |_{N_\tau^i}$ is injective and the sum is direct.  We also know that $\varphi_L^{i} |_{\overline{L_\tau}} =0$ for $i < s_\tau$ because $p \cdot \varphi^{j+1}_L = \varphi^j_L |_{L^{j+1}}$.

As $M_\tau$ and $L_\tau$ are free $R$-module summands of $M$ and $L$ for all $\tau$, $\overline{M^\vee} = \overline{M}^\vee$ by Lemma~\ref{lem:flduality1}.
We can describe an element $\psi \in \Fil^i \overline{M^\vee}$ as a collection of $ \psi_{\tau,j} \in \bigoplus_{\tau,j} \Hom_{R}(\overline{N}_\tau^j , \overline{L}^{i+j}_\tau)$.  But $\overline{L}^{i+j}_\tau$ is one-dimensional over $k$ if $i+j \leq s_\tau$, and is zero otherwise.    
Then for $f = \varphi^i_{M^\vee}(\psi)$ and $m = \sum_{\tau,j} \varphi^j_M( n_{\tau,j})$ with $n_{\tau,j} \in \overline{N}_{\tau}^j$, by construction we have
\[
 f(m) = \sum_{\tau,j} \varphi_L^{i+j} ( \psi(n_{\tau,j})).
\]
But $\varphi_L^{i+j} ( \psi(n_{\tau,j}))$ is forced to be zero unless $i+j = s_\tau$, in which case it can take on any non-zero value in $\overline{L}_\tau$ (depending on the choice of $\psi$).  Thus
\[
 \varphi^i_{M^\vee} (\Fil^i \overline{M}^\vee) = \bigoplus_\tau \Hom \left(\varphi^{s_\tau -i}_M(\overline{N}_{\tau}^{s_\tau-i}),\overline{L}_{\sigma \tau} \right).
\]
Summing over $i$, and using the sum decomposition $\overline{M} = \sum_{\tau, i} \varphi^i_M(\overline{N}^i_\tau)$ gives that
\[
 \sum_i \varphi^i_{\overline{M}^\vee} (\Fil^i \overline{M}^\vee) = \Hom( \overline{M} , \overline{L}).
\]
This shows the desired surjectivity.
\end{proof}

\begin{remark}
 For fixed $\ZZ_p$-embedding $\tau : W \into \O$, if the Fontaine-Laffaille weights (Remark~\ref{remark:embeddingweights}) of $M$ with respect to $\tau$ are $\{w_{\tau,i}\}_i$ then the Fontaine-Laffaille weights of $M^\vee$ with respect to $\tau$ are $\{s_\tau - w_{\tau,i}\}_i$.
\end{remark}

Now assume we have a Galois representation $\nu$ on the free rank-$1$ $R$-module corresponding to $L$; we define the \emph{dual} $V^\vee = \Hom_{R[\Gamma_K]}(V,R(\nu))$ for a discrete $\Gamma_K$-representation on a finite free $R$-module $V$.

\begin{lem} \label{lem:compatibleduals}
For a morphism $f: M \to N$ in $\MF_{W,\tor}^{f,[a,b]}$ with $b-a \leq \frac{p-2}{2}$, there is a natural isomorphism $T_\cris(M^\vee) \simeq T_\cris(M)^\vee$ and $T_\cris(f^\vee) = T_\cris(f)^\vee$.
\end{lem}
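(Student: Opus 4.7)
The plan is to exhibit $\Psi_M$ via the evaluation pairing in $\MF$ and then verify it is an isomorphism via a Yoneda/rank argument after reducing to the residue field. I first verify that the evaluation map $\on{ev}_M : M^\vee \tensor{R \tensor{\ZZ_p} W} M \to L$ sending $\psi \otimes m \mapsto \psi(m)$ is a morphism in $\MF_{W,\tor}^f$. Compatibility with the filtration is immediate from the description of $\Fil^i M^\vee$ in Definition~\ref{def:mfrdual} (if $\psi \in \Fil^i M^\vee$ and $m \in M^j$ then $\psi(m) \in L^{i+j}$); compatibility with Frobenius is precisely the defining relation $\varphi^i_{M^\vee}(\psi)(\varphi^j_M(m)) = \varphi^{i+j}_L(\psi(m))$ from Definition~\ref{def:mfrdual}.

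Next I apply $T_\cris$. The weight hypothesis $b-a \leq \frac{p-2}{2}$ combined with Lemma~\ref{lem:flduality1} bounds the Fontaine-Laffaille weights of $M^\vee$ and of $M^\vee \otimes M$ so that Lemma~\ref{lem:fltensorr} applies, giving an $R[\Gamma_K]$-linear isomorphism
\[
T_\cris(M^\vee \tensor{R \tensor{\ZZ_p} W} M) \simeq T_\cris(M^\vee) \tensor{R} T_\cris(M).
\]
Composing with $T_\cris(\on{ev}_M)$ and $T_\cris(L) = R(\nu)$ yields a $\Gamma_K$-equivariant $R$-bilinear pairing $T_\cris(M^\vee) \tensor{R} T_\cris(M) \to R(\nu)$, which by tensor-hom adjunction produces the $R[\Gamma_K]$-linear map $\Psi_M : T_\cris(M^\vee) \to T_\cris(M)^\vee$. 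Naturality of every step in $M$ (including $\on{ev}_M$) yields the second claim $T_\cris(f^\vee) = T_\cris(f)^\vee$.

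The main obstacle is showing $\Psi_M$ is an isomorphism. By Lemma~\ref{lem:flduality1}, $M^\vee$ is $R$-free of the same total rank as $M$, so by Lemma~\ref{lem:rfree} both $T_\cris(M^\vee)$ and $T_\cris(M)^\vee$ are $R$-free of the same rank. Nakayama's lemma reduces the problem to verifying $\Psi_{\overline{M}}$ is an isomorphism when $R = k$. Here the evaluation pairing $\overline{M}^\vee \tensor{k \tensor{\ZZ_p} W} \overline{M} \to \overline{L}$ is the canonical perfect pairing of $k \tensor{\ZZ_p} W$-modules by the description of $\overline{M}^\vee$ in Lemma~\ref{lem:flduality1}, and I would upgrade this to perfectness on the Galois side by a Yoneda argument. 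Specifically, for $N \in \MF_{W,\tor}^{f,[a',b']}$ with $b'-a'$ small enough that Lemma~\ref{lem:fltensorr} applies to $N \otimes \overline{M}$, the full faithfulness of $T_\cris$ and Lemma~\ref{lem:fltensorr} give natural isomorphisms
\[
\Hom_\MF(N, \overline{M}^\vee) \simeq \Hom_\MF(N \otimes \overline{M}, \overline{L}) \simeq \Hom_{k[\Gamma_K]}(T_\cris(N), T_\cris(\overline{M})^\vee),
\]
so the representable functors $\Hom_{k[\Gamma_K]}(T_\cris(N), T_\cris(\overline{M}^\vee))$ and $\Hom_{k[\Gamma_K]}(T_\cris(N), T_\cris(\overline{M})^\vee)$ agree naturally in $N$. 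Since by Fact~\ref{fact:tcris}(6) the essential image of $T_\cris$ on such $N$ is rich enough to detect isomorphisms of Galois modules in the relevant weight range, Yoneda forces $\Psi_{\overline{M}}$ (which by construction realizes this natural isomorphism on identity maps) to be an isomorphism.
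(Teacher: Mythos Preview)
Your setup (evaluation pairing in $\MF$, application of Lemma~\ref{lem:fltensorr}, naturality for the second claim, and Nakayama reduction to $R=k$) is exactly what the paper does. The divergence is in how the residue-field case is handled: the paper argues by induction on $\dim_k \overline{M}$, using Fact~\ref{fact:tcris}\eqref{tcris:3} to write the annihilator of $T_\cris(\overline{M}^\vee)$ as $T_\cris(M_1)$ for a proper subobject $M_1 \subset \overline{M}$ and then applying the inductive hypothesis to $M_1$; you instead attempt a Yoneda argument.

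Your Yoneda step has a gap. The natural isomorphism you display only tests $\Psi_{\overline{M}}$ against objects of the form $T_\cris(N)$, and Fact~\ref{fact:tcris}\eqref{tcris:6} tells you these are exactly the torsion-crystalline modules in the given weight range --- not all $k[\Gamma_K]$-modules. For Yoneda to force $\Psi_{\overline{M}}$ to be an isomorphism you would need $T_\cris(\overline{M})^\vee$ itself to lie in the essential image, which is precisely what you are trying to prove (or requires the external input that duals of torsion-crystalline representations are torsion-crystalline, which the paper does not invoke). The fix is to use Fact~\ref{fact:tcris}\eqref{tcris:3} rather than \eqref{tcris:6}: the kernel of $\Psi_{\overline{M}}$ is a $\Gamma_K$-subrepresentation of $T_\cris(\overline{M}^\vee)$, hence equals $T_\cris(N_0)$ for some subobject $N_0 \subset \overline{M}^\vee$; your natural isomorphism applied with $N = N_0$ then says that post-composition with $\Psi_{\overline{M}}$ is injective on $\Hom(T_\cris(N_0), T_\cris(\overline{M}^\vee))$, so the inclusion $T_\cris(N_0) \hookrightarrow T_\cris(\overline{M}^\vee)$ must be zero, and equal dimensions finish. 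With this correction your argument and the paper's are really two packagings of the same use of closure under subobjects.
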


\begin{proof}
 We prove this by studying the evaluation pairing $M \tensor{R} M^\vee \to L$.  It is straightforward to verify that this pairing is a morphism of Fontaine-Laffaille modules.  Because $b-a \leq \frac{p-2}{2}$, Lemma~\ref{lem:fltensorr} gives a pairing of Galois-modules
 \begin{equation} \label{eq:dualitypairing}
 T_\cris(M) \tensor{R} T_\cris(M^\vee) = T_\cris(M \tensor{W \tensor{\ZZ_p} R} M^\vee)  \to T_\cris(L) = \nu.
 \end{equation}
 
We will now prove that this pairing is perfect when $R = k$.  We will do so by inducting on the dimension of the $k$-vector space $M$.  The case of dimension $0$ is clear.  If $M \neq 0$ the pairing of Fontaine-Laffaille modules is non-zero (look at the pairing $M_\tau \times \Hom(M_\tau,L_\tau) \to L_\tau$ of vector spaces).  Thus the pairing of Galois-modules is non-zero if $M \neq 0$ as $T_\cris$ is faithful.  

Now we use induction, so we can assume $M \neq 0$.  The annihilator of $T_\cris(M^\vee)$ is $T_\cris(M_1)$ for some $f : M_1 \into M$ because the essential image of $T_\cris$ is closed under taking sub-objects.  We know $M_1$ is a proper sub-object as the pairing is non-zero.  Observe that we may define the dual $f^\vee : M^\vee \to M_1^\vee$ by precomposition: it is surjective as we are over a field.  For $v_1 \in T_\cris(M_1)$ and $v_2 \in T_\cris(M^\vee)$, we must have
\[
 0 =\langle v_1, f^\vee v_2 \rangle = \langle f(v_1), v_2 \rangle.
\]
But the pairing $T_\cris(M_1) \otimes T_\cris(M_1^\vee) \to T_\cris(L)$ is non-degenerate by induction, and $f^\vee$ is surjective, so this means that $v_1=0$.  Thus $T(M_1)$ and hence $M_1$ are trivial.  Over the field $k$, this ensures the pairing is perfect.

For the general case, we use the basic fact that for a coefficient ring $R$, if $N_1$ and $N_2$ are free $R$-modules of the same rank with an $R$-bilinear pairing $N_1 \times N_2 \to R$, the pairing is perfect if the reduction (modulo $\m_R$) $\overline{N_1} \times \overline{N_2} \to k$ is perfect.  
Apply this to $T_\cris(M) \times T_\cris(M^\vee) \to T_\cris(L)$.

The statement $T_\cris(f^\vee) = T_\cris(f)^\vee$ is just functoriality.
\end{proof}

\section{Fontaine-Laffaille Deformations} \label{sec:fldeformation}

Let $G = \GSp_{r}$ or $\GO_{r}$, and consider a representation $\rhobar : \Gamma_K \to G(k)$ with similitude character $\nubar$, where $K = W[\frac{1}{p}]$ for $W = W(k')$ with finite $k'$.  Let $\overline{V}$ be the underlying vector space for $\rhobar$ using the standard representation of $G$.  Take $\O$ to be the Witt vectors of $k$, and assume $\O[\frac{1}{p}]$ splits $K$ over $\QQ_p$.  Fix a lift $\nu : \Gamma_K \to \O^\times$ of $\nubar$ that is crystalline with Hodge-Tate weights $\{s_\tau\}_\tau$ in an interval of length $p-2$, and let $T_\cris(L) = \nu$.  

We suppose that $\rhobar$ is torsion-crystalline with Hodge-Tate weights in an interval $[a,b]$ where $0\leq b-a \leq \frac{p-2}{2}$ so we can use Fontaine-Laffaille theory.  Let $\Mbar$ be the corresponding Fontaine-Laffaille module (using Fact~\ref{fact:tcris}\eqref{tcris:6}), with Fontaine-Laffaille weights $\{w_{\tau,i} \}_{\tau,i}$.  In this section we define and study the Fontaine-Laffaille deformation condition assuming that for each $\ZZ_p$-embedding $\tau : W \into \O$ the Fontaine-Laffaille weights are multiplicity-free as in Remark~\ref{remark:embeddingweights} (the jumps in the filtration are of rank $1$).  This section is a generalization of unpublished results in \cite{patrikis06}, which treat the symplectic case when $K = \QQ_p$.

\subsection{Definitions and Basic Properties}
As $\Vbar$ is a $k$-linear representation of $\Gamma_K$, $\Mbar$ becomes a $k' \tensor{\FF_p} k$-module and in particular a $k$-vector space.  

\begin{defn}
For an Artinian coefficient ring $R$ over $\O = W(k)$, define $\DFL_\rhobar(R)$ to be the collection of deformations $\rho : \Gamma_K \to G(R)$ of $\rhobar$ with similitude character $\nu$ that lie in the essential image of $T_\cris$ (after composing with $G \to \GL_n$) restricted to the full subcategory $\MF_{W,\tor}^{f,[a,b]}$.
Such a deformation is called a \emph{Fontaine-Laffaille deformation}.
\end{defn}

This is a deformation condition: see Corollary~\ref{cor:isdefcond}.  We will analyze it when for each fixed embedding $\tau : W \into \O$ the Fontaine-Laffaille weights of $\rhobar$ are multiplicity-free  (when the jumps in the filtration of each $\Mbar_\tau$ are $1$-dimensional over $k$).  Note that the Fontaine-Laffaille weights of $M$ are the same as the Fontaine-Laffaille weights of $\Mbar$ as each $M^i_\tau$ is a direct summand.

\begin{thm} \label{thm:fl}
If the Fontaine-Laffaille weights are multiplicity-free, $\DFL_\rhobar$ is liftable.  If $B$ is a Borel subgroup of $G$, the dimension of the tangent space of $\DFL_\rhobar$ is
 \[
  [K:\QQ_p] \left( \dim G_k - \dim B_k \right) + H^0(\Gamma_K,\adzerorho).
 \]
If $\rho : \Gamma_K \to G(\O)$ is an inverse limit of Fontaine-Laffaille deformations of $\rhobar$ to $\O/p^n \O$ for all $n\geq 1$, it is a lattice in a crystalline representation with the same Fontaine-Laffaille weights as $\rhobar$.
\end{thm}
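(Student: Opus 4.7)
The plan is to translate every claim into semi-linear algebra via the covariant Fontaine-Laffaille functor. Because $b - a \leq \frac{p-2}{2}$, Lemma~\ref{lem:fltensorr} and Lemma~\ref{lem:compatibleduals} apply, and a Fontaine-Laffaille deformation $\rho \in \DFL_\rhobar(R)$ corresponds functorially to a pair $(M, \langle \cdot, \cdot \rangle)$ in which $M$ is an $R$-module object of $\MF_{W,\tor}^{f,[a,b]}$ lifting $\Mbar$ --- necessarily free as an $R$-module by Lemma~\ref{lem:rfree} --- together with a perfect pairing $\langle \cdot, \cdot \rangle : M \tensor{W \tensor{\ZZ_p} R} M \to L$ of Fontaine-Laffaille modules that is symmetric when $G = \GO_r$ and alternating when $G = \GSp_r$. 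Compatibility of this translation with fibered products of coefficient rings shows that $\DFL_\rhobar$ is in fact a deformation condition, and the three assertions of the theorem become statements about these enhanced Fontaine-Laffaille modules.

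For liftability, let $R' \onto R$ be a small surjection with kernel $I$ satisfying $\m_{R'} I = 0$, and lift $(M, \langle \cdot, \cdot \rangle)$ in three stages. First, lift $M$ to a free $R'$-module $M'$ of the same rank and lift the filtration to a direct-summand filtration on $M'$: by Lemma~\ref{lem:rfree} each $M_\tau^i$ is a free $R$-module direct summand of $M_\tau$, and the distinct-weights hypothesis forces each successive quotient to be free of rank one, so this step is routine. Second, choose any $R'$-bilinear lift of $\langle \cdot, \cdot \rangle$ to $M'$; perfection lifts automatically, and adjusting by an element of $I \tensor{R} \Hom_R(M \tensor{R} M, L)$ makes the lift compatible with the lifted filtration while preserving the required symmetry or alternating property. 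The third and most delicate stage is lifting the semi-linear maps $\varphi^i_{M'}$ compatibly both with the relation $p \varphi^{i+1} = \varphi^i|_{M^{i+1}}$ and with the pairing. I proceed by induction from the top of the filtration downward: the pairing compatibility $\varphi^{i+j}_{L'}(\langle m, m' \rangle) = \langle \varphi^i_{M'}(m), \varphi^j_{M'}(m') \rangle$ becomes, after fixing lifts in high filtration degrees, an explicit linear constraint valued in a one-dimensional $I$-space per embedding $\tau : W \into \O$, which can be killed by adjusting the lift on a complementary rank-one piece of the filtration. This is where I expect the main obstacle, and the pairwise-distinct-weights hypothesis is essential to reduce the obstruction to this one-dimensional setting.

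Granted liftability, the universal lifting ring for $\DFL_\rhobar$ is formally smooth over $\O$, so the tangent space of the deformation functor is computed by counting first-order lifts of $(\Mbar, \langle \cdot, \cdot \rangle)$ to $k[\epsilon]$ modulo strict equivalence. Fixing the underlying free $k[\epsilon]$-module up to isomorphism, lifts of the filtration compatible with the pairing correspond, for each embedding $\tau : W \into \O$, to a tangent vector to the corresponding partial flag variety of $G$, contributing $[K:\QQ_p](\dim G_k - \dim B_k)$ parameters; with the filtration fixed, lifts of the $\varphi^i$ satisfying the pairing compatibility are essentially rigid. Adding back the $H^0(\Gamma_K, \adzerorho)$-dimensional space of infinitesimal centralizer automorphisms that act trivially on the deformation produces the claimed formula $[K:\QQ_p](\dim G_k - \dim B_k) + H^0(\Gamma_K, \adzerorho)$ for the tangent space $H^1_{\DFL}(\Gamma_K, \adzerorho)$; this generalizes the computation for $\GL_n$ in \cite[\S 2.4.2]{cht08}. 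Finally, for the crystalline statement, full faithfulness of $T_\cris$ (Fact~\ref{fact:tcris}\eqref{tcris:1}) lifts an inverse system of Fontaine-Laffaille deformations $\{\rho_n\}$ to a compatible inverse system $\{M_n\}$ in $\MF_{W,\tor}^{f,[a,b]}$; the limit $M := \inverselimit M_n$ lies in $\DK$ by Definition~\ref{defn:dieudonne}, so $T_\cris(M) \simeq \inverselimit \rho_n = \rho$ by Fact~\ref{fact:tcris}\eqref{tcris:2}, and then $T_\cris(M)[\tfrac{1}{p}]$ is crystalline by Fact~\ref{fact:tcris}\eqref{tcris:5}, with Hodge-Tate weights equal to the Fontaine-Laffaille weights of $\Mbar$ via Remark~\ref{rmk:weights}.
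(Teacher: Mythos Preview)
Your overall architecture---translate to Fontaine--Laffaille modules with pairing via Corollary~\ref{cor:pairingcompat}, then treat liftability, the tangent space, and crystallinity separately---is exactly the paper's strategy, and your treatment of the crystalline assertion matches Proposition~\ref{prop:charzero}. The liftability sketch points in the right direction but understates the combinatorics: the pairing compatibility for $\varphi^i_{M'}$ is not a single one-dimensional constraint per embedding but a system of roughly $\binom{r}{2}$ equations in $r^2$ unknowns for each $\tau$, and its solvability is not formal. The paper organizes these equations as a single linear map $T$ between $I$-vector spaces and proves surjectivity by exhibiting a block lower-triangular structure whose diagonal blocks are, up to signs, submatrices of the change-of-basis matrix from the $\{\varphi^{w_i}(v_i)\}$ to the $\{n_i\}$; these are surjective precisely because $\{\varphi^{w_i}(v_i)\}$ is a basis of $M'_{\sigma\tau}$.

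The tangent-space argument, however, has a genuine gap. Your claim that ``with the filtration fixed, lifts of the $\varphi^i$ satisfying the pairing compatibility are essentially rigid'' is false, and the paper establishes precisely the opposite. Using Lemma~\ref{lem:standardpairing}, every first-order deformation can be brought to a representative in which the underlying module, the pairing, \emph{and} the filtration are all standard; the residual freedom lies entirely in the $\varphi^i$. Writing $\varphi^i_M(n + tn') = \varphi^i_{M_0}(n) + t(\varphi^i_{M_0}(n') + \delta_i(n))$, the $\delta_i$ assemble into a $\delta' \in \End_{k \otimes W}(M_0,\langle\cdot,\cdot\rangle) \cong \mathfrak{sp}_r(k\otimes W)$ or $\mathfrak{so}_r(k\otimes W)$, a space of $k$-dimension $[K:\QQ_p](\dim G_k - 1)$. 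Strict equivalence acts through $\alpha \in \Fil^0 \End_{k\otimes W}(M_0,\langle\cdot,\cdot\rangle)$, the Lie algebra of a Borel, via $\delta_i \mapsto \delta_i + \alpha\varphi^i_{M_0} - \varphi^i_{M_0}\alpha$, with kernel $\End_{\MF_W}(M_0,\langle\cdot,\cdot\rangle) \cong H^0(\Gamma_K,\adzerorho)$. The resulting four-term exact sequence yields the formula. Thus the contribution $[K:\QQ_p](\dim G_k - \dim B_k)$ arises as $\dim \g' - \dim \mathfrak{b}'$ from $\varphi$-deformations modulo filtration-preserving gauge, not from a flag-variety parameter space of filtrations; your count reaches the same number only because $\dim(G/B) = \dim\g' - \dim\mathfrak{b}'$, but the argument as written does not establish it.
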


The proof of this theorem will occur over the remainder of this section.  The key pieces are Proposition~\ref{prop:charzero}, Proposition~\ref{prop:flliftable}, and Proposition~\ref{prop:fltangent}.

%
%

To understand $\DFL_{\rhobar}$, we must express the orthogonal or symplectic pairing in the language of Fontaine-Laffaille modules.  For a Galois module $V$ which is a free $R$-module, recall we defined $V^\vee = \Hom_{R[\Gamma_K]}(V,\nu)$.  For a deformation of $\rhobar$ to a coefficient ring $R$, we obtain an $R[\Gamma_K]$-module $V$ together with an isomorphism $\eta : V \simeq V^\vee$ coming from the pairing.   Let $\epsilon = 1$ for $G = \GO_r$ and $\epsilon = -1$ for $G = \GSp_{r}$.  The fact that $ \langle v,w \rangle = \epsilon \langle w,v\rangle$ is equivalent to $\eta^* = \epsilon \eta$, where $\eta^*$ is the map $V \simeq V^{\vee \vee}  \to V^\vee$ induced by double duality.

\begin{lem}
For a coefficient ring $R$, suppose $V$ is a lift of $\Vbar$ as an $R[\Gamma_K]$-module that is finite free over $R$, corresponding to a Fontaine-Laffaille module $M$ that is finite free over $R$.  An isomorphism of $R[\Gamma_K]$-modules \[\eta: V \simeq V^\vee\] such that $\eta(v)(w) = \epsilon \eta(w)(v)$ is equivalent to an $R$-linear isomorphism of Fontaine-Laffaille modules
\[
 \gamma : M \simeq M^\vee
\]
such that $\gamma(m)(n) = \epsilon \gamma(n)(m)$.
\end{lem}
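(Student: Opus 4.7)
The plan is to use that $T_\cris$ is fully faithful on the relevant full subcategory together with the duality-compatibility established in Lemma~\ref{lem:compatibleduals} to transport data back and forth between the Galois and Fontaine-Laffaille sides. First I would construct the bijection on morphisms: given an $R$-linear morphism of Fontaine-Laffaille modules $\gamma : M \to M^\vee$, apply $T_\cris$ to obtain a $\Gamma_K$-equivariant map $T_\cris(\gamma) : V = T_\cris(M) \to T_\cris(M^\vee)$, and then compose with the natural isomorphism $\beta_M : T_\cris(M^\vee) \simeq V^\vee$ from Lemma~\ref{lem:compatibleduals} to obtain $\eta : V \to V^\vee$. In the reverse direction, given $\eta : V \to V^\vee$, the map $\beta_M^{-1} \circ \eta : T_\cris(M) \to T_\cris(M^\vee)$ lifts uniquely to an $R$-linear morphism $\gamma : M \to M^\vee$ in $\MF_{W,\tor}^{f,[a,b]}$ by full faithfulness (since both $M$ and $M^\vee$ are naturally $R$-module objects and full faithfulness respects the $R$-module structure induced by functoriality). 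These two constructions are inverse to each other by naturality of $\beta_M$.

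Next I would show that isomorphisms correspond. Because $T_\cris$ is exact and fully faithful on $\MF_{W,\tor}^{f,[a,b]}$, a morphism $\gamma$ is an isomorphism in $\MF_{W,\tor}^{f,[a,b]}$ if and only if $T_\cris(\gamma)$ is an isomorphism of $R[\Gamma_K]$-modules, equivalently (via $\beta_M$) if and only if $\eta$ is an isomorphism.

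The remaining point is to match up the $(\pm)$-symmetry conditions. Reformulate $\gamma(m)(n) = \epsilon\, \gamma(n)(m)$ as the identity $\gamma^\vee = \epsilon \gamma$, where $\gamma^\vee : M \simeq M^{\vee\vee} \to M^\vee$ uses the biduality isomorphism; likewise, $\eta(v)(w) = \epsilon\, \eta(w)(v)$ is equivalent to $\eta^\vee = \epsilon \eta$ under the Galois-side biduality. By the second assertion of Lemma~\ref{lem:compatibleduals}, applying $T_\cris$ to $\gamma^\vee$ gives $T_\cris(\gamma)^\vee$, and under the identifications provided by $\beta_M$ and $\beta_{M^\vee}$ this translates exactly to $\eta^\vee$. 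Thus $\gamma^\vee = \epsilon \gamma$ if and only if $\eta^\vee = \epsilon \eta$, which finishes the equivalence.

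The main obstacle (and the only place where something nontrivial happens beyond invoking full faithfulness) is checking that the biduality isomorphisms $M \simeq M^{\vee\vee}$ and $V \simeq V^{\vee\vee}$ are compatible with $T_\cris$ and with $\beta_M$, so that taking $\vee$ on one side really corresponds to taking $\vee$ on the other. This comes down to the fact that the evaluation pairing $M \tensor{W \tensor{\ZZ_p} R} M^\vee \to L$ corresponds, via the tensor-product compatibility of Lemma~\ref{lem:fltensorr}, to the Galois-equivariant evaluation pairing $V \tensor{R} V^\vee \to R(\nu)$; this is precisely the pairing constructed in the proof of Lemma~\ref{lem:compatibleduals}, so the identification is canonical and the diagrams match. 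With this in hand, the whole argument reduces to formal manipulation of functors and natural isomorphisms.
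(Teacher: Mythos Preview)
Your argument is correct and follows essentially the same route as the paper: invoke Lemma~\ref{lem:compatibleduals} to identify $T_\cris(M^\vee)$ with $V^\vee$, use full faithfulness of $T_\cris$ to pass between $\gamma$ and $\eta$ and to detect isomorphisms, and then translate the $\epsilon$-symmetry condition via the biduality maps. The paper compresses the last step into ``a straightforward check shows that $T_\cris$ carries $\eta^*$ to $\gamma^*$,'' whereas you spell out that this check amounts to compatibility of the evaluation pairings under Lemma~\ref{lem:fltensorr}; this is a welcome elaboration but not a different idea.
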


\begin{proof}
As the Hodge-Tate weights of $\rhobar$ lie in an interval of length $\frac{p-2}{2}$, Lemma~\ref{lem:fltensorr} and Lemma~\ref{lem:compatibleduals} hold.  In particular, $T_\cris(M^\vee) = T_\cris(M)^\vee$.  As $T_\cris$ is fully faithful in this range, we see that a map $\eta$ is equivalent to a map $\gamma$, and one is an isomorphism if and only if the other one is.  It remains to check that $\gamma$ is symmetric or alternating if and only if $\eta$ is.  Let $\eta^*$ and $\gamma^*$ denote the isomorphisms respectively given by
\[
 V \simeq V^{\vee \vee} \overset{\eta^\vee} \to V^\vee \quad \text{and}  \quad M \simeq M^{\vee \vee} \overset{\gamma^\vee} \to M^\vee.
\]
A straightforward check shows that $T_\cris$ carries $\eta^*$ to $\gamma^*$, and hence $\eta = \epsilon \eta^*$ if and only if $\gamma = \epsilon \gamma^*$.
\end{proof}

\begin{lem}
An $R$-linear isomorphism of Fontaine-Laffaille modules $\gamma : M \simeq M^\vee$ for which $\gamma(m)(n)  = \epsilon \gamma(n)(m)$ is equivalent to a perfect $\epsilon$-symmetric $W\tensor{\ZZ_p} R$-bilinear pairing $\langle \cdot , \cdot \rangle : M \times M \to L_R$ satisfying
\begin{itemize}
 \item $\langle M^i, M^j \rangle \subset L^{i+j }$;
 \item $\langle \varphi^i_M(m), \varphi^j_M(n) \rangle = \varphi^{i+j}_L \langle m,n\rangle$.
\end{itemize}
\end{lem}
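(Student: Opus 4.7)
The plan is to make the equivalence completely explicit by writing down the two directions of the bijection and verifying that each structural condition on $\gamma$ matches the corresponding condition on the pairing. Given an $R$-linear morphism $\gamma : M \to M^\vee$, define $\langle m, n \rangle := \gamma(m)(n) \in L$. Conversely, given a $W \tensor{\ZZ_p} R$-bilinear pairing $\langle \cdot, \cdot \rangle : M \times M \to L$, define $\gamma : M \to M^\vee$ by $\gamma(m) := \langle m, - \rangle$. The image $\gamma(m)$ is indeed $W$-linear in its argument because the pairing is $W$-bilinear, and $\gamma$ itself is $R$-linear because the pairing is $R$-linear in the first slot. These two assignments are inverse to each other essentially by unraveling definitions.

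Next I would verify that $\gamma$ is an isomorphism if and only if the pairing is perfect: since $M^\vee = \Hom_{R \tensor{\ZZ_p} W}(M,L)$ was defined precisely so that an isomorphism $\gamma$ expresses $M$ as its own $L$-dual, this is immediate from the definition of perfectness of a bilinear pairing into an invertible module. The $\epsilon$-symmetry translation $\gamma(m)(n) = \epsilon \gamma(n)(m) \iff \langle m,n\rangle = \epsilon \langle n,m\rangle$ is tautological. For the filtration compatibility, observe that $\gamma(M^i) \subset \Fil^i M^\vee$ unpacks, via the definition $\Fil^i M^\vee = \{\psi : \psi(M^j) \subset L^{i+j} \text{ for all } j\}$, to the condition that $\gamma(m)(n) \in L^{i+j}$ whenever $m \in M^i$ and $n \in M^j$; this is exactly $\langle M^i, M^j \rangle \subset L^{i+j}$.

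The remaining (and only mildly technical) step is the compatibility with the $\varphi^i$'s, which I expect to be the main point worth writing carefully. The condition that $\gamma$ is a morphism of Fontaine–Laffaille modules gives $\gamma \circ \varphi^i_M = \varphi^i_{M^\vee} \circ \gamma|_{M^i}$ for all $i$. Evaluating both sides on an arbitrary element of the form $\varphi^j_M(n)$ with $n \in M^j$, and using the defining relation of Definition~\ref{def:mfrdual}, namely $\varphi^i_{M^\vee}(\psi)(\varphi^j_M(n)) = \varphi^{i+j}_L(\psi(n))$, gives on the left $\langle \varphi^i_M(m), \varphi^j_M(n) \rangle$ and on the right $\varphi^{i+j}_L\langle m, n \rangle$. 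Conversely, since the images of all $\varphi^j_M : M^j \to M$ generate $M$ additively (as in the proof of Lemma~\ref{lem:flduality1}), the identity $\gamma \circ \varphi^i_M = \varphi^i_{M^\vee} \circ \gamma|_{M^i}$ can be verified on such generators, so the pairing identity $\langle \varphi^i_M(m), \varphi^j_M(n)\rangle = \varphi^{i+j}_L\langle m,n\rangle$ is not just necessary but sufficient.

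Combining these steps gives the equivalence. The only obstacle worth flagging is to be careful in the last step that the characterization of $\varphi^i_{M^\vee}$ by its values on $\varphi^j_M(m)$ is legitimate, i.e.\ that such elements span $M$; this is the same spanning fact that made $\varphi^i_{M^\vee}$ well-defined in the first place, so no new content is required.
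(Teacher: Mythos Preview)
Your proposal is correct and follows essentially the same approach as the paper's proof: both unwind the definition of $M^\vee$ (filtration and $\varphi^i_{M^\vee}$) to translate each structural condition on $\gamma$ into the corresponding pairing condition, with the $\varphi$-compatibility checked by evaluating on elements $\varphi^j_M(n)$ and invoking the fact that these span $M$. Your write-up is slightly more explicit about the converse direction and the spanning argument, but there is no substantive difference.
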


\begin{proof}
This is just writing out what $\gamma : M \to M^\vee$ being a morphism of Fontaine-Laffaille modules means for the pairing $\langle m,n \rangle = \gamma(m)(n)$.  

For $\gamma$ to preserve the filtration says exactly that 
\[
 \gamma(M^i ) \subset \Fil^i M^\vee = \{ \psi \in \Hom_{W \tensor{\ZZ_p} R}(M,L) : \psi(M^j) \subset L^{i+j} \}.
\]
This is equivalent to $\langle M^i, M^j \rangle \subset L^{i+j}$ for all $i,j$.  The compatibility of $\gamma$ with the $\varphi$'s says exactly that for $m \in M^i$
\[
 \varphi_{M^\vee}^i(\gamma(m)) = \gamma( \varphi^i_M(m)).
\]
Evaluating on any $\varphi^j_M(n) \in M$ and using the definition of $M^\vee$ we see 
\[
\varphi_{M^\vee}^i(\gamma(m))(\varphi^j_M(n))  = \varphi^{i+j}_L(\gamma(m)(n)) = \varphi^{i+j}_L(\langle m,n \rangle).
\]
Evaluating $\gamma( \varphi^i_M(m))$, we see that
\[
 \gamma( \varphi^i_M(m))(\varphi^j_M(n)) = \langle \varphi^i_M(m) , \varphi^j_M(n) \rangle.
\]
Thus, $\gamma$ being compatible with the $\varphi$'s is equivalent to $\langle \varphi^i_M(m) , \varphi^j_M(n) \rangle = \varphi^{i+j}_L(\langle m,n \rangle)$.
\end{proof}

In particular, the pairing $\Vbar \times \Vbar \to \nubar$ gives a perfect pairing $\langle \cdot , \cdot \rangle_{\Mbar} : \Mbar \times \Mbar \to \overline{L}$ .  

\begin{cor} \label{cor:pairingcompat}
For a coefficient ring $R$, $T_{\cris}$ gives a bijection between deformations $\rho \in \DFL_\rhobar(R)$ and isomorphism classes of Fontaine-Laffaille modules $ M \in \MF_{W,\tor}^{f,[a,b]}$ that are free as $R$-modules and for which there exists a perfect $\epsilon$-symmetric $W \tensor{\ZZ_p} R$-bilinear pairing $\langle \cdot , \cdot \rangle : M \times M \to L_R$ satisfying
\begin{itemize}
 \item $\langle M^i, M^j \rangle \subset L^{i+j }$;
 \item $\langle \varphi^i_M(m), \varphi^j_M(n) \rangle = \varphi^{i+j}_L \langle m,n\rangle$
\end{itemize}
together with an isomorphism of the reduction of $(M,\langle \cdot, \cdot \rangle)$ with $(\Mbar, \langle \cdot , \cdot \rangle_{\Mbar})$.
\end{cor}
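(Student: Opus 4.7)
The plan is to chain together the two unnamed lemmas immediately preceding the corollary, together with Lemma~\ref{lem:rfree} and the full faithfulness of $T_{\cris}$ on $\MF_{W,\tor}^{f,[a,b]}$ (which applies since $b - a \leq \frac{p-2}{2} \leq p-2$). Concretely, unpack the definition of $\DFL_\rhobar(R)$: a deformation $\rho$ amounts to a lift $V$ of $\Vbar$ as a free $R[\Gamma_K]$-module, equipped with the similitude pairing packaged as an $R[\Gamma_K]$-isomorphism $\eta: V \simeq V^\vee$ with $\eta^* = \epsilon \eta$, together with the tautological identification of $V \otimes_R k$ with $\Vbar$ carrying $\eta \otimes_R k$ to the pairing on $\Vbar$ coming from $\rhobar$. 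By the definition of $\DFL_\rhobar$, $V = T_\cris(M)$ for some $M \in \MF_{W,\tor}^{f,[a,b]}$, and Lemma~\ref{lem:rfree} lets us transfer the $R$-module structure and $R$-freeness from $V$ to $M$.

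Next, the first preceding lemma produces a bijection between $\epsilon$-symmetric $R$-linear isomorphisms $\eta: V \simeq V^\vee$ and $\epsilon$-symmetric $R$-linear isomorphisms $\gamma: M \simeq M^\vee$ of Fontaine-Laffaille modules, and the second preceding lemma produces a bijection between such $\gamma$ and perfect $\epsilon$-symmetric $W \tensor{\ZZ_p} R$-bilinear pairings $\langle \cdot, \cdot \rangle: M \times M \to L_R$ satisfying the two displayed compatibilities with the filtration and the $\varphi^i$. Composing these bijections yields the claimed bijection at the level of pairs (module with pairing) before imposing the reduction identification.

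It remains to match up the reduction isomorphisms on the two sides. Here I would apply $T_\cris$ to the short exact sequence $0 \to \m_R M \to M \to M/\m_R M \to 0$: by exactness and covariance of $T_\cris$ (Fact~\ref{fact:tcris}\eqref{tcris:1}), this recovers $V \otimes_R k \simeq T_\cris(M/\m_R M)$, and $T_\cris$ carries the mod-$\m_R$ reduction of the pairing on $M$ to the mod-$\m_R$ reduction of $\eta$ (via Lemma~\ref{lem:fltensorr} applied to the tensor-product incarnation of the pairing, and Lemma~\ref{lem:compatibleduals} for the identification of duals). Hence the given identification $V \otimes_R k \simeq \Vbar$ compatible with pairings corresponds under $T_\cris$ to an isomorphism of pairing data $(M/\m_R M, \langle \cdot,\cdot\rangle \bmod \m_R) \simeq (\Mbar, \langle\cdot,\cdot\rangle_{\Mbar})$, and conversely.

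There is no serious obstacle here: the content lies in the two preceding lemmas (identifying the Fontaine-Laffaille incarnation of the similitude datum) and in Lemma~\ref{lem:rfree} (keeping track of $R$-freeness). The only thing requiring care is the bookkeeping of the reduction identification, and this is a routine consequence of the exactness and tensor-compatibility of $T_\cris$ established in Lemma~\ref{lem:fltensorr} and Lemma~\ref{lem:compatibleduals}.
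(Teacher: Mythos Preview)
Your overall strategy matches the paper's: combine the two preceding lemmas with Lemma~\ref{lem:rfree} and the full faithfulness of $T_\cris$. However, you have glossed over the one point the paper flags as subtle.

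When you write that ``a deformation $\rho$ amounts to a lift $V$ of $\Vbar$ as a free $R[\Gamma_K]$-module, equipped with \ldots\ $\eta: V \simeq V^\vee$ with $\eta^* = \epsilon \eta$, together with the tautological identification of $V \otimes_R k$ with $\Vbar$ carrying $\eta \otimes_R k$ to the pairing on $\Vbar$,'' you are asserting without proof an equivalence that is not quite formal. A deformation in $\DFL_\rhobar(R)$ is by definition a strict-equivalence class of homomorphisms $\rho: \Gamma_K \to G(R)$, where $G \subset \GL_r$ is the similitude group of a \emph{fixed} $\epsilon$-symmetric form. Starting from $(M,\langle\cdot,\cdot\rangle)$ on the Fontaine--Laffaille side, applying $T_\cris$ yields a free $R$-module $V$ with \emph{some} perfect $\epsilon$-symmetric pairing reducing to the one on $\Vbar$; but to obtain a homomorphism into $G(R)$ one must exhibit an $R$-basis of $V$, reducing to the chosen basis of $\Vbar$, in which the pairing becomes the fixed form defining $G$. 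Equivalently, one needs: any two perfect $\epsilon$-symmetric $R$-valued forms on $R^r$ that agree modulo $\m_R$ are carried to one another by some $g \in \GL_r(R)$ with $g \equiv \Id \pmod{\m_R}$.

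The paper supplies exactly this argument: by induction along small extensions $R' \to R'/I$ with $\dim_k I = 1$, writing the two Gram matrices as $J_0 + \epsilon J_1$ and $J_0 + \epsilon J_1'$ and solving ${}^t(\Id + \epsilon A)(J_0 + \epsilon J_1)(\Id + \epsilon A) = J_0 + \epsilon J_1'$, which reduces to the surjectivity of $A \mapsto {}^tA J_0 + J_0 A$ from $\gl_r(k)$ onto the space of $\epsilon$-symmetric matrices. You should insert this step; without it the map from the Fontaine--Laffaille side back to $\DFL_\rhobar(R)$ is not fully constructed.
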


\begin{proof}
This essentially follows by combining the two previous lemmas.  
Note that the pairing $\langle \cdot ,\cdot \rangle$ is automatically perfect as it lifts the perfect pairing $\langle \cdot , \cdot \rangle_{\Mbar}$.  One subtle point is that given such an $M$ with a pairing, we obtain an $\epsilon$-symmetric pairing on the corresponding $V$, but this pairing might not be the one used to define $G$ so the representation would not take values in $G(R)$.  However, after conjugation by an element of $\GL_r(R)$ that reduces to the identity modulo $\m_R$ the pairings will agree.

To show this, pick a basis and suppose that $J$ and $J'$ are matrices for $\epsilon$-symmetric pairings over $R$ that are equal modulo $R/I$, where $R \to R/I$ is a small extension and $I$ is dimension $1$ as a module over $R/\m_R = k$.  Picking a generator $\epsilon$ for $I$ and writing $J = J_0 + \epsilon J_1$ and $J' = J_0 + \epsilon J_1'$, we seek $A \in \gl_r$ such that
\[
 {}^t(\Id + \epsilon A) (J_0 + \epsilon J_1) (\Id + \epsilon A) = J_0 + \epsilon J_1'.
\]
Such an $A$ exists since the map $A \mapsto {}^t A J_0 + J_0 A$ is a surjection from $\gl_r$ to the space of $\epsilon$-symmetric $r$ by $r$ matrices over the field $k$.  The desired result follows by induction, using as base case the fact that $\Mbar$ arose from a representation $\rhobar$ valued in $G(k)$.
\end{proof}

\begin{cor} \label{cor:isdefcond}
$\DFL_\rhobar$ is a deformation condition.
\end{cor}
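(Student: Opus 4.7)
The plan is to apply Corollary~\ref{cor:pairingcompat} to translate the problem into one about Fontaine-Laffaille modules with compatible perfect $\epsilon$-symmetric pairings, and then verify the two axioms of a lifting condition from \S\ref{defsection} directly at that level.

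The first axiom---closure under strict equivalence---is immediate from the correspondence of Corollary~\ref{cor:pairingcompat}: if $\rho \in \DFL_\rhobar(R)$ arises from a triple $(M,\langle\cdot,\cdot\rangle, \iota)$ and $\rho' = g\rho g^{-1}$ for $g \in G(R)$ reducing to the identity, then $\rho'$ has the same underlying $R[\Gamma_K]$-module up to isomorphism (respecting the pairing), so it comes from the same triple up to isomorphism in the sense of Corollary~\ref{cor:pairingcompat}, and is therefore still in $\DFL_\rhobar(R)$.

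The second axiom is compatibility with a Cartesian diagram $A := A_1 \times_{A_0} A_2$ in $\C_\O$. The ``only if'' direction is immediate: base-change of a Fontaine-Laffaille module with pairing from $A$ to $A_i$ is carried by the exact functor $T_\cris$ to the corresponding base change of Galois modules. For the ``if'' direction, suppose $\rho \in \D^\square_\rhobar(A)$ pushes forward to $\rho_i \in \DFL_\rhobar(A_i)$ arising via Corollary~\ref{cor:pairingcompat} from $(M_i,\langle\cdot,\cdot\rangle_i)$ for $i = 1,2$. Functoriality and the full faithfulness of $T_\cris$ identify the two base changes to $A_0$ with a common $(M_0,\langle\cdot,\cdot\rangle_0)$. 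Form $M := M_1 \times_{M_0} M_2$ in the abelian category $\MF_W$, equipped with the pairing glued componentwise and valued in $L_1 \times_{L_0} L_2 = L_A$. Then one checks that $M$ lies in $\MF_{W,\tor}^{f,[a,b]}$ (the filtration steps are $M^j = M_1^j \times_{M_0^j} M_2^j$ and the $\varphi^j$ glue); that $M$ is $A$-free (use that each $M_i$ is $A_i$-free with $M_i^j$ an $A_i$-free direct summand by Lemma~\ref{lem:rfree}, together with the standard commutative-algebra fact that a module over a fiber product of rings is free iff its two base changes are compatibly free); and that the glued pairing is perfect (perfection is detected modulo $\m_A$, where it reduces to the given perfect pairing $\langle\cdot,\cdot\rangle_{\Mbar}$). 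Since $T_\cris$ is exact it preserves this fiber product, so
\[
T_\cris(M) \;=\; T_\cris(M_1) \times_{T_\cris(M_0)} T_\cris(M_2),
\]
which is precisely the underlying $A[\Gamma_K]$-module of $\rho$. Hence $\rho \in \DFL_\rhobar(A)$.

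The only technical obstacle is the bookkeeping verification that $M = M_1 \times_{M_0} M_2$ really defines an object of $\MF_{W,\tor}^{f,[a,b]}$ that is $A$-free with a perfect pairing. None of these checks is deep: each amounts to transferring a pointwise property through the fiber product, where the essential input is the freeness-of-summands conclusion of Lemma~\ref{lem:rfree} (to ensure the filtration pieces glue to $A$-free direct summands) and the compatibility of $T_\cris$ with fiber products (which is a consequence of its exactness). With these in hand the corollary follows.
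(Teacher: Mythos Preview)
Your proposal is correct and follows essentially the same approach as the paper: both translate via Corollary~\ref{cor:pairingcompat} into the language of Fontaine-Laffaille modules with pairing, and then rely on the exactness and base-change compatibility of $T_\cris$ (the paper phrases the latter as $R' \tensor{R} T_\cris(M) = T_\cris(R' \tensor{R} M)$, which is exactly what underlies your ``only if'' direction and the sub-functor check). The only difference is one of emphasis: the paper explicitly verifies the sub-functor property and waves at condition~\eqref{defcon2}, whereas you spell out the fiber-product construction for~\eqref{defcon2} in detail and leave the sub-functor check implicit in your base-change remark; the arguments are interchangeable.
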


\begin{proof}
This argument goes back to Ramakrishna~\cite{ram93}, and uses exactness properties of $T_\cris$ on $\MF^{f}_{W,\tor}$, Corollary~\ref{cor:pairingcompat}, and the fact that for a morphism of coefficient rings $R \to R'$, $$R' \tensor{R} T_\cris(M) = T_\cris(R ' \tensor{R} M).$$  For example, to check that $\DFL_\rhobar$ is a sub-functor of $\D_{\rhobar}$, let $R$ be a coefficient ring and $M$ be the Fontaine-Laffaille module corresponding to $\rho \in \DFL_\rhobar(R)$.  Then $R' \tensor{R} T_\cris(M)$ lies in the essential image of $T_\cris$, and $R' \tensor{R} M$ admits a perfect $\epsilon$-symmetric $R'$-bilinear pairing as in Corollary~\ref{cor:pairingcompat} given by extending the pairing on $M$.  This shows that $\rho_{R'} \in \DFL_\rhobar(R')$.  A similar argument checks condition \eqref{defcon2} of being a deformation condition.
\end{proof}

Using Proposition~\ref{fact:tcris}, it is simple to understand characteristic-zero points of the deformation functor.

\begin{prop} \label{prop:charzero}
Suppose we are given a compatible collection of Fontaine-Laffaille deformations $\rho_i : \Gamma_K \to G(R_i)$, where $\{R_i\}$ is a co-final system of artinian quotients of the valuation ring $R$ of a finite extension of $\O[\frac{1}{p}]$ with the same residue field as $\O$.  
Then $\rho = \inverselimit \rho_i$ is crystalline (more precisely, a lattice in a crystalline representation) with indexed tuple of Hodge-Tate weights equal to the corresponding indexed-tuple of Fontaine-Laffaille weights of $\rhobar$.
\end{prop}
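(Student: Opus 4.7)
The plan is to realize $\rho$ as the image under $T_\cris$ of a suitable filtered Dieudonn\'e module, and then invoke Fact~\ref{fact:tcris}\eqref{tcris:5} to conclude crystallinity.

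First, by full faithfulness of $T_\cris$ on $\MF_{W,\tor}^{f,[a,b]}$ together with Corollary~\ref{cor:pairingcompat}, each deformation $\rho_i$ corresponds to a Fontaine-Laffaille module $M_i \in \MF_{W,\tor}^{f,[a,b]}$ that is free over $R_i$ and carries a perfect $\epsilon$-symmetric pairing $M_i \times M_i \to L_{R_i}$. The compatibility of the $\rho_i$, transported through full faithfulness (compare the proof of Corollary~\ref{cor:isdefcond}), yields canonical isomorphisms $M_j \tensor{R_j} R_i \xrightarrow{\sim} M_i$ for $j \geq i$, so the $M_i$ form an inverse system with surjective transition maps. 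Set
\[
 M := \inverselimit_i M_i, \qquad M^k := \inverselimit_i M_i^k, \qquad \varphi^k_M := \inverselimit_i \varphi^k_{M_i},
\]
and equip $M$ with the inverse-limit pairing valued in $L := \inverselimit_i L_{R_i}$.

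The main technical step is to verify $M \in \DK$. By Lemma~\ref{lem:rfree}, each $M_i$ is $R_i$-free of a fixed rank and each $M_i^k$ is a free $R_i$-direct summand of $M_i$ with free complement. Taking the inverse limit and applying Nakayama over the complete local ring $R$ (using $M/\m_R M \simeq \Mbar$) shows that $M$ is $R$-free of the same rank (hence finitely generated over $W$) and that each $M^k$ is an $R$-direct summand of $M$, a fortiori a $W$-direct summand. The surjectivity $\sum_k \varphi^k_M(M^k) = M$ follows by the same reduction-mod-$\m_R$ argument, using that the analogous surjection holds for $\Mbar$ since $\Mbar \in \MF_{W,\tor}^{f,[a,b]}$. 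The filtration remains concentrated in $[a,b]$ with $b-a \leq (p-2)/2 \leq p-2$, so $M \in \DK$; this verification is the principal obstacle, handled by Nakayama once the finite-level freeness from Lemma~\ref{lem:rfree} is in hand.

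With $M \in \DK$, Fact~\ref{fact:tcris}\eqref{tcris:2} gives $T_\cris(M) = \inverselimit_n T_\cris(M/p^n M)$. Writing $p = \pi^e u$ in $R$ for a uniformizer $\pi$ and unit $u$, $R$-freeness together with $M_j \tensor{R_j} R_i = M_i$ identifies $M/p^n M = M_{en}$, so $T_\cris(M/p^n M) = V_{en} = V/p^n V$, where $V_i$ denotes the underlying $R_i$-module of $\rho_i$. Passing to the inverse limit,
\[
 T_\cris(M) = \inverselimit_n V/p^n V = V,
\]
the underlying $R$-module of $\rho$. Fact~\ref{fact:tcris}\eqref{tcris:5} then asserts that $V[\tfrac{1}{p}] = T_\cris(M)[\tfrac{1}{p}]$ is crystalline, so $\rho$ is a lattice in a crystalline representation. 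The matching of weight tuples follows from Remark~\ref{rmk:weights}: the $\tau$-indexed Hodge-Tate weights of $\rho[\tfrac{1}{p}]$ are the $\tau$-indexed jumps in the filtration of $M$, which coincide with the $\tau$-indexed Fontaine-Laffaille weights of $\rhobar$ because each $M^k_\tau$ is an $R$-free direct summand of $M_\tau$ reducing to $\Mbar^k_\tau$.
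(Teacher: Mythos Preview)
Your proof is correct and follows exactly the approach of the paper: form the inverse limit of the Fontaine-Laffaille modules, verify it lies in $\DK$, and apply Fact~\ref{fact:tcris}\eqref{tcris:2} and \eqref{tcris:5} together with Remark~\ref{rmk:weights}. The paper's own proof dismisses the verification that $M\in\DK$ as ``straightforward'' and invokes the same ingredients; you have simply supplied the details (freeness and direct-summand statements via Lemma~\ref{lem:rfree} and Nakayama, identification of $M/p^nM$ with the finite-level modules).
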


\begin{proof}
It is straightforward to verify that the inverse limit of the Fontaine-Laffaille modules corresponding to $\rho_i$ is in $\D_K$.  Then the result follows from combining Fact~\ref{fact:tcris}\eqref{tcris:2} and \eqref{tcris:5}.  Our convention that the cyclotomic character has Hodge-Tate weight $-1$ makes the Hodge-Tate weights and Fontaine-Laffaille weights match (Remark~\ref{rmk:weights}).
\end{proof}

\subsection{Liftability}  In this section, we analyze liftability by constructing lifts of Fontaine-Laffaille modules.  Lifting the underlying module, filtration, and pairing will be relatively easy.  Constructing lifts of the $\varphi^i_M$ compatible with these choices requires substantial work.  
Let $\WFLtau$ denote the Fontaine-Laffaille weights of $\rhobar$ with respect to a $\ZZ_p$-embedding $\tau : W \into \O$, corresponding to the jumps in the filtration of $\Mbar_\tau$.

\begin{prop} \label{prop:flliftable}
 Under the assumption that the Fontaine-Laffaille weights lie in an interval of length $\frac{p-2}{2}$ and are multiplicity-free for each $\tau : W \into \O$, the deformation condition $\DFL_{\rhobar}$ is liftable.
\end{prop}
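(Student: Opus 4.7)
By Corollary~\ref{cor:pairingcompat}, $\DFL_\rhobar$ is liftable iff for every small surjection $R_1 \onto R_0$ of coefficient $\O$-algebras with kernel $I$ and every Fontaine-Laffaille deformation $(M_0, \langle \cdot, \cdot \rangle_0)$ of $(\Mbar, \langle \cdot, \cdot \rangle_{\Mbar})$ to $R_0$, there is a Fontaine-Laffaille lift $(M_1, \langle \cdot, \cdot \rangle_1)$ to $R_1$. The plan is to build this data in three stages: the filtered $R_1 \tensor{\ZZ_p} W$-module $M_1$, the perfect $\epsilon$-symmetric pairing $\langle \cdot, \cdot \rangle_1$, and finally the $\sigma$-semilinear Frobenius maps $\varphi_{M_1}^i$, which are where the real work lies.

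The first two stages are mostly formal and follow the pattern used for $\GL_n$ in \cite[\S2.4.1]{cht08}. By Lemma~\ref{lem:rfree} each $M_{0,\tau}^i$ is an $R_0$-free direct summand of $M_{0,\tau}$, and the multiplicity-free hypothesis forces successive graded pieces to be of rank $0$ or $1$. I lift $M_{0,\tau}$ to a free $R_1$-module $M_{1,\tau}$ and successively lift the filtration to free $R_1$-direct summands of the correct ranks. For the pairing, I first take any $\epsilon$-symmetric $W \tensor{\ZZ_p} R_1$-bilinear lift of $\langle \cdot, \cdot \rangle_0$; the failure to respect the filtration is an $\epsilon$-symmetric $I$-valued obstruction that can be subtracted off, giving a filtration-respecting $\epsilon$-symmetric $\langle \cdot, \cdot \rangle_1$, which is automatically perfect by Nakayama.

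The main work is to construct $\varphi_{M_1}^i$. Choose an $R_1$-basis $\{e_{\tau,s}\}$ of $M_{1,\tau}$ adapted to the filtration so that $e_{\tau,s}$ generates the rank-one jump at $w_{\tau,s} \in \WFLtau$. The telescoping relation $p\varphi^{i+1} = \varphi^i|_{M^{i+1}}$ forces the entire Frobenius structure on $M_1$ to be determined by the values $v_{\tau,s} := \varphi_{M_1}^{w_{\tau,s}}(e_{\tau,s}) \in M_{1,\sigma\tau}$. Picking any $\sigma$-semilinear lifts $v_{\tau,s}$ of $\varphi_{M_0}^{w_{\tau,s}}(e_{\tau,s})$ assembles into $\varphi_{M_1}^i$ satisfying both the telescoping relation and, via Nakayama from the corresponding property for $\Mbar$, the surjectivity $\sum_i \varphi_{M_1}^i(M_1^i) = M_1$. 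The only remaining constraint is the pairing compatibility
\begin{equation*}
 \langle v_{\tau,s}, v_{\tau,t} \rangle_1 = \varphi_L^{w_{\tau,s} + w_{\tau,t}} \langle e_{\tau,s}, e_{\tau,t} \rangle_1,
\end{equation*}
which by construction holds modulo $I$; the $I$-valued defect is $\epsilon$-symmetric in $(s,t)$. Replacing each $v_{\tau,s}$ by $v_{\tau,s} + \delta_{\tau,s}$ with $\delta_{\tau,s} \in I \tensor{k} \Mbar_{\sigma\tau}$ shifts the left-hand side by the $\epsilon$-symmetrization of $\langle \delta_{\tau,s}, \overline{v}_{\tau,t}\rangle_1$, and I will argue that this linear adjustment surjects onto the space of $\epsilon$-symmetric defects.

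The surjectivity of this adjustment map is the principal obstacle, and where the multiplicity-free hypothesis enters essentially: because the Fontaine-Laffaille weights are pairwise distinct, the reductions $\overline{v}_{\tau,s}$ in fact form a $k$-basis of $\Mbar_{\sigma\tau}$ (using $\sum_i \varphi^i(\Mbar^i) = \Mbar$ together with a dimension count, as in Lemma~\ref{lem:rfree}), so the perturbations $\delta_{\tau,s}$ provide maximal independent freedom to correct each pairing-matrix entry. A dimension count pair-by-pair in $(s,t)$ then shows surjection onto the defect space in both the symmetric and alternating cases, completing the construction.
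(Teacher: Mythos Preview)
Your proposal is correct and follows the same three-stage architecture as the paper: lift the filtered module, then the pairing, then the Frobenius maps, with the last stage requiring a perturbation $\delta_{\tau,s}$ to enforce pairing compatibility.  The paper likewise reduces to showing that the perturbation map surjects onto the space of $\epsilon$-symmetric $I$-valued defects, and likewise uses that the images $\overline{v}_{\tau,s}$ form a $k$-basis of $\Mbar_{\sigma\tau}$ (this is Lemma~\ref{lem:basis}).

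Where you diverge is in how you establish this surjectivity.  The paper first invests in Lemma~\ref{lem:standardpairing} to choose the basis so that the pairing on $M_{\sigma\tau}$ is already in standard (anti-)diagonal form; this makes the adjustment map block lower-triangular with explicitly surjective diagonal blocks (Lemmas~\ref{lem:ti} and the following, assembled in Corollary~\ref{cor:surjective}).  You instead work with an arbitrary lifted pairing and appeal to the perfectness of the residual pairing.  Your phrase ``dimension count pair-by-pair'' undersells the actual argument: writing $\delta_{\tau,s}=\sum_j a_{sj}\,\overline{v}_{\tau,j}$ and letting $B=(\langle \overline{v}_{\tau,s},\overline{v}_{\tau,t}\rangle)$ be the (invertible) Gram matrix, the adjustment sends $A=(a_{sj})$ to $AB+\epsilon(AB)^{T}$, so after the substitution $D=AB$ one needs $D\mapsto D+\epsilon D^{T}$ to surject onto $\epsilon$-symmetric matrices, which holds because $p\neq 2$.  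This is shorter than the paper's route and avoids Lemma~\ref{lem:standardpairing} entirely; the paper's explicit standard-form bookkeeping, on the other hand, makes the odd-orthogonal case (the self-paired basis vector) completely transparent and never invokes invertibility of a general Gram matrix.
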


Let $\rho : \Gamma_K \to G(R)$ be a Fontaine-Laffaille deformation of $\rhobar$.  Let $M$ and $\Mbar$ be the corresponding Fontaine-Laffaille modules for $\rho$ and $\rhobar$, which decompose as 
\[
 M = \bigoplus_\tau M_\tau \quad \text{and} \quad \Mbar = \bigoplus_\tau \Mbar_\tau.
\]
Each $M_\tau$ is a free $R$-module by Lemma~\ref{lem:rfree}.  Furthermore, the filtration $\{M_\tau^i\}$ on $M_\tau$ is given by $R$-module direct summands and $\varphi^i_M(M^i_\tau) \subset M_{\sigma \tau}$.  In particular, there exist free rank-$1$ $R$-modules $N^{w_{\tau,i}}_\tau \subset M_\tau^{w_{\tau,i}}$ such that $M_\tau^{w_{\tau,i} }= N_\tau^{w_{\tau,i}} \oplus M_\tau^{w_{\tau,i}+1}$. 
As the pairing is $\O$-bilinear, the pairings $M_\tau \times M_\tau \to L_\tau$ are collectively equivalent to the pairing $M \times M \to L$, so to lift the pairing and check compatibility it suffices to do so on $M_\tau$.   We also fix a basis for each $L_\tau$, so we may  talk about the value of the pairings.  Thus to analyze liftability of $M$, we will work with each $M_\tau$ separately using $R \tensor{\ZZ_p} W = \prod_{\tau} R_\tau$ with $\tau$ varying through $\ZZ_p$-embeddings $W \into \O \to R$.

By a \emph{basis} for $M_\tau$, we mean a basis for it as an $R$-module.  By Lemma~\ref{lem:rfree}, the rank of $M_\tau$ is $r$. 
For $G = \GSp_r$ with $r$ even, the \emph{standard alternating pairing} with respect to a chosen basis is the one given by the block matrix
\[
 \blockmatrix{0}{I'_{r/2}}{-I'_{r/2}}{0}
\]
where $I'_m$ denotes the anti-diagonal matrix with $1$'s on the diagonal.
For $G = \GO_r$, the \emph{standard symmetric pairing} with respect to the basis is the one given by the matrix $I'_r$.


\begin{example} \label{ex:pairingshape}
Take $R = k$ and fix an embedding $\tau : W \into \O$.  Let $w_1, \ldots , w_r$ be the Fontaine-Laffaille weights of $M_\tau$, and recall that $w_i  + w_{r+1-i} = s_\tau$ because $M \simeq M^\vee$.  Pick $v_i \in M_\tau^{w_i} - M_\tau^{w_i+1}$.  Since $\varphi^i_M |_{M_{i+1}} = p \varphi^{i+1}_M = 0$,
\[
 M_{\sigma \tau} = \sum_{i} \varphi^{i} (M_\tau^i) = \on{span}_k \varphi^{w_i}_M (v_i).
\]
Note that $\{ \varphi^{w_i}_M (v_i)\}$ is a $k$-basis for $M_{\sigma \tau}$, as the left side has $k$-dimension $r$ and there are $r$ Fontaine-Laffaille weights for $\tau$.
Furthermore, compatibility with the pairing means that
\[
 \langle \varphi^{w_i}_M (v_i) , \varphi^{w_j}_M (v_j) \rangle = \varphi^{w_i + w_j}_{L}( \langle v_i, v_j \rangle).
\]
But $\varphi^{h}_L|_{L_\tau} =0$ unless $h = s_\tau$: for $h > s_\tau$ this is because $L_\tau^h =0$, while for $h< s_\tau$ this is because $L^h_\tau = L_\tau^{h+1} = L_\tau$ and $\varphi^{h}_L|_{L_\tau^{h+1}} = p \varphi^{h+1}_L =0$.  Thus $ \langle \varphi^{w_i}_M (v_i) , \varphi^{w_j}_M (v_j) \rangle =0$ unless $w_i + w_j = s_\tau$, in which case the pairing must be non-zero as it is perfect.  If $i \neq j$, by rescaling $v_i$ we may arrange for $\langle \varphi^{w_i}_M (v_i) , \varphi^{w_j}_M (v_j) \rangle$ to be an arbitrary unit.  For $G = \GSp_{r}$ or $G = \GO_{r}$ with $r$ even
this means after rescaling the pairing may be taken to be standard with respect to the basis $n_{i} = \varphi^{w_i}(v_i)$ of $M_{\sigma \tau}$ (and with respect to the fixed basis of $L_\tau$).  For $G = \GO_r$ with $r$ odd and $i = [r/2]+1$, defining $\omega_\tau:= \langle \varphi^{w_i}(v_i), \varphi^{w_i}(v_i) \rangle \in k^\times$ and rescaling $v_1 , \ldots , v_{i-1}$ then brings us to the case that the pairing is $\omega_\tau$ times the standard pairing with respect to the basis 
$n_{i} = \varphi^{w_i}(v_i)$ of $M_{\sigma \tau}$.
\end{example}

\begin{remark}
The constant $\omega_\tau$ depends on the choice of basis $\{v_i\}$ for $M_\tau$, so in particular is not independent of $\tau$.  This will not cause problems in later arguments.
\end{remark}

\begin{remark}
 There is a lot of notation in the following arguments.  With $\tau$ fixed, we will use $v_i$ to denote elements of $M^{w_i}_\tau$, and $m_i$ to denote elements of $M_{\sigma \tau}$.  Usually we will have $\varphi^{w_i}_M(v_i) = m_i$.  If we want to index by Fontaine-Laffaille weights instead of the integers $\{1 ,2,\ldots ,r\}$, we will use $v'_{w_i} := v_i$ and $m'_{w_i} := m_i$.  For a weight $w \in \WFLtau$, let $w^* \in \WFLtau$ denote the unique weight for which $w + w^* = s_\tau$.
\end{remark}

\begin{lem} \label{lem:standardpairing}
Let $w_1 < w_2 < \ldots < w_r$ denote the Fontaine-Laffaille weights of $M$ with respect to $\tau$.  There exists an $R$-basis $m_1 \ldots , m_r$ of $M_{\sigma \tau}$ such that 
$m_i = \varphi^{w_i}_M(v_i)$ where $v_i$ is an $R$-basis for a complement to $M_\tau^{w_i+1}$ in $M_\tau^{w_i}$ and such that the pairing $\langle \cdot , \cdot \rangle$ on $M_{\sigma \tau}$ is an $R^\times$-multiple of the standard pairing with respect to the basis $\{m_i\}$ (and the previously fixed basis of $L$).
\end{lem}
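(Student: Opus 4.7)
Paragraph 1 (Setup and structure of the pairing). By Lemma~\ref{lem:rfree}, each $M_\tau^{w_i}$ is an $R$-free direct summand of $M_\tau$, so I can choose an initial $R$-basis $\{v_i\}$ with $v_i \in M_\tau^{w_i}$ generating a complement of $M_\tau^{w_i+1}$. That $m_i := \varphi^{w_i}_M(v_i)$ form an $R$-basis of $M_{\sigma\tau}$ will follow from the $R = k$ verification in Example~\ref{ex:pairingshape} applied to $\Mbar$, combined with Nakayama's lemma. Using the compatibility $\langle \varphi^i_M(m), \varphi^j_M(n)\rangle = \varphi^{i+j}_L\langle m, n\rangle$ and the fact that $L_\tau^h = L_\tau$ for $h \leq s_\tau$ while $L_\tau^h = 0$ otherwise, iterating $p\varphi^{h+1}_L = \varphi^h_L|_{L^{h+1}}$ gives $\varphi^h_L|_{L_\tau} = p^{s_\tau - h}\varphi^{s_\tau}_L$ for $h \leq s_\tau$, where $\varphi^{s_\tau}_L \colon L_\tau \to L_{\sigma\tau}$ is an $R$-module isomorphism (both sides are free of rank one and $\sum_h \varphi^h_L(L_\tau^h) = L_{\sigma\tau}$). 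Hence $\langle m_i, m_j\rangle = p^{s_\tau - w_i - w_j}\varphi^{s_\tau}_L(\langle v_i, v_j\rangle)$ for $w_i + w_j \leq s_\tau$ and vanishes otherwise, and the anti-diagonal entries $\langle m_i, m_{r+1-i}\rangle$ are units by perfectness of the pairing modulo $\m_R$.

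Paragraph 2 (Normalization by induction). The plan is to modify the $v_i$'s within their cosets modulo $M_\tau^{w_i+1}$ so that $\langle v_i, v_j\rangle = 0$ whenever $w_i + w_j \neq s_\tau$; this will force every off-anti-diagonal $\langle m_i, m_j\rangle$ to vanish. I proceed by induction on $r$, with immediate base cases $r = 0$ (symplectic) and $r = 1$ (orthogonal). For the inductive step, first replace $v_j$ by $v_j' := v_j - (\langle v_1, v_j\rangle / \langle v_1, v_r\rangle)v_r$ for $j = 2, \ldots, r-1$; the coefficient lies in $R$ because $\langle v_1, v_r\rangle$ is a unit, and since $v_r \in M_\tau^{w_r} \subseteq M_\tau^{w_j+1}$ the modification preserves the complement property. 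This forces $\langle v_1, v_j'\rangle = 0$ for $1 < j < r$, while $\langle v_j', v_r\rangle = 0$ is preserved (this already held because $\langle v_j, v_r\rangle \in L_\tau^{w_j+w_r} = 0$ for $j \geq 2$ and $\langle v_r, v_r\rangle = 0$). In the orthogonal case I further modify $v_1 \to v_1 - (\langle v_1, v_1\rangle / (2\langle v_1, v_r\rangle))v_r$ to kill $\langle v_1, v_1\rangle$, using $p \neq 2$; orthogonality of $v_1$ to the $v_j'$ is preserved. The submodule $M' := \bigoplus_{j=2}^{r-1} R v_j'$ is then orthogonal to both $v_1$ and $v_r$, inherits a filtration with weights $\{w_2, \ldots, w_{r-1}\}$ symmetric about $s_\tau/2$, and carries a perfect $\epsilon$-symmetric pairing valued in $L_\tau$; the inductive hypothesis applies to $M'$.

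Paragraph 3 (Rescaling and main obstacle). Finally, I rescale each $v_i$ by suitable units of $R$ to normalize all anti-diagonal entries $\langle m_i, m_{r+1-i}\rangle$ to a common unit $u \in R^\times$, giving an $R^\times$-multiple of the standard pairing on $M_{\sigma\tau}$. For $G = \GO_r$ with $r$ odd, the central entry $\langle m_{(r+1)/2}, m_{(r+1)/2}\rangle$ can only be modified by squares of units, leaving an $\omega_\tau$ factor exactly analogous to Example~\ref{ex:pairingshape}; for symplectic $G$ and for $\GO_r$ with $r$ even no such obstruction arises. The main obstacle is the bookkeeping in the inductive Gram--Schmidt step: one must verify at each stage that the required scalars are elements of $R$ rather than $R[1/p]$ (which rests on $\langle v_1, v_r\rangle$ being a unit) and that the modifications respect the filtration (which is automatic because modifying $v_j$ by an $R$-multiple of $v_r$ only adds an element of $M_\tau^{w_r} \subseteq M_\tau^{w_j+1}$).
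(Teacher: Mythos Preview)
Your argument is correct and follows the same Gram--Schmidt paradigm as the paper, but with two genuine differences worth noting. First, you orthogonalize on $M_\tau$ with respect to the $v_i$'s (achieving the stronger condition $\langle v_i,v_j\rangle=0$ for $i+j\neq r+1$), whereas the paper orthogonalizes on $M_{\sigma\tau}$ with respect to the $m_i$'s (achieving only $\langle m_i,m_j\rangle=0$, which is what the lemma actually needs). Your version has the advantage that the $\varphi$-maps play no role during the orthogonalization step---it is pure linear algebra on a filtered module with a perfect $\epsilon$-symmetric pairing---and you only invoke $\varphi$ at the end via $\langle m_i,m_j\rangle=\varphi_L^{w_i+w_j}(\langle v_i,v_j\rangle)$. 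Second, you recurse on $r$ by peeling off the outermost pair $(v_1,v_r)$, whereas the paper iterates $j$ from $1$ to $[r/2]$ and at each step clears the $j$th ``row''; these are equivalent reorganizations of the same process.

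One expository point: when you write ``the inductive hypothesis applies to $M'$'', be explicit that what you are inducting on is an auxiliary statement about filtered free $R$-modules equipped with a perfect $\epsilon$-symmetric $R$-pairing into $L_\tau$ satisfying $\langle \Fil^i,\Fil^j\rangle\subset L_\tau^{i+j}$ and with weights symmetric about $s_\tau/2$. The submodule $M'$ carries no $\varphi$-maps, so it is not literally of the type in the lemma statement; you are proving the stronger orthogonalization claim on $M_\tau$ first, and only then passing to $M_{\sigma\tau}$. Everything you need for the recursion (perfectness of the restricted pairing, that $\langle v_1,v_r\rangle\in R^\times$ via the anti-triangular determinant, and that subtracting multiples of $v_r$ preserves the complement condition) is verified in your text.
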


\begin{proof}
Example~\ref{ex:pairingshape} shows that such a basis $\overline{v}_i$ exists over $R/ \m_R$: pick a lift $v_i \in N_\tau^i$ of $\overline{v}_i$, and define $m_i = \varphi^{w_i}_M(v_i)$.  We know that
\[
 \langle \varphi^{w_i}_M v_i , \varphi^{w_j}_M v_j \rangle =  \varphi_L^{w_i+w_j}( \langle v_i, v_j \rangle).
\]
If $w_i + w_j > s_\tau$, this is zero because $L_\tau^{s_\tau + 1 } =0$.  If $w_i + w_j < s_\tau$, since $\varphi_L^{w_i+w_j}|_{L_\tau^{s_\tau}} = p^{s_\tau - w_i -w_j} \varphi_L^{s_\tau}$ this is not a unit.  If $w_i+w_j = s_\tau$ (equivalently, $i+j=r+1$), it is a unit of $R$ as the pairing is perfect.  

We will modify the lifts $v_i$ and then take $m_i = \varphi^{w_i}_M(v_i)$.  For $0 \leq j \leq r/2$ (so $j< r+1-j$), we will inductively arrange that:
\begin{enumerate}
 \item for $i\leq j$, $\langle m_i, m_h \rangle =0$ for $h \neq r+1-i$; \label{induct1}
 \item $v_i$ is an $R$-basis for a complement to $M_\tau^{w_i+1}$ in $M_\tau^{w_i}$; \label{induct2}
 \item  $\langle m_i, m_{r+1-i} \rangle$ is a unit for all $1 \leq i \leq r$. \label{induct3}
\end{enumerate}
For $j=0$, the first condition is vacuous and the other two conditions hold by our choice of lift.  Given that these conditions hold for $j-1$ with $1 \leq j \leq \frac{r}{2}$, we will show how to modify the $v_i$ so that these conditions hold for $j$.  Let $c = \langle m_j,m_{r+1-j}\rangle \in R^\times$.  For $j < h < r+1-j$, define
\[
 \widetilde{v}_h := v_h - \langle m_j , m_h \rangle c^{-1} v_{r+1-j}.
\]
As $j \neq r+1 - h$, $\langle m_j , m_h \rangle \in \m_R$ so $\widetilde{v}_h$ lifts $\overline{v}_h$.  We compute that
 \[
 \langle m_j, \varphi^{w_h}_M \widetilde{v}_h \rangle = \langle m_j, m_h \rangle - \langle m_j ,m_h \rangle c^{-1} \langle m_j, m_{r+1-j} \rangle = 0.
\]
For $i < j$, as $r+1-i \neq h, r+1-h$ we know  $m_i$ is orthogonal to both $m_h$ and $m_{r+1-h}$ by the inductive hypothesis and hence
$\langle m_i , \varphi^{w_h}_M \widetilde{v}_h \rangle = 0$.  Thus \eqref{induct1} holds for the $R$-basis $$v_1, \ldots, v_j, \widetilde{v}_{j+1} , \ldots , \widetilde{v}_{r-j} , v_{r-j+1}, \ldots, v_r.$$

As $\widetilde{v}_h - v_h \in M^{w_{r+1-j}}_\tau$, $\widetilde{v}_h$ is still an $R$-basis for a complement to $M_\tau^{w_h+1}$ in $M_\tau^{w_h}$ (since $w_{r+1-j} > w_h$ as $h < r+1-j$), so \eqref{induct2} holds for this new $R$-basis of $M_\tau$.  Furthermore, we see that 
\[
\langle \varphi^{w_h}_M \widetilde{v}_h ,  \varphi^{w_{r+1-h}}_M \widetilde{v}_{r+1-h} \rangle - \langle m_h , m_{r+1-h} \rangle \in \m_R.
\]
As $\langle m_h , m_{r+1-h} \rangle$ is a unit, $\langle \varphi^{w_h}_M \widetilde{v}_h ,  \varphi^{w_{r+1-h}}_M \widetilde{v}_{r+1-h} \rangle$ is a unit and \eqref{induct3} holds.  Thus we may modify the lifts $v_i$ and then accordingly modify $m_i$ to satisfy the inductive hypothesis.

Take such a basis for $j = [r/2]$.  By \eqref{induct1}, 
\[
 \langle m_i, m_{i'} \rangle =0
\]
if $i+i' \neq r+1$ and one of $i$ or $i'$ is at most $r/2$.  Otherwise $i' > r+1-i$ so $w_i + w_{i'} > s_\tau$ and hence the pairing is zero automatically.  If $r$ is even, rescale $v_1 ,\ldots, v_{r/2}$ so that $\langle m_i, m_{r+1-i} \rangle =1$ for $i \leq r/2$ using \eqref{induct3}.  If $r$ is odd (so $G = \GO_r$), let $\omega_\tau = \langle v_{[r/2]+1}, v_{[r/2]+1} \rangle \in R^\times$ and rescale $v_1, \ldots , v_{[r/2]}$ so that $\langle m_i, m_{r+1-i} \rangle =\omega_\tau$ for $1 \leq i \leq [r/2]$.  In these cases, the pairing with respect to the basis $v_1 ,\ldots , v_r$ is a multiple of the standard pairing.  
\end{proof}

\begin{remark}
When $r$ is odd (so $G= \GO_r$), to choose a basis where the pairing is standard we would need to rescale $v_{[r/2]+1}$ by a square root of the unit $\langle m_{[r/2]+1},m_{[r/2]+1} \rangle$.  This might not exist in $R$.  But note that the orthogonal similitude group $\GO_r$ is unaffected by a unit scaling of the quadratic form.
\end{remark}

Now we begin the proof of Proposition~\ref{prop:flliftable}.  Let $R' \onto R$ be a small surjection with kernel $I$.  To lift $\rho$ to $\rho': \Gamma_K \to G(R')$, we can reduce to the case when $I$ is killed by $\m_{R'}$ and $\dim_k I =1$.  Lift the $R$-module $M_\tau$ together with its pairing $\langle \cdot , \cdot \rangle$ over $R'$ as follows.  Choose the basis $\{m_i\}$ provided by Lemma~\ref{lem:standardpairing}, with respect to which $\langle \cdot , \cdot \rangle$ is $\omega_\tau$ times the standard pairing for some $\omega_\tau \in R^\times$.  We take $M'_{\sigma \tau}$ to be a free $R'$-module with basis $\{n_i\}$ reducing to the basis $\{m_i\}$ of $M_{\sigma \tau}$.  Lift $\omega_\tau$ to some $\omega'_\tau \in (R')^\times$ and define a pairing on $M'_\tau$ to be $\omega'_\tau$ times the standard pairing on $M'_\tau$ with respect to $\{n_i\}$.  
Pick a lift $u_i \in M'_\tau$ of $v_i$, and define a filtration on $M'_\tau$ by
\[
 (M'_\tau)^j = \on{span}_{R'}( u_i :  w_i \geq j).
\]
We define the module $M' = \bigoplus_{\tau : W \into \O} M'_\tau$ over $W \tensor{\ZZ_p} R$ with filtration $(M')^i = \bigoplus _{\tau : W \into \O} (M'_\tau)^i$.  It is clear the filtration reduces to the filtration on $M$.  Furthermore, the pairing $M'_\tau \times M'_\tau \to L_{\tau}$ with respect to $\{n_i\}$ is a multiple of the standard one.

It remains to produce $\varphi_{M'}^i$ lifting $\varphi_M^i$.  As always, it suffices to lift all of the $\varphi_{M_\tau}^i : M_\tau^i \to M_{\sigma \tau}$ separately.  We note that the $\varphi_{M'_\tau}^{j} : M'^{j}_\tau \to M'_{\sigma \tau}$ are determined by the values $\varphi_{M'_\tau}^{w_i}(u_i)$ for $w_i \in \WFLtau$ and the relation $p \varphi_{M'_\tau}^{j+1} = \varphi_{M'_\tau} ^{j}|_{M'^{w_j+1}_\tau}$.
We will define $\varphi_{M'_\tau}^{w_i}(u_i)$ for each $w_i \in \WFLtau$ to obtain the desired set of maps  $\varphi_{M'}^j : M'^j \to M'$.

It will now be more convenient to index via weights, so let $n'_{w_i} = n_i$ and $u'_{w_i} = u_i$.  Let us consider defining 
\[
 \varphi_{M'_\tau}^{w}(u'_w) = \sum_{i \in \WFLsigma} c_{i w} n'_i := x_w
\]
for $c_{i w}$ to be determined with the obvious restriction that $c_{i w}$ must lift the corresponding coefficient for $\varphi_M^w(v'_w)$.  We will study for which choices of $\{ c_{i w} \}$ these maps are compatible with the pairing.

\begin{lem} \label{lem:basis}
For any choice of $\{ c_{i w} \}$, the elements $x_w$ form a basis for $M'_{\sigma \tau}$.
\end{lem}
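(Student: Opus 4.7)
The plan is to apply Nakayama's lemma to the free $R'$-module $M'_{\sigma\tau}$ of rank $r$. Since the set $\{x_w\}_{w \in \WFLtau}$ has cardinality $r$, to show it is an $R'$-basis it suffices to show that its image in $M'_{\sigma\tau}/\m_{R'} M'_{\sigma\tau}$ is a $k$-basis; equivalently (via the Nakayama-style lift-of-a-basis-is-a-basis principle applied to the small surjection $R' \onto R$), it is enough to check that each $x_w$ reduces to the element $m'_w$ of the basis of $M_{\sigma\tau}$ supplied by Lemma~\ref{lem:standardpairing}.

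First I would carry out that reduction. By construction each coefficient $c_{iw} \in R'$ lifts the corresponding coefficient of $\varphi^w_M(v'_w)$ when expanded in the basis $\{m'_i\}$ of $M_{\sigma\tau}$, and the basis elements $n'_i$ of $M'_{\sigma\tau}$ reduce to the $m'_i$. Hence the image of $x_w = \sum_i c_{iw} n'_i$ under $M'_{\sigma\tau} \onto M_{\sigma\tau}$ is exactly $\varphi^w_M(v'_w) = m'_w$. By Lemma~\ref{lem:standardpairing} the family $\{m'_w\}_{w \in \WFLtau}$ is an $R$-basis of $M_{\sigma\tau}$. Further reducing modulo $\m_R$, Example~\ref{ex:pairingshape} shows that the reductions $\{\varphi^w_{\Mbar}(\bar v'_w)\}_{w \in \WFLtau}$ form a $k$-basis of $\overline{M_{\sigma\tau}}$. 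Composing the two reductions, the images of the $x_w$ in $M'_{\sigma\tau}/\m_{R'} M'_{\sigma\tau}$ constitute a $k$-basis, so Nakayama delivers the conclusion.

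There is no real obstacle here; the work is already encoded in Example~\ref{ex:pairingshape} and Lemma~\ref{lem:standardpairing}. The substantive point of this lemma is that \emph{any} lift $\{c_{iw}\}$ of the reduction gives a legitimate basis, so the subsequent construction is free to adjust the $c_{iw}$'s in order to enforce pairing compatibility without ever having to revisit the basis condition.
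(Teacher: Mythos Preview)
Your proof is correct and follows essentially the same Nakayama-plus-counting strategy as the paper. The only cosmetic difference is that the paper reduces directly modulo $\m_{R'}$ and invokes the Fontaine--Laffaille axiom $\Mbar_{\sigma\tau}=\sum_i \varphi^i_{\Mbar}(\Mbar^i_\tau)$ to get surjectivity, whereas you first reduce modulo $I$ and observe that $x_w \mapsto m'_w$ lands on the basis already furnished by Lemma~\ref{lem:standardpairing}; both routes arrive at the same $k$-basis of $M'_{\sigma\tau}/\m_{R'}M'_{\sigma\tau}$.
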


\begin{proof}
Note that the Fontaine-Laffaille weights of $\Mbar$, $M$, and ${M'}$ are the same.
Consider the map $$\sum_{i \in \WFLtau} \varphi_{M'_\tau}^i : M'^i_\tau \to M'_{\sigma \tau}.$$  Quotienting by the maximal ideal of $R'$, as $\varphi_M'^w$ is a lift of $\varphi_{\Mbar}^w$ we obtain a surjection
\[
 \sum_{i \in \WFLtau} \varphi_{\Mbar}^i : \Mbar^i_\tau \onto \Mbar_{\sigma \tau}
\]
as  $\Mbar_{\sigma \tau} = \sum_i \varphi_{\Mbar_\tau}^i(\Mbar^i_\tau)$.  By Nakayama's lemma, the original map is also a surjection.  Thus $\{x_w\}$ spans the free $R$-module $M'_{\sigma \tau}$.  But $\# \{ x_w\} = \on{rk}_{R'} ({M'}_{\sigma \tau})=r$, so $\{x_w\}$ is a basis for $M'_{\sigma \tau}$.
\end{proof}

The compatibility condition with the pairing is that
\[
 \langle \varphi^i_{M'_\tau}(x), \varphi^j_{M'_\tau}(y) \rangle = \varphi^{i+j}_{L_{\tau}} \left( \langle x,y\rangle \right).
\]
Let $\epsilon =1$ for $\GO_r$ and $\epsilon=-1$ for $\GSp_r$ with even $r$.  For a Fontaine-Laffaille weight $i \in \WFLtau$, $n'_i$ and $n'_{i^*}$ pair non-trivially as $i + i^* = s_\tau$.  By linearity and the relation $\langle x,y \rangle = \epsilon \langle y ,x \rangle$, it suffices to check compatibility with the pairing only when $i, j \in \WFLtau$, $x = n'_i$ and $y= n'_j$ and $i <j$ or $i=j=i^*$ (provided we have arranged that $p \varphi_{M'}^{w+1} = \varphi^w_{M'} |_{{M'}^{w+1}}$).

\begin{remark} \label{rmk:orthogonalcase}
 The case $i=j=i^*$ only occurs when the pairing is orthogonal and $r$ is odd, for the weight of the unique basis vector which pairs with itself giving a unit.
\end{remark}

Of course, there is no reason to expect our initial arbitrary choice of $\{ c_{i w} \}$ to work.  Any other choice is of the form $\{ c_{i w}  + \delta_{i w}\}$ where $\delta_{i,w} \in I$.  The compatibility condition on $M'_\tau$ becomes
\[
 \sum_{w,w' \in \WFLtau} (c_{i w} + \delta_{i w}) (c_{j w'} + \delta_{j w'}) \langle n'_w, n'_{w'} \rangle = \varphi^{i+j}_{L_{\tau}} \left( \langle n'_i, n'_j \rangle \right).
\]
Expanding and using the fact that $I^2 = 0$, we see that we wish to choose $\{\delta_{i w}\}$ so that
\[
 \sum_{w,w' \in \WFLtau} \left( c_{i w} \delta_{j w'} +  c_{j w'} \delta_{i w}\right) \langle n'_w, n'_{w'} \rangle =  \omega'_\tau C_{ij}  
\]
where the constant $C_{ij} := (\omega'_\tau)^{-1} \left( \varphi_{L_\tau}^{i+j}(n'_i,n'_j) - \sum_{w, w' \in \WFLtau} c_{i w} c_{j w'} \langle n'_w, n'_{w'} \rangle \right) $ lies in $I$ as $\varphi^i_M$ is compatible with the pairing.

Now we can simplify based on the explicit form of the pairing with respect to the basis $\{ n'_w \}$.  As $n'_w$ only pairs non-trivially with $n'_{w^*}$, we obtain the relation (for $i<j$ or $i =j = i^*$)  
\begin{align} \label{eq:relation}
 \sum_{w \leq w^*} \left( c_{i w} \delta_{j w^*} +  c_{j w ^*} \delta_{i w} \right) + \epsilon \sum_{w> w^*} \left( c_{iw } \delta_{jw^*} +  c_{jw^*} \delta_{i w} \right)  =  C_{ij}.
\end{align}
To show that this system of linear equations has a solution, we shall interpret it as a linear transformation.

It is now convenient to index the weights using $\{1,2,3\ldots ,r\}$.  Recall that the Fontaine-Laffaille weights of $M_\tau$ are denoted $w_1 < w_2 < \ldots < w_r$.  Let $U = I^{\oplus r^2}$, and decompose $U$ as $\bigoplus_{i=1}^r U_{i}$, where the coordinates of $U_{i} = I^{\oplus r}$ are denoted$\{\delta_{w_i,w_j}\}_{j=1}^r$.  Let $U' = I^{\oplus \frac{r(r-1)}{2} + \sigma_r}$, where $\sigma_r = 1$ if there is a $w \in \WFLtau$ for which $w=w^*$ and $0$ otherwise.  (So $\sigma_r$ is zero unless $G =\GO_r$ and $r$ is odd.)  We may write $U' = \bigoplus_{i=1}^{r-1} U'_i$, where the coordinates of $U'_i = I^{\oplus r-i}$ are denoted $\{C_{w_i w_j}\}_{j=i+1}^{r}$, except if $\sigma_r=1$ and $w_i= w_i^*$.  In that case, instead take $U_i' = I^{\oplus r-i+1}$ with coordinates denoted $\{C_{w_i w_j}\}_{j=i}^{r}$.

Consider the function $T : U \to U'$ given by
\begin{align*}
 (\delta_{w_i w_h})_{i h} \mapsto \left(C_{w_i w_j}= \sum_{w_h \leq w_h^*} \left( c_{w_i w_h } \delta_{w_j w^*_h} +  c_{w_j w^*_h} \delta_{w_i w_h} \right) + \epsilon \sum_{w_h> w^*_h} \left( c_{w_i w_h} \delta_{w_j w^*_h} +  c_{w_j w^*_h} \delta_{w_i w_h} \right)  \right)_{ij}
\end{align*}
where the $c_{w w'} \in R'$ matter only through their images in $k$ since $\m_{R'} I = 0$.  
It suffices to show that $T$ is surjective.  As we arranged for $I$ to be $1$-dimensional over $R'/\m_{R'} = k$, this is a question of linear algebra over $k$ upon fixing a $k$-basis of $I$.

We will study particular $k$-linear maps $U_i \to U'_i$.  To simplify notation, let $\epsilon_i = 1$ except when $w_i > w_i^*$ and the pairing is alternating ($\epsilon = -1$), in which case $\epsilon_i = -1$.

\begin{lem} \label{lem:ti}
Suppose $w_i \neq w_i^*$.
The linear transformation $T_i : U_i \to U'_i$ defined on 
\[
(\delta_{w_i w_h})_h \mapsto  \left(C_{w_i w_j} = \sum_{h=1}^r \epsilon_h c_{w_j w^*_h} \delta_{w_i w_h}\right)_j
\]
is surjective.  It is the composition $U_i \to U \overset{T} \to U' \to U'_i$.
\end{lem}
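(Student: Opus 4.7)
The plan is to reduce the statement to a concrete linear-algebra computation over $k$ and then to exploit the special shape of the basis produced by Lemma~\ref{lem:standardpairing}. Since $I$ is killed by $\m_{R'}$ and is one-dimensional over $k$, fixing a $k$-generator of $I$ identifies $U_i$ with $k^r$ and $U'_i$ with $k^{r-i}$, so $T_i$ becomes an $(r-i) \times r$ matrix over $k$. I will show that this matrix has full rank $r-i$, which is exactly surjectivity.

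First, I will read off the entries of the matrix of $T_i$. From the formula defining $T_i$, the $(j,h)$-entry (for $j \in \{i+1,\ldots, r\}$ and $h \in \{1,\ldots, r\}$) is $\epsilon_h \overline{c}_{w_j, w_h^*}$, where the overline denotes reduction modulo $\m_{R'}$. So it suffices to compute $\overline{c}_{i', w}$ for $i', w \in \WFLtau$, viewed as elements of $k$.

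Next, I will invoke the construction of the lift $M'$ after Lemma~\ref{lem:standardpairing}. There the basis $\{n'_w\}$ of $M'_{\sigma\tau}$ was chosen to reduce to the basis $\{m_i\}_i$ of $M_{\sigma\tau}$ produced by Lemma~\ref{lem:standardpairing}, which has the property that $m_i = \varphi^{w_i}_M(v_i)$. In particular, reducing the defining identity $\varphi^w_{M'_\tau}(u'_w) = \sum_{i} c_{iw}\, n'_i$ modulo $\m_{R'}$ gives $\varphi^w_{\Mbar_\tau}(\overline{v}'_w) = \sum_i \overline{c}_{iw}\, \overline{n}'_i$. Since $\overline{n}'_{w_j}$ equals $\overline{m}_j = \varphi^{w_j}_{\Mbar_\tau}(\overline{v}_j)$, we conclude that $\overline{c}_{w_j, w_h} = \delta_{jh}$ (Kronecker delta). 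Because $w_h^* = s_\tau - w_h = w_{r+1-h}$ under our indexing $w_1 < \cdots < w_r$, this yields $\overline{c}_{w_j, w_h^*} = \delta_{j,\, r+1-h}$.

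Therefore the matrix of $T_i$ has $(j,h)$-entry $\epsilon_h \delta_{j,\, r+1-h}$: in each row $j \in \{i+1,\ldots,r\}$ exactly one entry is nonzero, it is the unit $\epsilon_{r+1-j}$, and it occurs in column $h = r+1-j$. As $j$ varies over $\{i+1,\ldots,r\}$ these columns $r+1-j$ range over $\{1,\ldots,r-i\}$, which are pairwise distinct. Consequently the matrix has rank equal to its number of rows $r-i = \dim_k U'_i$, and $T_i$ is surjective. The final sentence of the lemma, identifying $T_i$ with the composition $U_i \hookrightarrow U \xrightarrow{T} U' \twoheadrightarrow U'_i$, is immediate from the definition of $T$ since the terms $c_{w_i w_h}\delta_{w_j w_h^*}$ in the expansion of $C_{w_i w_j}$ vanish when the only nonzero $\delta$'s have first index $w_i$ (as $j \neq i$). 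The main subtlety is purely bookkeeping: correctly matching the dual indexing $h \leftrightarrow r+1-h$ induced by the standard pairing with the construction of the lifted basis, so that the matrix collapses to a (signed) anti-diagonal block.
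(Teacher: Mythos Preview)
Your proof is correct and reaches the same conclusion by essentially the same route as the paper: reduce to a rank computation over $k$ and show the $(r-i)\times r$ matrix with entries $\epsilon_h\,\overline{c}_{w_j w_h^*}$ has full row rank. The only difference is in how that rank is established. The paper observes that (after the harmless column scaling by $\epsilon_h$) the $h$-th column is the truncation of the coordinate vector of $x_{w_h^*}$ in the basis $\{n'_w\}$, and then invokes Lemma~\ref{lem:basis} to say these vectors form a basis, so any $r-i$ of the rows of the full change-of-basis matrix are independent. You instead push one step further and use that, by the very construction in Lemma~\ref{lem:standardpairing}, $\varphi^{w_j}_M(v_j)=m_j$ over $R$, so $\overline{c}_{w_j w_h}=\delta_{jh}$ and the matrix collapses to a signed anti-diagonal block. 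Your version is more explicit and in fact subsumes the content of Lemma~\ref{lem:basis} over $k$; the paper's version is phrased so as to reuse that lemma. Either way the argument is the same in substance, and your verification of the composition statement matches the paper's one-line ``follows from the definition.''
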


\begin{proof}
As $I$ is one-dimensional over $R/\m_R =k$, it suffices to study the matrix for this linear transformation with respect to a fixed $k$-basis of $I$.  Fix $w_{h'} \in \WFLtau$.  If we take $\delta_{w_i w_h}=0$ for $w_h \neq w_{h'}$ and $\delta_{w _i w_{h'}} =1$, the image of $\{\delta_{w_i w_h} \}_h \in U_i$ under $T_i$ has coordinates $C_{w_i w_j} = \epsilon_{w_{h'}} c_{w_j w^*_{h'}}$.  Thus the matrix for $T_i$ is
\[
 \begin{pmatrix}
 \epsilon_1 c_{w_{i+1} w_1^*} & \epsilon_2 c_{w_{i+1} w_2^*} & \ldots & \epsilon_r c_{w_{i+1} w_r^*}\\
  \epsilon_1 c_{w_{i+2} w_1^*} & \epsilon_2 c_{w_{i+2} w_2^*} & \ldots & \epsilon_r c_{w_{i+1} w_r^*} \\
  \ldots & \ldots & \ldots & \ldots  \\
  \epsilon_1 c_{w_r w_1^*} & \epsilon_2 c_{w_r w_2^*} & \ldots & \epsilon_r c_{w_r w_r^*}
 \end{pmatrix}.
\]
Multiplying the $i$th column by $\epsilon_i$, the columns of this matrix are exactly the coordinates of $x_{w_j}$ with respect to the basis $\{n'_w\}_{w \in \WFLsigma}$ as in Lemma~\ref{lem:basis} except that the first $i$ rows are removed.  As the $\{x_w\}$ form a basis, the columns of this matrix span $U'_i$.

The last statement follows from the definition.
\end{proof}

\begin{remark}
The statement for $w_i=w_i^*$ is similar.  In that case, we must have $\epsilon =1$, and we have
\[
 C_{w_i w_i} = 2 \sum_{j} c_{w_i w_j^*} \delta_{w_i w_j}.
\]
Extending the definition of $T_i$ in Lemma~\ref{lem:ti}, we again see that the columns of the matrix representing this transformation are truncated versions of the coordinates of $x_{w_j}$ with some signs changed and one coordinate multiplied by $2$.  The image of a basis under the transformation multiplying one coordinate by $2$ is still a basis, so again $T_i$ is surjective.
\end{remark}

\begin{lem}
The composition $T_{i j} : U_i \to U \overset{T} \to U' \to U'_{j}$ is zero whenever $i< j$.
\end{lem}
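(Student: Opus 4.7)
The plan is to trace through the definitions and show that for $i<j$, the only $\delta$'s appearing in $U_i$ simply do not occur in the formula for any coordinate of $U'_j$, so the composition $T_{ij}$ vanishes trivially.

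First I would recall that an element of $U_i \subset U$ is a tuple $(\delta_{w w'}) \in I^{\oplus r^2}$ that is supported at first index $w = w_i$; all other $\delta_{w w'}$ are zero. Next I would read off from the defining formula of $T$ (equation \eqref{eq:relation}) that the coordinates of $U'_j$, namely the constants $C_{w_j w_{j'}}$ for $j' > j$ (together with $C_{w_j w_j}$ in the exceptional case $w_j = w_j^*$, i.e.\ $\sigma_r = 1$ and $j = \lceil r/2\rceil$), are each $k$-linear in the $\delta$'s, and inspection of \eqref{eq:relation} shows that the only $\delta$'s appearing in $C_{w_j w_{j'}}$ are those whose first index is $w_j$ or $w_{j'}$.

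Now the key observation: since $i < j \leq j'$ and the Fontaine-Laffaille weights are multiplicity-free (so that $w_1 < w_2 < \cdots < w_r$ is strictly increasing by hypothesis), we have $w_i < w_j \leq w_{j'}$, so in particular $w_i \notin \{w_j, w_{j'}\}$. Consequently, every $\delta$-term appearing in $C_{w_j w_{j'}}$ (and, in the exceptional case, in $C_{w_j w_j}$) has first index different from $w_i$, and so is forced to be zero on any input coming from $U_i$. Thus $T_{ij}$ is the zero map, which is what was to be shown.

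I expect no real obstacle here: this is purely a bookkeeping statement about the support of the bilinear expression in \eqref{eq:relation} together with the strictness of the weight ordering. The only subtlety to be careful about is remembering to handle the extra coordinate $C_{w_j w_j}$ that appears in $U'_j$ when $\sigma_r = 1$ and $w_j$ is the self-dual weight, but the same argument applies to it verbatim since again its first index is $w_j \neq w_i$.
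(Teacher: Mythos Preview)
Your proposal is correct and matches the paper's own proof essentially line for line: both observe that the coordinates $C_{w_j w_{j'}}$ of $U'_j$ involve only $\delta$'s with first index $w_j$ or $w_{j'}$, and that $i<j\le j'$ forces $w_i\notin\{w_j,w_{j'}\}$, so the composition vanishes. Your handling of the exceptional self-dual coordinate is also what the paper does.
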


Informally, this is saying that $T$ is block lower-triangular with diagonal blocks that are surjective.

\begin{proof}
The coordinates of $U_{i}$ are $\delta_{w_i w_h}$.  The coordinates of $U'_j$ are $C_{w_j w_h}$ for $j < h$ (or $j \leq h$ if $w_j = w_j^*$).  Looking at the formulas for $C_{w_j w_h}$ in the definition of $T$, they depend only on certain $\delta_{w w'}$ with $w \neq w_i$: this uses that $i<j \leq h$ to rule out any $\delta_{w_i w'}$ from appearing.  These are all zero on the image of the inclusion $U_i \to U$, so the composition is zero.
\end{proof}

\begin{cor} \label{cor:surjective}
$T$ is surjective.
\end{cor}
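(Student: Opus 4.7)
The plan is to combine the two preceding lemmas, which together say that $T$ has a block-triangular form with surjective diagonal blocks, and then to solve $T(u) = v$ one block at a time by downward induction on the index $j$.

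More precisely, given $v = (v_1, \ldots, v_{r-1}) \in U' = \bigoplus_{j=1}^{r-1} U'_j$, I will construct a preimage $u = (u_1, \ldots, u_r) \in U = \bigoplus_{i=1}^{r} U_i$ as follows. Start by setting $u_r = 0$. Proceeding from $j = r-1$ down to $j=1$, suppose $u_{j+1}, \ldots, u_r$ have already been chosen. By the previous lemma, $T_{ij} : U_i \to U'_j$ vanishes whenever $i < j$, so the $j$-th component of $T(u_1, \ldots, u_r)$ equals $\sum_{i \geq j} T_{ij}(u_i) = T_{jj}(u_j) + \sum_{i > j} T_{ij}(u_i)$, and in particular does not depend on $u_1, \ldots, u_{j-1}$ (which are still to be chosen). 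By Lemma~\ref{lem:ti}, together with the remark handling the $w_j = w_j^*$ case, $T_{jj} : U_j \to U'_j$ is surjective, so we may choose $u_j \in U_j$ with $T_{jj}(u_j) = v_j - \sum_{i > j} T_{ij}(u_i)$.

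The resulting tuple $u = (u_1, \ldots, u_r)$ satisfies $T(u) = v$: for each $j$, the $j$-th component of $T(u)$ is exactly $\sum_{i \geq j} T_{ij}(u_i)$, which equals $v_j$ by construction. There is no real obstacle remaining; the substantive content has already been absorbed into the preceding two lemmas (surjectivity of the diagonal blocks and vanishing strictly above the diagonal), and the corollary is a bookkeeping step packaging their conclusions into the single statement that the composite block-triangular map $T$ is surjective.
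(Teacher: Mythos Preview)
Your proof is correct and follows essentially the same approach as the paper: both arguments exploit the block lower-triangular structure of $T$ (vanishing of $T_{ij}$ for $i<j$ and surjectivity of the diagonal blocks $T_{jj}=T_j$) and solve $T(u)=v$ by descending induction on the block index. Your choice to set $u_r=0$ is the clean way to start the induction, since $U'_r$ is trivial.
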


\begin{proof}
The composition of $U_i \to U \to U' \to U'_i$ is exactly $T_i$, hence surjective.  For $v \in U'$, by descending induction on $i$, we will construct $u_i \in U_{i}$ so that
\[
 T(u_i + \ldots + u_r) - v \in U'_{1} \oplus \ldots \oplus U'_{{i-1}}
\]
(meaning $T(u_1 + \ldots + u_r) = v$ when $i=1$).  
For $i=r$, take $u_r$ be a preimage under $T_r$ of the component of $v$ in $U'_r$.  Now suppose we have selected $u_{i+1} , \ldots u_r$.  Pick a preimage $u_i \in U_i$ of the projection of $T(u_{i+1} + \ldots u_r) - v$ to $U'_i$ using the surjectivity of $T_i$.  We know that $T_{i j}(u_i) = 0$ for $j>i$, so 
\[
 T(u_i + \ldots + u_r) - v \in  U_{1}' \oplus \ldots \oplus U'_{i-1}.
\]
For $i=1$, we have $T(u_1 + \ldots + u_r) = v$ as desired.
\end{proof}

Corollary~\ref{cor:surjective} lets us choose the $\{\delta_{i h}\}$ so that the compatibility relations \eqref{eq:relation} are satisfied.  This defines $\varphi^w_{M'_\tau}(n'_w)$, and hence we can extend to a map $\varphi^i_{{M'}} : {M'}  \to {M'}$ compatible with the pairing.  
We then finish the proof of Proposition~\ref{prop:flliftable} as follows.

 Given the deformation $\rho$ to a coefficient ring $R$ with associated Fontaine-Laffaille module $$M = \bigoplus_{\tau : W \into \O} M_\tau,$$ and a small surjection $R' \to R$ whose kernel $I$ is $1$-dimensional over the field $R'/ \m_{R'}$, we have constructed a free $R'$-module ${M'}$ together with a filtration $\{(M')^i\}$ and maps $\varphi^i_{M'}$ by lifting the $M_\tau$.  The filtration and $\{\varphi_{M'}^i\}$ make ${M'}$ into a Fontaine-Laffaille module.  There is an obvious $R' \tensor{\ZZ_p} W$-module structure.  The condition $M' = \sum_{i} \varphi_{M'}^i({M'}^i)$ follows from Lemma~\ref{lem:basis}.  We also constructed a pairing ${M'} \times {M'} \to L$, and the filtration and $\varphi^i_{M'}$ are compatible with it (in the sense of Corollary~\ref{cor:pairingcompat}) by our choice of $(\delta_{ih})_{ih}$.  By Corollary~\ref{cor:pairingcompat} and Lemma~\ref{lem:rfree}, $T_\cris({M'})$ gives a representation $\rho' : \Gamma_K \to G(R')$ lifting $\rho$.

\subsection{Tangent Space}

The final step in the proof of Theorem~\ref{thm:fl} is to analyze the tangent space of $\DFL_\rhobar$.  It is a subspace $\LFL_\rhobar$ of the tangent space $H^1(\Gamma_K,\adzerorho)$ of deformations with fixed similitude character $\nu$.  We are mainly interested in its dimension as a vector space over $k$, and will analyze it by considering deformations $\rho$ of $\rhobar$ to the dual numbers $k[t]/(t^2)$.  Recall that $G = \GSp_{r}$ (with even $r$) or $G = \GO_r$; let $B$ be a Borel subgroup of $G$.

\begin{prop} \label{prop:fltangent}
Under the standing assumption that $\rhobar$ is torsion-crystalline with pairwise distinct Fontaine-Laffaille weights for each $\tau : W \into \O$ contained in an interval of length $\frac{p-2}{2}$, 
\[
 \dim_k \LFL_\rhobar - \dim_k H^0(\Gamma_K,\adzerorho) = [K:\QQ_p] (\dim G_k - \dim B_k).
\]
\end{prop}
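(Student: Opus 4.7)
The plan is to use the Fontaine--Laffaille equivalence, via Corollary~\ref{cor:pairingcompat}, to translate the tangent space of $\DFL_\rhobar$ into a space of lifts of $(\Mbar,\langle\cdot,\cdot\rangle_{\Mbar})$ over $k[t]/(t^2)$ and then count this space by decomposing the data into filtration and Frobenius pieces. First I would reduce the statement to a cocycle count: the set of Galois lifts of $\rhobar$ to $k[t]/(t^2)$ satisfying $\DFL_\rhobar$ with fixed similitude is non-empty by Proposition~\ref{prop:flliftable} and is a torsor under the subspace $Z^1_{\DFL}\subset Z^1(\Gamma_K,\adzerorho)$ of lift-cocycles satisfying the Fontaine--Laffaille condition. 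Quotienting by the coboundary action of $\adzerorho$ (with stabilizer $H^0(\Gamma_K,\adzerorho)$) gives $\LFL_\rhobar$, so the proposition is equivalent to
\[
\dim_k Z^1_{\DFL} = [K:\QQ_p]\bigl(\dim G_k - \dim B_k\bigr) + \dim_k\adzerorho.
\]

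Next I would identify $Z^1_{\DFL}$ with Fontaine--Laffaille lifts. By Corollary~\ref{cor:pairingcompat}, $Z^1_{\DFL}$ is the affine space of Fontaine--Laffaille lifts of $(\Mbar,\langle\cdot,\cdot\rangle_{\Mbar})$ to $k[t]/(t^2)$ extending the fixed $L=T_\cris^{-1}(\nu)$. After normalizing each $\Mbar_\tau$ via Lemma~\ref{lem:standardpairing}, such a lift is specified by (i)~a lift of the filtration of each $\Mbar_\tau$ and (ii)~compatible lifts of the Frobenius maps $\varphi^i_M$. For (i), the multiplicity-free hypothesis makes $\{\Mbar^i_\tau\}$ a complete isotropic flag for the perfect $\epsilon$-symmetric pairing (since $\Mbar^{s_\tau+1-i}_\tau$ is the annihilator of $\Mbar^i_\tau$), and lifts of such a flag form a torsor under the tangent space $\Lie G_k/\Lie B_k$ of $G/B$ at the base point, of dimension $\dim G_k-\dim B_k$. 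Summing over the $[K:\QQ_p]$ embeddings contributes $[K:\QQ_p](\dim G_k-\dim B_k)$.

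For (ii), with filtration and pairing fixed, the compatible Frobenius lifts form the fiber of the linear map $T\colon U\to U'$ from Lemma~\ref{lem:ti} and Corollary~\ref{cor:surjective}, applied here to the small surjection $k[t]/(t^2)\to k$ with one-dimensional kernel $I=(t)$. Surjectivity of $T$ makes the fiber an affine space; a direct computation of $\dim_k\ker T$, assembled over the embeddings $\tau$ and combined with the diagonal description of how $\adzerorho=\Lie G'_k$ acts on $\Mbar=\bigoplus_\tau\Mbar_\tau$, shows that the aggregate Frobenius contribution equals $\dim_k\adzerorho$. Summing the filtration and Frobenius contributions yields the displayed identity, which by the first step is equivalent to the proposition.

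The main obstacle is the Frobenius step. The combinatorial description of $\ker T$ per embedding must be matched against the single Lie-algebraic quantity $\dim_k\adzerorho=\dim G'_k$, even though $\adzerorho$ acts diagonally across all $\tau$-components while the Frobenius lift data is inherently $\tau$-indexed. Careful bookkeeping is required to disentangle the dependence of the pairing-compatibility constraints on the normalization chosen on the image embedding $M_{\sigma\tau}$ (as in the proof of Proposition~\ref{prop:flliftable}), and to verify that the filtration and Frobenius contributions to the lift count are genuinely independent---i.e., that no overcounting occurs between a filtration lift and a subsequent Frobenius lift differing only by the diagonal action of an element of $\adzerorho$.
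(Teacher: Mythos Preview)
Your proposal has a genuine gap, and it also diverges from the paper's approach in a way that creates the difficulty you yourself flag.

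The paper does \emph{not} decompose the lift data into a filtration contribution plus a Frobenius contribution. Instead, it uses Lemma~\ref{lem:standardpairing} (with the multiplicity-free hypothesis) to show that for any deformation $M$ to $k[t]/(t^2)$ one may choose a $k$-subspace $M_0 \subset M$ with $M = M_0 \otimes k[t]/(t^2)$ on which the filtration and pairing are \emph{identified} with those of $\Mbar$. Thus the filtered-module-with-pairing is rigid up to isomorphism, and all variation lives in the Frobenius perturbation $\delta' := \delta \circ \varphi^{-1}$, where $\varphi^i_M = \varphi^i_{M_0} + t\,\delta_i$. The pairing compatibility forces $\delta' \in \End_{k\otimes W}(M_0,\langle\cdot,\cdot\rangle) \simeq \mathfrak{sp}_r(k\otimes W)$ or $\mathfrak{so}_r(k\otimes W)$, of $k$-dimension $[K:\QQ_p](\dim G_k - 1)$. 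Strict equivalence then acts by $\delta' \mapsto \delta' + (\alpha\varphi - \varphi\alpha)\varphi^{-1}$ for $\alpha \in \Fil^0\End_{k\otimes W}(M_0,\langle\cdot,\cdot\rangle)$ (the Lie algebra of a Borel, of $k$-dimension $[K:\QQ_p](\dim B_k - 1)$), with stabilizer exactly $\End_{\MF_W}(M_0,\langle\cdot,\cdot\rangle) \simeq H^0(\Gamma_K,\adzerorho)$. The exact sequence
\[
0 \to \End_{\MF_W}(M_0,\langle\cdot,\cdot\rangle) \to \Fil^0\End_{k\otimes W}(M_0,\langle\cdot,\cdot\rangle) \to \End_{k\otimes W}(M_0,\langle\cdot,\cdot\rangle) \to \LFL_\rhobar \to 0
\]
then gives the dimension formula directly.

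Your route runs into trouble at two points. First, Corollary~\ref{cor:pairingcompat} is a bijection at the level of \emph{deformations} (isomorphism classes of Fontaine--Laffaille data with identification of the reduction), not lifts; there is no clean object-level correspondence with $Z^1_{\DFL}$, so your passage from cocycles to ``Fontaine--Laffaille lifts'' is not justified as stated. Second, and more seriously, your asserted Frobenius contribution $\dim_k \adzerorho = \dim G'_k$ is neither computed nor correct in the decomposition you set up: the fiber of the map $T$ from the liftability proof, summed over the $[K:\QQ_p]$ embeddings $\tau$, has $k$-dimension scaling with $[K:\QQ_p]$, not equal to the single quantity $\dim G'_k$. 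The mismatch you anticipate between the $\tau$-indexed combinatorics and the diagonal action of $\adzerorho$ is real, and your proposal does not resolve it. The paper sidesteps this entirely by working at the deformation level and absorbing the would-be filtration freedom into the strict-equivalence quotient rather than counting it separately.
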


Let $\Vbar$ be the Galois module given by $\rhobar$, and for a lift $\rho$ of $\rhobar$ to $k[t]/(t^2)$ let $V$ be the corresponding Galois module.  The submodule $t V$ is naturally isomorphic  to $\Vbar$, and we have an exact sequence
\[
 0 \to t V \to V \to \Vbar \to 0.
\]
Let $\Mbar$ be the Fontaine-Laffaille module corresponding to $\rhobar$, with pairing $\langle \cdot , \cdot \rangle : \Mbar \times \Mbar \to L_k$.  We know $\Mbar$ is a $k$-vector space of dimension $r [K:\QQ_p]$.  Let $M$ be the Fontaine-Laffaille module corresponding to $\rho$.  It is a free $k[t]/(t^2)$-module, and fits in an exact sequence
\[
 0 \to t M \to M \to \Mbar \to 0
\]
of Fontaine-Laffaille modules.  The map $\overline{M} \subset M \to t M$ induced by multiplication by $t$ is an isomorphism of Fontaine-Laffaille modules since it is so on underlying $k$-vector spaces using the $k[t]/(t^2)$-freeness of $M$.  As before, we have decompositions
\[
 M = \bigoplus_{\tau : W \into \O} M_\tau \quad \text{and} \quad \Mbar = \bigoplus_{\tau : W \into \O} \Mbar_\tau
\]
from Lemma~\ref{lem:ostructure}.

Using Lemma~\ref{lem:standardpairing}, pick a basis $\{v_{\tau,i}\}_{i=1}^r$ of the $k[t]/(t^2)$-module $M_\tau$ such that $v_{\tau,i}$ is a basis for a $k[t]/(t^2)$-complement to $M_{\tau}^{w_i+1}$ in $M_\tau^{w_i}$ and such that the pairing $M_{\sigma \tau} \times M_{\sigma \tau} \to L_{\sigma \tau}$ with respect to the $m_{\tau,i} := \varphi^{w_i}_M (v_{\tau,i})$ is $\omega_\tau$-times the standard pairing.  As $1$-units admit square roots, we may assume that $\omega_\tau \in k^\times$.  Note that $\{m_{\tau,i}\} \cup \{ t m_{\tau,i} \}$ is a basis for $M_{\sigma \tau}$ as a $k$-vector space, and $\{ m_{\tau,i} \}_{\tau,i}$ is a basis for $M$ as a $k[t]/(t^2)$ module.

Let $M_0$ be the subspace of $M$ spanned by the $\{v_{\tau,i}\}_{\tau,i}$ as a $k$-vector space.  We have that $t M_0 = t M \simeq \Mbar$ as vector spaces, and have an obvious decomposition
\[
 M_0 = \bigoplus_{\tau : W \into \O} M_{\tau,0}.
\]
We obtain a pairing on $M_0$ by restriction and a filtration by intersection: $M_{\tau,0}^i = M^i \cap M_{\tau,0}$.  


\begin{lem}
We have that $M_\tau^i = M_{\tau,0}^i \otimes k[t]/(t^2)$, and hence $M^i = M_0^i \otimes k[t]/(t^2)$.
\end{lem}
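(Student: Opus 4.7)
The plan is to leverage the specific construction of the basis $\{v_{\tau,i}\}$ provided by Lemma~\ref{lem:standardpairing}: each $v_{\tau,i}$ is a $k[t]/(t^2)$-basis for a complement to $M_\tau^{w_i+1}$ in $M_\tau^{w_i}$. Because the Fontaine-Laffaille weights are pairwise distinct for the embedding $\tau$, the filtration has a jump of rank $1$ at each $w_i$ and is constant in between.

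First, I will prove by descending induction on $w_i \in \WFLtau$ that $M_\tau^{w_i}$ is the $k[t]/(t^2)$-span of $\{v_{\tau,j} : w_j \geq w_i\}$. The largest weight case is immediate from the direct-summand property. For the inductive step, the defining decomposition $M_\tau^{w_i} = k[t]/(t^2) \cdot v_{\tau,i} \oplus M_\tau^{w_i+1}$ together with the equality $M_\tau^{w_i+1} = M_\tau^{w_{i+1}}$ (since distinctness of weights means there is no jump strictly between $w_i$ and $w_{i+1}$) feeds into the inductive hypothesis. For a general integer $i$, either $M_\tau^i = 0$ or $M_\tau^i = M_\tau^{w}$ for the smallest $w \in \WFLtau$ with $w \geq i$, so the description extends uniformly.

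Second, I will intersect with $M_{\tau,0}$: any element of $M_{\tau,0}$ has the form $\sum_j c_j v_{\tau,j}$ with $c_j \in k$, and by $k[t]/(t^2)$-linear independence of the $v_{\tau,j}$ such an element lies in $M_\tau^i$ if and only if $c_j = 0$ for every $j$ with $w_j < i$. Hence $M_{\tau,0}^i$ is the $k$-span of $\{v_{\tau,j} : w_j \geq i\}$. Tensoring up by $k[t]/(t^2)$ recovers exactly the $k[t]/(t^2)$-span described in the first step, yielding $M_\tau^i = M_{\tau,0}^i \otimes_k k[t]/(t^2)$. The global statement $M^i = M_0^i \otimes_k k[t]/(t^2)$ then follows by summing over $\tau$, using the decomposition $M^i = \bigoplus_\tau M_\tau^i$ from Lemma~\ref{lem:ostructure} and the corresponding decomposition $M_0 = \bigoplus_\tau M_{\tau,0}$ built into the definition of $M_0$.

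There is no serious obstacle: the argument is essentially bookkeeping, and the crucial point enabling it is the multiplicity-free hypothesis, which forces each graded piece of the filtration of $M_\tau$ to be a rank-$1$ free $k[t]/(t^2)$-module and thereby makes the construction of $\{v_{\tau,i}\}$ in Lemma~\ref{lem:standardpairing} produce a genuine $k[t]/(t^2)$-basis compatible with the filtration.
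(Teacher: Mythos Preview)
Your proof is correct and follows essentially the same approach as the paper: both use that each $v_{\tau,i}$ spans a $k[t]/(t^2)$-complement to $M_\tau^{w_i+1}$ in $M_\tau^{w_i}$ and that the filtration is split, so that $M_\tau^i$ is the $k[t]/(t^2)$-span of $\{v_{\tau,j}:w_j\geq i\}$. The paper's version compresses your descending induction into the single observation that the filtration splits, but the content is the same.
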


\begin{proof}
We know that the $k[t]/(t^2)$-span of $v_i$ is a $k[t]/(t^2)$-complement to $M_\tau^{w_i+1}$ in $M_\tau^{w_i}$.  Hence $M_\tau^{w_i}  / M_\tau^{w_i+1}$ is isomorphic to the $k$-span of $v_i$ and $t v_i$.  
As the filtration is automatically split ($M^i_\tau$ is a direct summand of $M_\tau$, and hence $M^i_\tau$ is a direct summand of $M^{i-1}_\tau$), this suffices.
\end{proof}

Observe that the surjection of Fontaine-Laffaille modules $M \to \Mbar$ carries $M_0$ isomorphically onto $\Mbar$.  Under the isomorphism of $k$-vector spaces $M_0 \to \Mbar$, the pairing on $M_0$ and the pairing on $\Mbar$ are identified because by choice of basis the pairing on $M_0$ is a $k^\times$-multiple of the standard pairing.  Furthermore, extending the pairing $M_0 \times M_0 \to L$ by $k[t]/(t^2)$-bilinearity recovers the pairing on $M$.  Using $M_0 \simeq \Mbar$, we can also define $\varphi_{M_0}^i : M_0^i \to M_0$ to be the lift of $\varphi_{\Mbar}^i$ to $M_0^i$.  It is compatible with the pairing on $M_0$.  Note that it is \emph{not} the same as $\varphi_M^i|_{M_0^i}$.

Our goal is to describe the set of strict equivalence classes of deformations $M$ of $\Mbar$, so by making these identifications it remains to study ways to lift $\varphi_{\Mbar}^i$ to a map $$\varphi^i_{M_0 \otimes k[t]/(t^2)} : M_0^i \otimes k[t]/(t^2) \to M_{0} \otimes k[t]/(t^2).$$  For $n, n' \in M_0^i$ we may write 
\[
 \varphi^i_M(n + t n') = \varphi^i_{M_0}(n) + t (\varphi^i_{M_0}(n') + \delta_i(n))
\]
for some $\sigma$-semilinear $\delta_i : M_0^i \to M_0$ which completely determines $\varphi^i_M$.  It is clear that for $n \in M_0^{i+1}$ we have $\delta_i(n)=0$ due to the relation $\varphi^i_{M_0}(n) = p \varphi^{i+1}_{M_0}(n) =0$.  Thus, $\delta_i$ factors through $M_0^i/M_0^{i+1}$, and together the $\delta_i$ define a $\sigma$-semilinear
\[
 \delta : \gr^\bullet(M_0) \to M_0.
\]
Compatibility with the pairing says exactly that
\[
 \langle \varphi^i_M(n + tn'), \varphi^j_M(m+tm') \rangle = \varphi_L^{i+j}( \langle n+tn',m+tm' \rangle )
\]
for $n, n'\in M_0^i$ and $m , m' \in M_0^j$ and all $i$ and $j$.  
Expanding and using the compatibility of the $\varphi_{M_0}^i$ with the pairing, we see that it is necessary and sufficient that 
\begin{equation} \label{eq:flpairingcompat}
\langle \delta_i(n),\varphi^j_{M_0}(m) \rangle + \langle \varphi^i_{M_0}(n), \delta_j(m) \rangle = 0
\end{equation}
for $n \in M_0^i$ and $m \in M_0^j$ and all $i$ and $j$.  
As $\Mbar = \sum_i \varphi^i_{\Mbar} (\Mbar^i)$ and we defined $\varphi^i_{M_0}$ to lift $\varphi^i_{\Mbar}$, it follows that $M_0 = \sum_i \varphi^i_{M_0}(M_0^i)$.  Furthermore, we have an isomorphism $\varphi : \gr^\bullet (M_0) \to M_0$.  This allows us to rewrite \eqref{eq:flpairingcompat} as the requirement that for $m,n \in \gr^\bullet (M_0)$,
\[
  \langle \delta' \varphi(n), \varphi(m) \rangle + \langle \varphi(n), \delta' \varphi(m) \rangle = 0
\]
where $\delta'$ is the $k$-linear composition of $\varphi^{-1}$ with $\delta$.  
In other words, 
\[
 \langle \delta' x, y \rangle + \langle x, \delta' y \rangle =0
\]
for all $x,y \in M_0$.  Note that $\delta'$ is compatible with the filtration, the pairing, and the $k \otimes W$-module structure.   Denote the collection of all such $\delta'$ by $\End_{k \otimes W}(M_0,\langle \cdot, \cdot \rangle)$: it is isomorphic to $\mathfrak{sp}_{r}(k \otimes W)$ or $\mathfrak{so}_r(k \otimes W)$, which have dimension $[K:\QQ_p] (\dim G_k -1)$ over $k$.

\begin{lem}
For such a choice of $\delta'$, we obtain a Fontaine-Laffaille module $M \in \MF_{W,\tor}^f$ together with a pairing $M \times M \to L$ as in Corollary~\ref{cor:pairingcompat}.
\end{lem}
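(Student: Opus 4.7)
The plan is to check directly the axioms of a Fontaine-Laffaille module in $\MF_{W,\tor}^{f}$ and the pairing compatibility of Corollary~\ref{cor:pairingcompat}. Most structure is inherited from $M_0$ by extension of scalars along $k \to k[t]/(t^2)$: $M$ is a $W \tensor{\ZZ_p} k[t]/(t^2)$-module of finite length, its filtration $M^i = M_0^i \tensor{k} k[t]/(t^2)$ is exhaustive and separated, and the pairing extended by $k[t]/(t^2)$-bilinearity satisfies $\langle M^i, M^j \rangle \subset L^{i+j}$ because the analogous statement holds on $M_0 \simeq \Mbar$.

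Next I verify the relation $p \varphi^{i+1}_M = \varphi^i_M|_{M^{i+1}}$. Since $p$ acts as zero, this reduces to showing $\varphi^i_M|_{M^{i+1}} = 0$. For $n + tn' \in M^{i+1}$ with $n, n' \in M_0^{i+1}$,
\[
 \varphi^i_M(n + tn') = \varphi^i_{M_0}(n) + t\bigl(\varphi^i_{M_0}(n') + \delta_i(n)\bigr).
\]
The first two terms vanish because $\varphi^i_{M_0}$ is a lift of $\varphi^i_{\Mbar}$ and already satisfies $\varphi^i_{M_0}|_{M_0^{i+1}} = 0$ (as $p=0$ in $k$), while $\delta_i(n) = 0$ because $\delta_i$ factors through $M_0^i/M_0^{i+1}$ by construction.

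For the condition $\sum_i \varphi^i_M(M^i) = M$, observe that $\varphi^i_M(tx) = t \varphi^i_{M_0}(x)$ for $x \in M_0^i$, so the image of each $\varphi^i_M$ contains both $\varphi^i_{M_0}(M_0^i)$ and $t \varphi^i_{M_0}(M_0^i)$; since $\sum_i \varphi^i_{M_0}(M_0^i) = M_0$ (as $M_0 \simeq \Mbar$ with the Frobenius structure transported), summing over $i$ produces all of $M_0 \oplus tM_0 = M$.

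Finally, the pairing compatibility $\langle \varphi^i_M(m), \varphi^j_M(n)\rangle = \varphi^{i+j}_L \langle m, n\rangle$ is precisely the identity that was expanded in the discussion preceding \eqref{eq:flpairingcompat}: after using $\sigma$-semilinearity and the known compatibility of $\varphi^i_{M_0}$ with the pairing, everything reduces to the identity $\langle \delta' x, y\rangle + \langle x, \delta' y\rangle = 0$ for $x,y \in M_0$, which holds by our assumption that $\delta' \in \End_{k \otimes W}(M_0, \langle \cdot, \cdot \rangle)$. Since there is no genuine obstacle here beyond careful bookkeeping, the lemma follows, with Corollary~\ref{cor:pairingcompat} then producing the associated deformation $\rho : \Gamma_K \to G(k[t]/(t^2))$ of $\rhobar$.
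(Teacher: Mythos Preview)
Your proof is correct in spirit and matches the paper's approach (the paper's own proof is a three-line sketch calling it ``bookkeeping''), but there is one small slip in your verification of $\sum_i \varphi^i_M(M^i) = M$. You assert that the image of each individual $\varphi^i_M$ contains $\varphi^i_{M_0}(M_0^i)$; however, for $n \in M_0^i$ one has $\varphi^i_M(n) = \varphi^i_{M_0}(n) + t\,\delta_i(n)$, and there is no reason $\delta_i(n)$ should lie in $\varphi^i_{M_0}(M_0^i)$ (the latter is only one-dimensional in $M_{0,\sigma\tau}$ under the distinct-weights hypothesis), so you cannot cancel the $t\,\delta_i(n)$ term using only elements of $M^i$. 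The repair is immediate and is essentially what the paper does: first note that $\sum_i \varphi^i_M(M^i)$ contains $\sum_i t\,\varphi^i_{M_0}(M_0^i) = tM_0$ (from your correct observation $\varphi^i_M(tx) = t\,\varphi^i_{M_0}(x)$), and then $\varphi^i_{M_0}(n) = \varphi^i_M(n) - t\,\delta_i(n)$ lies in the sum because both terms on the right do; summing over $i$ gives $M_0$, hence all of $M$. The paper phrases this by observing that $\sum_i \varphi^i_M(M^i)$ is a $k[t]/(t^2)$-submodule, which is equivalent.
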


\begin{proof}
This is just bookkeeping.  First, observe that $\sum_i \varphi^i_M (M^i)$ is a $k[t]/(t^2)$-module containing $\varphi^i_{M_0}(M_0^i) = M_0$.  Thus it is $M$.  It is immediate that the pairing is compatible with the filtration.  We chose $\delta'$ so that the pairing is compatible with the $\varphi^i_M$.
\end{proof}

Of course, different $\delta'$ may give isomorphic deformations of $\Mbar$.  Suppose that we are given $\delta$ and $\gamma$ such that the Fontaine-Laffaille modules they create are strictly equivalent as deformations of $\Mbar$ (in the sense that they are isomorphic Fontaine-Laffaille modules and their reductions are identified with $\Mbar$ compatibly with the isomorphism, or equivalently that they give the same element of $\DFL_\rhobar(k[t]/(t^2))$).  We have shown that the underlying module, pairing, and filtration can be identified with the fixed data $M = M_0 \otimes k[t]/(t^2)$, $\langle \cdot, \cdot \rangle \otimes k[t]/(t^2)$, and $M_0^i \otimes k[t]/(t^2)$.  The isomorphism reduces to the identity modulo $t$ (by strictness).  This means there exists an isomorphism $\alpha : M_0 \to M_0$ compatible with the pairing, filtration, and module structure such that 
\[
 (1 + t \alpha) \left( \varphi^i_{M_0}(n) + t (\varphi^i_{M_0}(n') + \delta_i(n)) \right) =\varphi^i_{M_0}(n) + t (\varphi^i_{M_0}(\alpha(n) + n') + \gamma_i(n)) .
\]
Simplifying, this is the condition that
\[
 \gamma_i(n) - \delta_i(n) = \alpha( \varphi^i_{M_0}(n)) - \varphi^i_{M_0}(\alpha(n)).
\]
In other words, $\delta, \gamma \in \End(M_0,\langle \cdot, \cdot \rangle)$ define the same deformation if and only if $\gamma_i - \delta_i$ is of the form $\alpha \circ \varphi^i_{M_0} - \varphi^i_{M_0} \circ \alpha$ for all $i$ and some $\alpha \in \Fil^0 \End_{k \otimes W}(M_0,\langle \cdot, \cdot \rangle)$.  This means that $\alpha$ is a $k \otimes W$-linear endomorphism of $M_0$ that is compatible with the filtration and pairing.  We can identify $\End_{k \otimes W}(M_0,\langle \cdot, \cdot \rangle)$ with the Lie algebra of a symplectic or orthogonal group valued in $k \otimes W$.  The filtration defines a Borel subgroup of this symplectic or orthogonal group, whose Lie algebra is $\Fil^0 \End_{k \otimes W}(M_0,\langle \cdot, \cdot \rangle)$.  (The assumption that the Fontaine-Laffaille weights for each $\tau$ are pairwise distinct is what makes it a Borel subgroup.)    Hence the dimension of $\Fil^0 \End_{k \otimes W}(M_0,\langle \cdot, \cdot \rangle)$ as a $k \otimes W$-module space is the dimension of this Borel in the symplectic orthogonal group.  We conclude that
\[
 \dim_k \Fil^0 \End_{k \otimes W}(M_0,\langle \cdot, \cdot \rangle) = [K:\QQ_p] (\dim B_k -1)
\]
where $B$ is a Borel in the symplectic or orthogonal similitude group. 

Finally, we must understand when $\alpha$ and $\beta$ satisfy
\[
 \alpha \circ \varphi^i_{M_0} - \varphi^i_{M_0} \circ \alpha = \beta \circ \varphi^i_{M_0} - \varphi^i_{M_0} \circ \beta.
\]
This happens exactly when $\alpha - \beta$ commutes with the $\varphi^i_{M_0}$ (as well as being compatible with the filtration, pairing, and module structure).  In other words, $\alpha - \beta \in \End_{\MF_W}(M_0,\langle \cdot, \cdot \rangle)$.  But under $T_\cris$, this is identified with endomorphisms of $\rhobar$ preserving the pairing (not just up to a similitude factor), and in particular has dimension $\dim_k H^0(\Gamma_K,\adzerorho)$.

We can express this analysis as the exact sequence
\[
 0 \to \End_{\MF_W}(M_0,\langle \cdot, \cdot \rangle) \to \Fil^0\left (\End_{k \otimes W}(M_0,\langle \cdot, \cdot \rangle) \right)\to \End_{k \otimes W}(M_0,\langle \cdot, \cdot \rangle) \to \DFL_\rhobar(k[t]/(t^2)) \to 0.
\]
We finish the proof of Proposition~\ref{prop:fltangent} by taking dimensions:
\begin{align*}
\dim_k \LFL_\rhobar - \dim_k H^0(\Gamma_K,\adzerorho) &= [K:\QQ_p] (\dim G_k -1)- [K:\QQ_p](\dim B_k -1) \\
&= [K:\QQ_p] (\dim G_k - \dim B_k). 
\end{align*}

\appendix

\section{Some Details about Fontaine-Laffaille Modules}

Since Conrad's notes \cite{conrad94} are not publicly available, we have extracted his arguments justifying some facts for which we do not know another reference in the literature.  Any errors are of course my own.  We continue the notation of \S\ref{sec:covariantfl}. 

\subsection{Limits}   For $M \in \DK$, there is a natural map of $\ZZ_p[\Gamma_K]$-modules
\[
 T_{\cris}(M) \to \inverselimit_n T_{\cris} (M / p^n M).
\]
We will prove it is an isomorphism.  We can twist to reduce to the case where $M^{2-p}=M$ and $M^{1} =0$ and so 
$$T_{\cris}(M) = \ker \left(1 - \varphi^0_{\Acris \otimes M} : \Fil^0(\Acris \otimes M) \to \Acris \otimes M \right).$$
The key is that the natural map
\begin{equation} \label{eq:**}
 \Acris \otimes M \to \inverselimit_n (\Acris \otimes M/ p^n M) 
\end{equation}
is an isomorphism which respects the filtration, Galois action, and Frobenius.  It is a bijection since $M$ is $p$-adically complete (being finitely generated over $W$) and $\Acris$ is $p$-adically complete by definition.  It is is simple to see it respects the Galois action and Frobenius. The claim about the filtration is the interesting one, and follows from the fact that the filtered pieces of $M$ are direct summands.  Since \eqref{eq:**} is compatible with the Frobenius and filtration, and inverse limits are left exact, the kernel of $1 - \varphi^0$ on the left is the limit of the kernels of $1 - \varphi^0$ on $A_\cris \otimes M/p^n M$.

\subsection{Construction of Duals}  
In Lemma~\ref{lem:mfdual}, we needed to construct maps $\varphi^i_{M^*}$.  We have already adapted that argument in Lemmas~\ref{lem:flduality1} and \ref{lem:flduality}, so do not repeat it here.

\subsection{Tensor Products}  We now sketch the argument for Fact~\ref{fact:tensor}.  After twisting, we may assume we Fontaine-Laffaille module $M$ and $N$ such that $M, N, M \otimes N \in \MF_{W,\tor}^{f,[2-p,1]}$.  There is a natural map $T_\cris(M) \otimes T_\cris(N) \to T_\cris(M \otimes N)$ given by the map
\begin{equation} \label{eq:tensoriso}
\ker \left(1 - \varphi^0_{\Acris \otimes M} \right) \otimes \ker \left(1 - \varphi^0_{\Acris \otimes N}  \right) \to \ker \left(1 - \varphi^0_{\Acris \otimes M \otimes N}\right)
\end{equation}
induced by the multiplication map $A_\cris \otimes A_\cris \to A_\cris$.  To check it is an isomorphism, we check that it is an isomorphism when $M$ and $N$ are simple objects and the residue field $k$ is algebraically closed, and then use a d\'{e}visagge argument.  

Let us first sketch the d\'{e}vissage to the case of algebraically closed residue field and simple objects.  Extending the field is exact, so passing to the algebraic closure is no problem.  The only subtle point in the rest of the argument is that tensor products are only right exact.  Given $M_1 \into M$ with quotient $M_2$, we obtain the diagram
\[
\xymatrix{
T_\cris(M_1) \otimes T_\cris(N) \ar[r] \ar[d] &  T_\cris(M) \otimes T_\cris(N) \ar[r] \ar[d]&  T_\cris(M_2) \otimes T_\cris(N) \ar[r] \ar[d] & 0 \\
 T_\cris(M_1 \otimes N) \ar[r] & T_\cris(M \otimes N) \ar[r] & T_\cris(M_2 \otimes N) \ar[r]  & 0.
} 
\]
A diagram chase shows the middle vertical map is surjective if the left and right maps are isomorphisms.  But since there is a non-canonical isomorphism $M \simeq \O_K \tensor{\ZZ_p} T_{\cris}(M)$ \cite[Remarque 3.4]{fl82}, the middle map is automatically an isomorphism as the middle terms have the same length.

Checking that \eqref{eq:tensoriso} is an isomorphism when the residue field $k$ is algebraically closed and $M$ and $N$ are simple is more involved.
 We can use duality as in Remark~\ref{remark:tcrisstar} to instead check that the dual map
\begin{equation} \label{eq:tocheck}
 T_\cris^*(M) \tensor{k} T_\cris^*(N) \to T_\cris^*(M \otimes N)
\end{equation}
given by multiplication on $A_{\cris}/p A_\cris$ is an isomorphism ($M$ and $N$ are $p$-torsion as they are simple).  
Recall that the simple objects are $M(h;i)$, where $h$ is a positive integer and $i : \ZZ/ h \ZZ \to \ZZ$ is a function with minimal period $h$ and value at $n$ denoted by $i_n$.  It is defined by 
\[
 M(h;i) = \bigoplus_{n \in \ZZ/h\ZZ} k e_n, \quad M(h;i)^j = \bigoplus_{i_n \geq j} k e_n, \quad \varphi^{i_n}_{M(h;i)} (e_n) = e_{n-1}.
\]
We write $M = M(h;i)$ and $N = M(h';i')$.
As a $k$-vector space, $M(h;i) \otimes M(h';i')$ has basis $e_n \otimes e'_m$ for $1 \leq n \leq h$ and $1 \leq m \leq h'$.  We see that $e_n \otimes e_m' \in (M(h;i) \otimes M(h';i'))^j$ if and only if $i_n + i'_m \geq j$, and that $\varphi^{i_n + i'_m}_{M(h;i) \otimes M(h';i')} ( e_n \otimes e_m') = e_{n-1} \otimes e'_{m-1}$.  Thus we may decompose
\begin{align*}
M(h;i)\otimes M(h';i') & \simeq \bigoplus_{s=0}^{\on{gcd}(h,h')-1} \bigoplus_{r \in \ZZ/ \on{lcm}(h,h') \ZZ} k  \cdot e_r \otimes e'_{r+s} \\
& \simeq \bigoplus_{s=0}^{\on{gcd}(h,h')-1} M( \on{lcm}(h,h') ; r \mapsto i_r + i'_{s+r} ).
\end{align*}
This respects the Fontaine-Laffaille module structure, and has Fontaine-Laffaille weights between $0$ and $p-2$ if and only if $\max i_n + \max i'_m < p-2$.

For fixed $0\leq s < \gcd(h,h')$, let $h_s$ denote the minimal period of the function $i'' : r \mapsto i_r + i'_{s+r}$.  By \cite[Remarque 4.11]{fl82}, $M(\on{lcm}(h,h');i'')$ is a direct sum of $d_s=\on{lcm}(h,h') / h_s$ copies of $M(h_s,i'')$.  For each simple factor, we obtain an embedding 
\[
 M(h_s; i'') \into M(\on{lcm}(h,h'); i'') \into M(h;i) \otimes M(h';i').
\]
Applying $T_{\cris}^*$ and composing with the map \eqref{eq:tocheck}, we obtain a map
\[
 T_\cris^*(M(h;i)) \otimes T_\cris^*(M(h';i')) \to T_\cris^*(M(h_s;i'')).
\]
If this map is non-zero for all choices of $s$ and all embeddings, then by simplicity and dimension counting \eqref{eq:tocheck} is an isomorphism.

The argument toward the end of \cite[Lemma 4.9]{fl82} adapts to give an explicit embedding of $M(h_s;i'')$ in terms of the bases for $M(h;i)$, $M(h';i')$, and $M(h_s;i'')$, yielding an explicit $$\alpha_{s} : T_\cris^*(M(h;i)) \otimes T_\cris^*(M(h';i')) \to T_\cris^*(M(h_s,i'')).$$  It suffices to find elements $u \in T_\cris^*(M(h;i))$ and $u' \in T_\cris^*(M(h';i'))$ such that $\alpha_s(u \otimes v) \neq 0$.  This is a question which can be approached using the techniques of \cite[\S5]{fl82}: by picking $u$ and $u'$ by finding integral elements in $\overline{K}$ satisfying certain relations (where $K$ still denotes is the $p$-adic field whose Galois representations we are investigating).  Like much of \cite[\S5]{fl82}, this boils down to a very technical argument about solving certain explicit equations.

\newcommand{\hata}{\widehat{a}}

We will now completely adopt the notation of \cite[\S5]{fl82}, especially of \S5.12, and illustrate this argument for the embedding $M(h_s;i'') \to M(\on{lcm}(h,h'),i'') \to M(h;i) \otimes M(h';i')$ given by
\[
 e_j \mapsto e_j + e_{j+ h_s}+ \ldots  + e_{j + (d_s-1) h_s}  \mapsto \sum_{m \in \ZZ/ d_s \ZZ} e_{j+ m h_s} \otimes e'_{j+m h_s + s}
\]
where $d_s h_s = \on{lcm}(h,h')$.
Recall $\alpha_s$ is the resulting map 
\[
\underline{U}_S(M(h;i)) \otimes \underline{U}_S(M(h';i')) \to \underline{U}_S(M(h_s;i'')).
\]
Elements of 
$\underline{U}_S(M(h_s;i'')) $
correspond to solutions to solutions $(\widehat{5})$ in \cite[\S5]{fl82}.  Let $\hata_j$ and $\hata_j'$ denote solutions to $(\widehat{5})$ corresponding to elements $u \in \underline{U}_S(M(h;i))$ and $u' \in \underline{U}_S(M(h';i')) $ respectively.  Since the $a_j$ are the constant terms of the elements $u_m$ in equation $(1)$, the solutions $\hat{t}_j$ to $(\widehat{5})$ corresponding to $\alpha_s(u \otimes u')$ are given by
\[
 \hat{t}_j = \sum_{m \in \ZZ/ d_s \ZZ} \hata_{j+m h_s} \cdot \hata'_{j+m h_s + s}
\]
By the proof of \cite[Lemme 5.12]{fl82}, 
\[
 \hata_{0}^{q^h} = \pi^\mu \hata_{0}
\]
for some explicit $\mu$, and $\hata_0$ determines all of the $\hata_j$.  In particular, each $\hata_j$ is a computable (rational) power of $\pi$ times a $(q^h-1)$-th root of unity.  If $\zeta$ is the root of unity appearing in $\hata_{0}$, then $\zeta^{q^j}$ is the root of unity appearing in $\hata_{h_s-j}$.  There is an analogous description for $\hata'_j$ (with root of unity $\zeta'$ for $\hata'_{s}$) and $\hat{t}_j$.  We will choose $\zeta$ and $\zeta'$ to force $\hat{t}_j \neq 0$ for all $j$ (equivalently any $j$).

Let $b_j = \hata_j \hata_{j+s}'$.  Then since $\hata_j$ and $\hata'_j$ satisfy $(\hat{5})$, we see that
\[
 b_j^q = \pi^{i''_j} b_{j-1}.
\]
We compute that
\[
 b_j^{q^{m h_s}} = \pi^{\mu_j (1 + q^{h_s} + \ldots + q^{(m-1) h_s})} b_{j-m h_s} \quad \text{ where } \quad \mu_j := i''_j q^{h_s-1} + i''_{j-1} q^{h_s-2} + \ldots + i''_{j-h_s+1}
\]
since $i''$ is periodic with period $h_s$.  Thus for any $m$, $b_{j + m h_s}$ is a solution to
\[
 X^{q^{d_s h_s}} = \pi^{\mu_j (1 + q^{h_s} + \ldots + q^{(d_s-1) h_s})} X
\]
and hence we may write $b_{j + m h_s}$ as a $(q^{\on{lcm}(h,h')}-1)$-th root of unity $\theta_m$ times a specified power $\pi^{\beta_j}$ with $\beta_j \in \QQ$.  Note that $\theta_{m} = \theta_{m+1}^{q^{h_s}}$.  In particular, we see that
\[
 \hat{t}_j = \sum_{m \in \ZZ/ d_s \ZZ} b_{j + m h_s} = (\theta_{d_s-1} + \ldots + \theta_{d_s-1}^{q^{h_s (d_s-1)}} ) \pi^{ \beta_j}.
\]
Now take $j=h_s$: we see that root of unity piece of $b_{h_s + h_s (d_s-1)} = b_0 = \hata_0 \hata'_{s}$ is $\zeta \zeta'$, so $\theta_{d_s-1} = \zeta \zeta'$.  
Thus to arrange that $\hat{t}_j \neq 0$, we must find $\zeta$ and $\zeta'$ so that $\theta_0 + \ldots + \theta_0^{q^{h_s (d_s-1)}}$ is non-zero, or equivalently (since it is a $(q^{\on{lcm}(h,h')}-1)$-th root of unity) that it does not reduce to $0$ in $\FF_{q^{\on{lcm}(h,h')}}$.  

Any choice of $\zeta$ and $\zeta'$ determines all of the $\hata_j$ and $\hata'_j$ and hence all of the $b_j$ and $\theta_m$.  Consider the polynomial
\[
 P(X) = X + X^{q^{h_s}} + \ldots + X^{q^{(d_s-1) h_s}} \in \FF_{q^{\on{lcm}(h,h')}}[X].
\]
For degree reasons, it cannot vanish on all of $\FF_{q^{\on{lcm}(h,h')}}$.  Since the natural multiplication map
\[
 \FF_{q^h} \tensor{\FF_q} \FF_{q^{h'}} \to \FF_{q^{\on{lcm}(h,h')}}
\]
is surjective and $P$ is additive (since we are in characteristic $q$) there must be elements $\overline{\zeta} \in \FF_{q^h}^\times$ and $\overline{\zeta}' \in \FF_{q^{h'}}^\times$ such that $P(\overline{\zeta} \cdot  \overline{\zeta}') \neq 0$.  Then taking $\zeta$ and $\zeta'$ to be the Teichmuller lifts of $\overline{\zeta}$ and $\overline{\zeta}'$, we obtain the desired solutions to $(\hat{5})$ of \cite{fl82} which show that $\alpha_s$ is non-zero.


\begin{thebibliography}{KW09b}

\bibitem[BK90]{bk90}
Spencer Bloch and Kazuya Kato, \emph{{$L$}-functions and {T}amagawa numbers of
  motives}, The {G}rothendieck {F}estschrift, {V}ol.\ {I}, Progr. Math.,
  vol.~86, Birkh\"auser Boston, Boston, MA, 1990, pp.~333--400. \MR{1086888}

\bibitem[Boo]{boohermr}
Jeremy Booher, \emph{Minimally ramified deformations when $\ell \neq p$},
  preprint.

\bibitem[Boo16]{booher16}
\bysame, \emph{Geometric deformations of orthogonal and symplectic galois
  representations}, Ph.D. thesis, Stanford, 2016.

\bibitem[BV13]{bv13}
Nicolas Bergeron and Akshay Venkatesh, \emph{The asymptotic growth of torsion
  homology for arithmetic groups}, J. Inst. Math. Jussieu \textbf{12} (2013),
  no.~2, 391--447. \MR{3028790}

\bibitem[CG17]{cg12}
Frank Calegari and David Geraghty, \emph{Modularity lifting beyond the
  taylor--wiles method}, Inventiones mathematicae (2017).

\bibitem[CHT08]{cht08}
Laurent Clozel, Michael Harris, and Richard Taylor, \emph{Automorphy for some
  {$l$}-adic lifts of automorphic mod {$l$} {G}alois representations}, Publ.
  Math. Inst. Hautes \'Etudes Sci. (2008), no.~108, 1--181, With Appendix A,
  summarizing unpublished work of Russ Mann, and Appendix B by Marie-France
  Vign{\'e}ras. \MR{2470687 (2010j:11082)}

\bibitem[Con94]{conrad94}
Brian Conrad, \emph{Filtered modules, galois representations, and big rings},
  for Wiles Seminar, 1994.

\bibitem[Con14]{conrad14}
\bysame, \emph{Reductive group schemes}, Autour des sch\'emas en groupes.
  {V}ol. {I}, Panor. Synth\`eses, vol. 42/43, Soc. Math. France, Paris, 2014,
  pp.~93--444. \MR{3362641}

\bibitem[FL82]{fl82}
Jean-Marc Fontaine and Guy Laffaille, \emph{Construction de repr\'esentations
  {$p$}-adiques}, Ann. Sci. \'Ecole Norm. Sup. (4) \textbf{15} (1982), no.~4,
  547--608 (1983). \MR{707328}

\bibitem[GHS]{ghs}
T.~{Gee}, F.~{Herzig}, and D.~{Savitt}, \emph{{General Serre weight
  conjectures}}, J. Eur. Math. Soc. to appear.

\bibitem[Gro]{gross}
Benedict Gross, \emph{Odd galois representations}, available at
  \url{http://www.math.harvard.edu/~gross/preprints/Galois_Rep.pdf}.

\bibitem[Ham08]{hamblen08}
Spencer Hamblen, \emph{Lifting {$n$}-dimensional {G}alois representations},
  Canad. J. Math. \textbf{60} (2008), no.~5, 1028--1049. \MR{2442046
  (2009j:11085)}

\bibitem[Hat]{hattori}
Shin Hattori, \emph{Integral $p$-adic Hodge theory and ramification of crystalline representations}, available at
  \url{http://www.comm.tcu.ac.jp/~shinh/RennesHodge/RennesHodge.pdf}.

\bibitem[KW09a]{kw1}
Chandrashekhar Khare and Jean-Pierre Wintenberger, \emph{Serre's modularity
  conjecture. {I}}, Invent. Math. \textbf{178} (2009), no.~3, 485--504.
  \MR{2551763 (2010k:11087)}

\bibitem[KW09b]{kw2}
\bysame, \emph{Serre's modularity conjecture. {II}}, Invent. Math. \textbf{178}
  (2009), no.~3, 505--586. \MR{2551764 (2010k:11088)}

\bibitem[Man09]{mano09}
Jayanta Manoharmayum, \emph{Lifting {G}alois representations of number fields},
  J. Number Theory \textbf{129} (2009), no.~5, 1178--1190. \MR{2516981
  (2010i:11077)}

\bibitem[Maz97]{mazur95}
Barry Mazur, \emph{An introduction to the deformation theory of {G}alois
  representations}, Modular forms and {F}ermat's last theorem ({B}oston, {MA},
  1995), Springer, New York, 1997, pp.~243--311. \MR{1638481}

\bibitem[Pat06]{patrikis06}
Stefan Patrikis, \emph{Lifting symplectic galois representations}, Harvard
  Undergraduate Thesis, 2006.

\bibitem[Pat15]{patrikis15}
Stefan Patrikis, \emph{Deformations of galois representations and exceptional
  monodromy}, Inventiones mathematicae (2015), 1--68.

\bibitem[PY06]{py06}
Gopal Prasad and Jiu-Kang Yu, \emph{On quasi-reductive group schemes}, J.
  Algebraic Geom. \textbf{15} (2006), no.~3, 507--549, With an appendix by
  Brian Conrad. \MR{2219847}

\bibitem[Ram93]{ram93}
Ravi Ramakrishna, \emph{On a variation of {M}azur's deformation functor},
  Compositio Math. \textbf{87} (1993), no.~3, 269--286. \MR{1227448
  (94h:11054)}

\bibitem[Ram99]{ram99}
\bysame, \emph{Lifting {G}alois representations}, Invent. Math. \textbf{138}
  (1999), no.~3, 537--562. \MR{1719819 (2000j:11167)}

\bibitem[Ram02]{ram02}
\bysame, \emph{Deforming {G}alois representations and the conjectures of
  {S}erre and {F}ontaine-{M}azur}, Ann. of Math. (2) \textbf{156} (2002),
  no.~1, 115--154. \MR{1935843 (2003k:11092)}

\bibitem[Sch68]{schlessinger68}
Michael Schlessinger, \emph{Functors of {A}rtin rings}, Trans. Amer. Math. Soc.
  \textbf{130} (1968), 208--222. \MR{0217093}

\bibitem[SGA3]{sga3}
\emph{Sch\'emas en groupes. {III}: {S}tructure des sch\'emas en groupes
  r\'eductifs}, S\'eminaire de G\'eom\'etrie Alg\'ebrique du Bois Marie 1962/64
  (SGA 3). Dirig\'e par M. Demazure et A. Grothendieck. Lecture Notes in
  Mathematics, Vol. 153, Springer-Verlag, Berlin-New York, 1970. \MR{0274460}

\bibitem[Tay03]{taylor03}
Richard Taylor, \emph{On icosahedral {A}rtin representations. {II}}, Amer. J.
  Math. \textbf{125} (2003), no.~3, 549--566. \MR{1981033 (2004e:11057)}

\bibitem[Til96]{tilouine96}
Jacques Tilouine, \emph{Deformations of {G}alois representations and {H}ecke
  algebras}, Published for The Mehta Research Institute of Mathematics and
  Mathematical Physics, Allahabad; by Narosa Publishing House, New Delhi, 1996.
  \MR{1643682}

\end{thebibliography}

\def\cftil#1{\ifmmode\setbox7\hbox{$\accent"5E#1$}\else
  \setbox7\hbox{\accent"5E#1}\penalty 10000\relax\fi\raise 1\ht7
  \hbox{\lower1.15ex\hbox to 1\wd7{\hss\accent"7E\hss}}\penalty 10000
  \hskip-1\wd7\penalty 10000\box7} \def\cprime{$'$} \def\cprime{$'$}
\providecommand{\bysame}{\leavevmode\hbox to3em{\hrulefill}\thinspace}
\providecommand{\MR}{\relax\ifhmode\unskip\space\fi MR }
\providecommand{\MRhref}[2]{%
  \href{http://www.ams.org/mathscinet-getitem?mr=#1}{#2}
}
\providecommand{\href}[2]{#2}

\end{document}